\begin{document}

\setlength{\parskip}{0.3\baselineskip}

\newtheorem{theorem}{Theorem}
\newtheorem{corollary}[theorem]{Corollary}
\newtheorem{lem}[theorem]{Lemma}
\newtheorem{proposition}[theorem]{Proposition}
\newtheorem{defn}[theorem]{Definition}
\newtheorem{rem}[theorem]{Remark}
\renewcommand{\thefootnote}{\alph{footnote}}
\newenvironment{proof}{\smallskip \noindent{\bf Proof}: }{\hfill $\Box$\hspace{1in} \medskip \\ }


\newcommand{\beqaa}{\begin{eqnarray}}
\newcommand{\eeqaa}{\end{eqnarray}}
\newcommand{\beqae}{\begin{eqnarray*}}
\newcommand{\eeqae}{\end{eqnarray*}}


\newcommand{\sii}{\Leftrightarrow}
\newcommand{\imer}{\hookrightarrow}
\newcommand{\imerc}{\stackrel{c}{\hookrightarrow}}
\newcommand{\Con}{\longrightarrow}
\newcommand{\con}{\rightarrow}
\newcommand{\conf}{\rightharpoonup}
\newcommand{\confe}{\stackrel{*}{\rightharpoonup}}
\newcommand{\pbrack}[1]{\left( {#1} \right)}
\newcommand{\sbrack}[1]{\left[ {#1} \right]}
\newcommand{\key}[1]{\left\{ {#1} \right\}}
\newcommand{\dual}[2]{\langle{#1},{#2}\rangle}


\newcommand{\R}{{\mathbb R}}
\newcommand{\N}{{\mathbb N}}
\newcommand{\cred}[1]{\textcolor{red}{#1}}

\title{\bf Regularity  of Euler-Bernoulli and Kirchhoff-Love Thermoelastic Plates with Fractional Coupling }
\author{Fredy Maglorio  Sobrado  Su\'arez\\
{\small Department of Mathematics, The  Federal University of Technological of Paran\'a, Brazil}\\
Lesly Daiana Barbosa Sobrado \\
{\small Institute of Mathematics,  Federal University of Rio of Janeiro, Brazil}
}
\date{}
\maketitle

\let\thefootnote\relax\footnote{{\it Email address:}   {\rm fredy@utfpr.edu.br} (Fredy Maglorio Sobrado  Su\'arez)}.

\begin{abstract}
  In this work, we present  the study of the regularity of
the solutions of the abstract system\eqref{Eq1.10} that includes the Euler-Bernoulli($\omega=0$) and Kirchoff-Love($\omega>0$)  thermoelastic plates, we consider for both fractional couplings given by $A^\sigma\theta$ and $A^\sigma u_t$, where $A$ is a strictly positive and self-adjoint linear operator and the parameter $\sigma\in[0,\frac{3}{2}]$.  Our research stems from the work of \cite{MSJR},   \cite{OroJRPata2013}, and \cite{KLiuH2021}.  Our contribution was to directly determine the Gevrey sharp classes: for $\omega=0$,   $s_{01}>\frac{1}{2\sigma-1}$   and  $s_{02}> \sigma$  when $\sigma\in (\frac{1}{2},1)$ and $\sigma\in (1,\frac{3}{2})$ respectively.   And $s_\omega>\frac{1}{4(\sigma-1)}$  for case $\omega>0$ when $\sigma\in (1,\frac{5}{4})$.  This work also contains direct proofs of the analyticity of the corresponding semigroups $e^{t\mathbb{A}_\omega}$: In the case $\omega=0$ the analyticity of the semigroup $e^{t\mathbb{A}_0}$ occurs when $\sigma=1$ and for the case $\omega>0$  the semigroup $e^{t\mathbb{A}_\omega}$ is analytic for the parameter $\sigma\in[5/4, 3/2]$.  The abstract system is given by:
\begin{equation}\label{Eq1.10}
\left\{\begin{array}{c}
u_{tt}+\omega Au_{tt}+A^2u-A^\sigma\theta=0,\\
\theta_t+A\theta+A^\sigma u_t=0.
\end{array}\right.
\end{equation}
where $\omega\geq 0$.
\end{abstract}
\maketitle
\tableofcontents

\section{Introduction}

Let $( \mathbf{H}, \dual{\cdot}{\cdot}, \parallel \cdot\parallel )$ be a complex  Hilbert space,  and let $A$  self-adjoint,  positive definite (unbounded) operator on the complex Hilbert space $ \mathbf{H}$, 
$$A\colon \mathfrak{D}(A)\subseteq\mathbf{H}\to \mathbf{H}. $$

The operator $A^r$ is positive  seft-adjoint for $r\in\mathbb{R}$, bounded for $r\leq 0$, and the embedding 
\begin{eqnarray*}
\mathfrak{D}(A^{r_1})\hookrightarrow \mathfrak{D}(A^{r_2}),
\end{eqnarray*}
is continuous for $r_1>r_2$.   For fixed 
\begin{eqnarray}\label{Eq000}
\sigma\leq\dfrac{3}{2}\qquad{\rm and}\qquad \omega\geq 0.
\end{eqnarray}
We consider the  following abstract system  hyperbolic and parabolic equation:
\begin{equation}\label{Eq1.1}
\left\{ \begin{array}{ccc}
u_{tt}+\omega Au_{tt}+A^2u-A^\sigma\theta &= &0,\\
\theta_t+A\theta+A^\sigma u_t &= & 0.
\end{array}\right.
\end{equation}
Observe that no restriction $\sigma\geq 0$  is assumed.

The properties of the asymptotic behavior and regularity of the semigroup $S_\omega(t)=e^{t\mathbb{A}_\omega}$ associated with the abstract system \eqref{Eq1.1}, have been extensively studied in recent years. Specifically speaking, in the work \cite{MSJR} they studied the abstract system that includes the thermo-elastic Euler Bernoulli plates($\omega=0$) with fractional coupling given by $A^\sigma\theta$ and $A^\sigma u_t$ and the parameter $\sigma\in[0,1]$,  the authors using the semigroup technique show that the system is exponentially stable if only if $\frac{1}{2}\leq \sigma \leq 1$,  analyticity is also proved if only if $\sigma=1$ and they prove that the semigroup is polynomially stable when $0\leq\sigma<\frac{1}{2}$ with rate $t^{- 1}$.  In the work \cite{OroJRPata2013} they study the system \eqref{Eq1.1} considering the parameter $\sigma\in[0,\frac{3}{2}]$, for the case $\omega=0$ they show that $S_0(t)$ decays polynomially to zero as $t^{-\frac{1}{1-2\sigma}}$ and this rate is optimal. They also prove using the energy method that the system is exponentially stable when $\frac{1}{2}\leq\sigma\leq 1$.  For the case $\omega>0$ the authors show that the corresponding semigroup $S_\omega(t)$ is exponentially stable if only if $\frac{3}{2}\geq \sigma\geq 1$. Also, since $\frac{1}{2}\leq \sigma <1$ determine the optimal polynomial decay rate ($t^{-\frac{1}{4-4\sigma}}$).

In 2019 and 2021, two more complete papers emerged \cite{HSLiuRacke2019, KLiuH2021}.  
The first is dedicated to the study of the asymptotic behavior of two thermoelastic plate systems, the first modeled with Fourier's law, as is the case of the system \eqref{Eq1.1}, and the second model with Cattaneo's law.   The model  thermoelastic plates with inertial rotation term given by:
\begin{equation*}
\left\{\begin{array}{c}
\rho u_{tt}+\omega A^\gamma u_{tt}+\eta Au-mA^\alpha\theta =0,\\
c\theta_t+m A^\alpha u_t+\kappa A^\beta\theta=0,\\
u(0)=u(0), \qquad u_t(0)=v_0,\qquad \theta(0)=\theta_0,
\end{array}\right.
\end{equation*}
 where $A$ is a self-adjoint,  positive definite operator on a complex Hilbert space $H$,  $\omega,\rho, \eta, \kappa>0$,  $m\not=0$,   $(\alpha,\beta)\in[0,1]\times[0,1]$ and $\gamma\in(0,1]$.   The stability analysis of the model with Fourier's law is done by applying semigroup techniques, the authors determine regions based on the 3 parameters $\gamma,\alpha, \beta$ to study,  the exponential decay and polynomial decay with optimal rate.  In the second paper of 2021 Kuang et al.  \cite{KLiuH2021} assume that
 \begin{equation*}
 (\alpha, \beta, \gamma)\in E=\bigg[0,\dfrac{\beta+1}{2}\bigg]\times [0,1]\times[0,1],
 \end{equation*}
 this work is dedicated to regularity of the semigroup $S_\omega(t)$ for $\omega\geq 0$.  The case $\omega=0$ is equivalent $\gamma=0$.  In this work, the authors divide the region $E$ into 3 parts where the associated semigroups are analytic, of Gevrey classes of a specific order,  and non-smoothing,  respectively.   Furthermore, detailed spectral analysis shows that the Gevrey-class orders are sharp under the right conditions. 
  They also show that the orders of polynomial stability obtained in \cite{HSLiuRacke2019} are optimal. In all their proofs the authors use contradictory arguments.  Our two models studied here are part of the family of models given in  \cite{KLiuH2021}, that of the Euler Bernoulli thermoelastic plate, it is the case of $(\alpha,\beta,\gamma)=(\frac{\sigma}{2}, \frac{1}{2}, 0)$ and that of the Kirchoff-Love thermoelastic plate, is the case $(\alpha,\beta,\gamma)=(\frac{\sigma}{2}, \frac{1}{2}, \frac{1}{2})$. The proofs presented in this research are direct and more friendly for readers to understand the technique used here.

Several researchers year after year have devoted their attention to the study of the asymptotic behavior and the regularity of the solutions of the thermoelastic system of plates. Regarding the analyticity of the semigroup for the Euler-Bernoulli model, one of the first results was established by Liu and Renardy \cite{LiuR95}  in the case of bounded and articulated boundary conditions. Subsequently, Liu and Liu, \cite{LiuLiu1997}, and Lasiecka and Triggiani \cite{LT1998,LT1998A,LT1998B,  LT1998C} demonstrated other analyticity results under various boundary conditions.  More research in this direction can be found at \cite{HLiu2013, JR1992, Tebon2010}.

In more recent research from 2020 Tebou et al. \cite{Tebou2020} studied thermoelastic plates considering the fractional rotational inertial force ($\gamma(-\Delta)^\tau u_{tt}$) for the parameter $\tau\in [0,1]$. In $\Omega$, limited open subset of $\mathbb{R}^n$, $n\geq 1$,  with smooth enough boundary In this research the authors prove that the semigroup associated with the system is the Gevrey class $s$ for each $s>\frac{2-\tau}{2-4\tau}$ for both:  the Hinged plate/Dirichlet temperature boundary conditions and Clamped plate/Dirichlet temperature boundary conditions when the parameter $\tau$ lies in the interval $(0,\frac{1 }{2})$, also show that the semigroup $S(t)$ is exponentially stable for Hinged boundary conditions,  for $\tau$ in the interval $ [0, 1]$ and finish their investigation,  constructing a counterexample, that,  under hinged boundary conditions,  the semigroup is not analytic, for all $ \tau$ in the interval $(0,1)$.  To determine the Gevrey class of $S(t)$ using the domain method of the frequency,  the appropriate decompositions of the components of the system, and the use of Lions'  interpolation inequalities.  More recent research in this direction can be found at \cite{Nafiri2021, HSLiuRacke2019,  Tebou2021}.

This article is organized as follows: in section 2, we study the well-posedness of the system \eqref{Eq1.1} through semigroup theory.  We leave our main contributions for the third section where we analyze the regularity,  which is subdivided into two subsections.   Subsection \eqref{SS3.1} is dedicated to the analyticity and lack of analyticity for the two cases $\omega=0$ and $\omega>0$,  for $\omega=0$( Euler-Bernoulli thermoelastic plate) we show that the semigroup $ S_0(t)=e^{t\mathbb{A}_0}$ is analytic when the parameter $\sigma=1$ and $S_0(t)=e^{t\mathbb{A}_0}$   is not analytic when the parameter $\sigma\in [0,1)\cup(1,\frac{3}{2}]$.  For case $\omega>0$( Kirchoff-Love thermoelastic plate) we show that the semigroup $S_\omega(t)=e^{t\mathbb{A}_\omega}$ is analytic when the parameter $\sigma\in [\frac{5}{4},\frac{3}{2}]$  and $S_\omega(t)=e^{t\mathbb{A}_\omega}$ is not analytic when the parameter $\sigma\in [0,\frac{5}{4})$.  In the last subsection \eqref{SS3.2} we determine the families of Gevrey sharp classes of the semigroup associated with the system\eqref{Eq1.1},  for the Euler-Bernoulli plates ($\omega=0$) we have the classes $s_{01}> \frac{1}{2\sigma-1}$ when the parameter $\sigma\in (\frac{1}{2},1)$ and $s_{02}> \sigma$ when the parameter $\sigma\in (1,\frac{3}{2})$. For thermoelastic Kirchoff-Love plates($\omega>0$) we have the Gevrey class $s_\omega>\frac{1}{4(\sigma-1)}$ when the parameter $\sigma\in(1 ,\frac{5}{4})$.  We end this paper with a remark about the exponential decay of $S(t)_{\omega\geq 0}=e^{t\mathbf{A}_\omega}$. 

   In our research, we apply the frequency domain method, spectral analysis of the operator $(-\Delta)^\sigma$ for $\sigma\in [0,\frac{3}{2}]$ and we strongly use the interpolation inequality, see Theorem \ref{Lions-Landau-Kolmogorov}.

\section{Setting of the semigroups}

For $r\in \mathbb{R}$,  we consider the compactly nested family of Hilbert spaces
\begin{equation*}
\mathbf{H}^r=\mathfrak{D}
(A^\frac{r}{2}),\qquad \dual{u}{v}_r=\dual{A^\frac{r}{2}u}{A^\frac{r}{2} v},\qquad \| u\|_r=\|A^\frac{r}{2} u\|.
\end{equation*}
For $r>0$,  it is understood that $\mathbf{H}^{-r}$ denotes the completion of the domain,  so $\mathbf{H}^{-r}$ is the dual space of $\mathbf{H}^r$.  the subscript $r$ will be always omitted whenever zero.  With this notation for  $\omega$ positive,  we can extend the operator $I+\omega A$ in the following sense:
\begin{equation*}\label{BIsometrica}
(I+\omega A)\colon \mathbf{H}^1\to \;\mathbf{H}^{-1}
\end{equation*}
defined by
\begin{equation}\label{EqPiDual}
\dual{( I+\omega A)z_1}{z_2}_{\mathbf{H}^{-1}\times \mathbf{H}^1}=\dual{z_1}{z_2}+\omega\dual{A^{1/2}z_1}{A^{1/2}z_2},
\end{equation}
for $z_1,z_2\in \mathbf{H}^1$, where $\dual{\cdot}{\cdot}$ denotes the inner product in the Hilbert space $\mathbf{H}$.  Note that this operator is an isometric operator when we consider the equivalent norm in the space $\mathbf{H}^1$: $\pbrack{\|z\|^2+\omega\|z\|_1^2}^{1/2}=\|z\|_{\mathbf{H}^1}.$

Finally,  we define the family of phase spaces

\begin{equation}\label{Eq1.1PS}
\mathcal{H}_\omega:=\left\{\begin{array}{ccc}
\mathbf{H}^2\times \mathbf{H}\times \mathbf{H}\qquad{\rm if}\qquad \omega & = & 0,\\
\mathbf{H}^2\times\mathbf{H}^1\times\mathbf{H}\qquad{\rm if}\qquad \omega& >& 0,
\end{array}\right.
\end{equation}
endowed with the Hilbert product norms
\begin{equation}\label{NormOmega}
\parallel(u,v,\theta)\parallel^2_{\mathcal{H}_\omega}:=\left\{ \begin{array}{c}
\|u\|_2^2+\|v\|^2+\|\theta\|^2\quad{\rm if}\quad \omega=0,\\
 \|u\|_2^2+\|v\|_{\mathbf{H}^1}^2+\|\theta\|^2\quad{\rm if}\quad \omega>0.
\end{array}\right.
\end{equation}

\begin{rem} Throughout the paper, Cauchy-Schuwarz, Young  and Poincar\'e  inequa-lities will be tacitly used in several occasions.
\end{rem}

Taking $v=u_t$ an considering $U=(u(t),v(t),\theta(t))$,  we rewrite system  \eqref{Eq1.1} as the ODE in $\mathcal{H}_\omega$
\begin{equation*}
\dfrac{d}{dt}U(t)=\mathbb{A}_\omega U(t),
\end{equation*}
where the linear operator $\mathbb{A}_\omega$ is defined as
\begin{equation}\label{OperatorA}
\mathbb{A}_\omega \left(\begin{array}{c}
u\\
v\\
\theta
\end{array}\right):=\left( \begin{array}{c}
v\\
(I+\omega A)^{-1}(-A^2u+A^\sigma \theta)\\
-A\theta-A^\sigma v
\end{array}\right)
\end{equation}
with domains
\begin{equation}\label{DomomegaZ}
\mathfrak{D}( \mathbb{A}_0 ):=\left\{ \left( \begin{array}{c}
u\\
v\\
\theta
\end{array}\right) \in\mathcal{H}_0 \Bigg\arrowvert   \begin{array}{c}
 v\in \mathbf{H}^2\\
 -u+A^{\sigma-2}\theta\in \mathbf{H}^4\\
 -\theta-A^{\sigma-1} v\in \mathbf{H}^2
\end{array}  \right \},\qquad \omega=0,
\end{equation}
and
\begin{equation}\label{Domomega}
\mathfrak{D}( \mathbb{A}_\omega ):=\left\{ \left( \begin{array}{c}
u\\
v\\
\theta
\end{array}\right) \in\mathcal{H}_\omega \Bigg\arrowvert   \begin{array}{c}
 v\in \mathbf{H}^2\\
 -u+A^{\sigma-2}\theta\in \mathbf{H}^3\\
 -\theta-A^{\sigma-1} v\in \mathbf{H}^2
\end{array}  \right \},\qquad \omega>0.
\end{equation}

\begin{theorem}
\label{Theorem2.3} The operator $\mathbb{A}_\omega$ is the infinitesimal generator of a contraction semigroup
\begin{equation*}
S_\omega(t)=e^{t\mathbb{A}_\omega}\colon\mathcal{H}_\omega\to\mathcal{H}_\omega
\end{equation*}
associated with the system \eqref{Eq1.1} for $ \omega\geq 0$ and $0\leq \sigma\leq \frac{3}{2}$.
\end{theorem}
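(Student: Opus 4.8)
The plan is to apply the Lumer--Phillips theorem: I will show that $\mathbb{A}_\omega$ is dissipative on $\mathcal{H}_\omega$ and that $0\in\rho(\mathbb{A}_\omega)$ (or, equivalently, that $I-\mathbb{A}_\omega$ is surjective), which together with the fact that $\mathfrak{D}(\mathbb{A}_\omega)$ is dense in $\mathcal{H}_\omega$ yields that $\mathbb{A}_\omega$ generates a $C_0$-semigroup of contractions. First I would verify dissipativity: for $U=(u,v,\theta)\in\mathfrak{D}(\mathbb{A}_\omega)$ compute $\mathrm{Re}\,\dual{\mathbb{A}_\omega U}{U}_{\mathcal{H}_\omega}$ using the norm \eqref{NormOmega}. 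The first slot contributes $\mathrm{Re}\,\dual{v}{u}_2=\mathrm{Re}\,\dual{A v}{A u}$; the second slot, via the dual pairing \eqref{EqPiDual}, contributes $\mathrm{Re}\,\dual{-A^2u+A^\sigma\theta}{v}$; the third slot contributes $\mathrm{Re}\,\dual{-A\theta-A^\sigma v}{\theta}$. The terms $\pm\mathrm{Re}\,\dual{A^2u}{v}$ cancel (self-adjointness of $A$), the terms $\pm\mathrm{Re}\,\dual{A^\sigma\theta}{v}$ cancel, and one is left with $\mathrm{Re}\,\dual{\mathbb{A}_\omega U}{U}_{\mathcal{H}_\omega}=-\|A^{1/2}\theta\|^2=-\|\theta\|_1^2\le 0$. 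Here the point of using the equivalent norm $\|\cdot\|_{\mathbf{H}^1}$ induced by $I+\omega A$, and the isometry property noted after \eqref{EqPiDual}, is precisely to make the dual pairing collapse to an honest inner product so these cancellations are legitimate.

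Next I would establish that $I-\mathbb{A}_\omega$ is onto $\mathcal{H}_\omega$: given $(f,g,h)\in\mathcal{H}_\omega$, solve $u-v=f$, $(I+\omega A)(v)+A^2u-A^\sigma\theta=(I+\omega A)g$, $\theta+A\theta+A^\sigma v=h$ for $(u,v,\theta)\in\mathfrak{D}(\mathbb{A}_\omega)$. Eliminating $v=u-f$ and $\theta=(I+A)^{-1}(h-A^\sigma v)$ reduces the system to a single elliptic-type equation for $u$ in $\mathbf{H}^2$, of the form $\big(I+\omega A+A^2+A^\sigma(I+A)^{-1}A^\sigma\big)u=F$ with $F\in\mathbf{H}$ built from $f,g,h$ (here one uses $\sigma\le 3/2$ so that $A^\sigma(I+A)^{-1}A^\sigma$ is controlled by $A^2$). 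The operator on the left is self-adjoint and strictly positive, hence boundedly invertible from $\mathbf{H}^2$ (or the appropriate dual scale) onto $\mathbf{H}$; I would make this rigorous by a Lax--Milgram argument on the coercive sesquilinear form associated with the equation on the space $\mathbf{H}^1$ (respectively $\mathbf{H}^2$), then bootstrap the regularity of $u$, $v$, $\theta$ to check membership in the domains \eqref{DomomegaZ}, \eqref{Domomega}. Density of $\mathfrak{D}(\mathbb{A}_\omega)$ is immediate since it contains the span of eigenvectors of $A$ in each slot.

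The main obstacle I anticipate is not the dissipativity — that is a short computation once the right norm is fixed — but rather checking surjectivity cleanly across the full parameter range $0\le\sigma\le 3/2$, including the delicate endpoint $\sigma=3/2$ where $A^{2\sigma}=A^3$ must be dominated using $\dual{A^\sigma v}{A^\sigma v}\le\|A v\|\,\|A^{2\sigma-2}v\|$-type interpolation, and making sure the resulting $u,v,\theta$ actually land in the stated domains rather than merely in $\mathcal{H}_\omega$; the case distinction $\omega=0$ versus $\omega>0$ (different phase spaces and different regularity in the second domain condition, $\mathbf{H}^4$ versus $\mathbf{H}^3$) has to be carried through consistently. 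A secondary technical point is justifying the extension \eqref{EqPiDual} of $I+\omega A$ and the computation of $\dual{\mathbb{A}_\omega U}{U}$ in the $\omega>0$ case, where the second component lives in $\mathbf{H}^1$ and the term $-A^2u+A^\sigma\theta$ must be interpreted in $\mathbf{H}^{-1}$; once that pairing is set up correctly the argument proceeds as above.
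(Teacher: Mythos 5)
The paper does not actually prove this theorem --- it defers to \cite{OroJRPata2013}, Theorem 2.3, where the argument is exactly the Lumer--Phillips route you outline (dissipativity via $\mathrm{Re}\,\dual{\mathbb{A}_\omega U}{U}_{\mathcal{H}_\omega}=-\|\theta\|_1^2\leq 0$, plus surjectivity of $I-\mathbb{A}_\omega$ by reduction to a coercive elliptic problem for $u$) --- so your plan is correct and coincides with the intended proof. Two cosmetic points to tighten when writing it out: after eliminating $v$ and $\theta$ the right-hand side of the reduced equation need not lie in $\mathbf{H}$ (e.g.\ $A^{2\sigma}(I+A)^{-1}f$ only lands in $\mathbf{H}^{4-4\sigma}$, which is $\mathbf{H}^{-2}$ at $\sigma=\tfrac{3}{2}$), so the Lax--Milgram form must indeed be run on $\mathbf{H}^2$ against its dual as you hedge; and density of $\mathfrak{D}(\mathbb{A}_\omega)$ is better argued from the inclusion $\mathfrak{D}(A^{N})\times\mathfrak{D}(A^{N})\times\mathfrak{D}(A^{N})\subset\mathfrak{D}(\mathbb{A}_\omega)$ for $N$ large than from eigenvector expansions, since compactness of the resolvent of $A$ is not assumed in the abstract setting of this theorem.
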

\begin{proof}
See \cite{OroJRPata2013} Theorem 2.3.
\end{proof}
\begin{rem}\label{Remark 2.4}
The operator $\mathbb{A}_\omega$  does not generate a contraction semigroup  whenever 
\begin{equation*}
\sigma=\dfrac{3}{2}+\varepsilon, \qquad \varepsilon>0.
\end{equation*}
See Remark 2.4.  \cite{OroJRPata2013}.
\end{rem}
\begin{theorem}[Interpolation]\label{Lions-Landau-Kolmogorov}  Let $\alpha<\beta<\gamma$. Then there exists a constant $L=L(\alpha,\beta,\gamma)$ such that
\begin{equation}\label{ILLK}
\|A^\beta u\|\leq L\|A^\alpha u\|^\frac{\gamma-\beta}{\gamma-\alpha}\cdot \|A^\gamma u\|^\frac{\beta-\alpha}{\gamma-\alpha}
\end{equation}
for every $u\in\mathfrak{D}(A^\gamma)$.
\end{theorem}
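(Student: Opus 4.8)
The plan is to prove the inequality \eqref{ILLK} by invoking the spectral theorem for the self-adjoint positive definite operator $A$. Since $A$ is self-adjoint and positive definite, its spectrum satisfies $\mathrm{spec}(A)\subseteq[\mu_0,\infty)$ for some $\mu_0>0$, and $A$ admits a spectral resolution $A=\int_{\mu_0}^{\infty}\lambda\,dE_\lambda$. For every real $r$ and every $u\in\mathfrak{D}(A^r)$ one then has $\|A^r u\|^2=\int_{\mu_0}^{\infty}\lambda^{2r}\,d\|E_\lambda u\|^2$, where $d\|E_\lambda u\|^2$ is a finite positive Borel measure on $[\mu_0,\infty)$. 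Because the embeddings $\mathfrak{D}(A^\gamma)\hookrightarrow\mathfrak{D}(A^\beta)\hookrightarrow\mathfrak{D}(A^\alpha)$ are continuous, for $u\in\mathfrak{D}(A^\gamma)$ the three quantities $\|A^\alpha u\|$, $\|A^\beta u\|$, $\|A^\gamma u\|$ are all finite, so the corresponding integrals converge.

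Next I would introduce the interpolation weight $\theta:=\dfrac{\gamma-\beta}{\gamma-\alpha}\in(0,1)$, so that $1-\theta=\dfrac{\beta-\alpha}{\gamma-\alpha}$ and $\beta=\theta\alpha+(1-\theta)\gamma$; consequently, for every $\lambda\geq\mu_0$, $\lambda^{2\beta}=\big(\lambda^{2\alpha}\big)^{\theta}\big(\lambda^{2\gamma}\big)^{1-\theta}$. Applying H\"older's inequality with conjugate exponents $1/\theta$ and $1/(1-\theta)$ against the measure $d\|E_\lambda u\|^2$ gives
\begin{align*}
\|A^\beta u\|^2
&=\int_{\mu_0}^{\infty}\big(\lambda^{2\alpha}\big)^{\theta}\big(\lambda^{2\gamma}\big)^{1-\theta}\,d\|E_\lambda u\|^2\\
&\leq\Big(\int_{\mu_0}^{\infty}\lambda^{2\alpha}\,d\|E_\lambda u\|^2\Big)^{\theta}\Big(\int_{\mu_0}^{\infty}\lambda^{2\gamma}\,d\|E_\lambda u\|^2\Big)^{1-\theta}
=\|A^\alpha u\|^{2\theta}\,\|A^\gamma u\|^{2(1-\theta)}.
\end{align*}
Taking square roots yields \eqref{ILLK}, in fact with the explicit constant $L=1$; retaining a general $L=L(\alpha,\beta,\gamma)$ in the statement costs nothing and makes the inequality stable under passing to equivalent norms, which is how it will be used later.

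An alternative route I would mention, bypassing the spectral integral, uses only Young's inequality: since $\alpha<\beta<\gamma$, for every $\varepsilon>0$ and $\lambda\geq\mu_0$ one has $\lambda^{2\beta}\leq\varepsilon\lambda^{2\gamma}+C_\varepsilon\lambda^{2\alpha}$ for a suitable $C_\varepsilon=C_\varepsilon(\alpha,\beta,\gamma)$; integrating this against $d\|E_\lambda u\|^2$ gives the additive form $\|A^\beta u\|^2\leq\varepsilon\|A^\gamma u\|^2+C_\varepsilon\|A^\alpha u\|^2$, and minimizing the right-hand side over $\varepsilon>0$ recovers the multiplicative bound. There is no serious obstacle in either approach; the only point deserving a line of care is the verification that $d\|E_\lambda u\|^2$ is a finite measure and that the integrals on the right are finite — this is precisely where the hypothesis $u\in\mathfrak{D}(A^\gamma)$ is needed, together with $\mu_0>0$, which guarantees that no singularity arises at $\lambda=0$ even when $\alpha<0$.
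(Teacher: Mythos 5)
Your argument is correct, and it is in fact more self-contained than what the paper does: the paper's ``proof'' of Theorem~\ref{Lions-Landau-Kolmogorov} consists solely of a citation to Theorem~5.34 of \cite{EN2000}, where the moment inequality is established for fractional powers of sectorial operators by a considerably more technical route (integral representations of $A^{z}$). Because the standing hypotheses here are much stronger --- $A$ is self-adjoint and positive definite, so $\mathrm{spec}(A)\subseteq[\mu_0,\infty)$ with $\mu_0>0$ --- your spectral-calculus argument is the natural one: writing $\|A^r u\|^2=\int_{\mu_0}^\infty \lambda^{2r}\,d\|E_\lambda u\|^2$, checking that $\beta=\theta\alpha+(1-\theta)\gamma$ for $\theta=\frac{\gamma-\beta}{\gamma-\alpha}$, and applying H\"older with exponents $1/\theta$, $1/(1-\theta)$ gives \eqref{ILLK} in two lines, with the sharp constant $L=1$. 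The points you flag as needing care are exactly the right ones: the spectral measure $d\|E_\lambda u\|^2$ has total mass $\|u\|^2$, the $\gamma$-integral is finite precisely because $u\in\mathfrak{D}(A^\gamma)$, and the $\alpha$-integral is controlled by $\lambda^{2\alpha}\leq\mu_0^{2(\alpha-\gamma)}\lambda^{2\gamma}$ since $\lambda\geq\mu_0>0$ and $\alpha<\gamma$ (this is the continuous embedding $\mathfrak{D}(A^{\gamma})\hookrightarrow\mathfrak{D}(A^{\alpha})$ the paper invokes elsewhere). Your alternative via the $\varepsilon$-weighted Young inequality and optimization over $\varepsilon$ is also valid and is essentially equivalent. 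What the citation-based approach buys is generality (it covers non-self-adjoint sectorial generators); what your approach buys is a short, transparent proof with an explicit constant under the hypotheses actually in force in this paper.
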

\begin{proof}
See  Theorem  5.34 \cite{EN2000}.
\end{proof}
In what follows, $C$, $C_\varepsilon$, and $C_\delta$  will denote a positive constant that assumes different values in different places.

\section{Regularity  of thermoelastic Euler-Bernoulli and Kirchhoff-Love plates}
\label{s:mt}

This subsection will be dedicated to the study of the analyticity and the determination of the  Gevrey sharp class for $\omega=0$ and for $\omega>0$ of the semigroup $S(t)=e^{t\mathbb{A}_\omega}$ the study will be approached using the frequency domain characterization of semigroup properties and spectral theory,  proofs using direct argument are prioritized.

{\bf First,  for case $\omega=0$:}  note that if $\lambda\in \R$  and $F=(f, g, h)\in \mathcal{H}_0$ then the solution $U=(u,v,\theta)\in\mathfrak{D}(\mathbb{A
 }_0)$ of the stationary system $(i\lambda I- \mathbb{A
 }_0)U=F$ can be written by
\begin{eqnarray}
i\lambda u-v &=& f\qquad {\rm in}\quad \mathbf{H}^2\label{Pesp0-10}\\
i\lambda v+ A^2 u- A^\sigma\theta &=& g\qquad {\rm in}\quad\mathbf{H}\label{Pesp0-20}\\
i\lambda\theta+ A\theta + A^\sigma v&=& h \qquad {\rm in}\quad\mathbf{H}\label{Pesp0-30}
\end{eqnarray}
we have to
\begin{equation}\label{Pdis0-10}
\|\theta\|^2_1=\text{Re}\dual{(i\lambda -\mathbb{A}_0)U}{U}=\text{Re}\dual{F}{U}\leq \|F\|_{\mathcal{H}_0}\|U\|_{\mathcal{H}_0}.
\end{equation}

{\bf Second,  for case $\omega>0$: } now,  note that if $\lambda\in \R$  and $F=(f, g, h)\in \mathcal{H}_\omega$ then the solution $U=(u,v,\theta)\in\mathfrak{D}(\mathbb{A
 }_\omega)$ of the stationary system $(i\lambda I- \mathbb{A
 }_\omega)U=F$ can be written by
\begin{eqnarray}
i\lambda u-v &=& f\quad {\rm in}\quad \mathbf{H}^2\label{Pesp-10}\\
i\lambda( I+\omega A) v+ A^2 u-A^\sigma\theta &=&(I+\omega A) g\quad {\rm in}\quad \mathbf{H}^{-1}\label{Pesp-20}\\
i\lambda\theta+ A\theta+ A^\sigma v&=& h \quad{\rm in}\quad \mathbf{H}\label{Pesp-30}
\end{eqnarray}
Now,  we have to
\begin{equation}\label{Pdis-10}
\|\theta\|^2_1=\text{Re}\dual{(i\lambda -\mathbb{A}_\omega)U}{U}=\text{Re}\dual{F}{U}\leq\|F\|_{\mathcal{H}_\omega}\|U\|_{\mathcal{H}_\omega}.
\end{equation}

\subsection{On the Analyticity of $S(t)=e^{t\mathbb{A}_\omega}$}
\label{SS3.1}

In this subsection, we study the analyticity and the lack of analyticity and the semigroup associated with the system \eqref{Eq1.1},  then we present a theorem that characterizes the analyticity via semigroup theory of the book of Liu-Zheng\cite{LiuZ} (Theorem 1.3.3),       and also some previous results that will be used in this section.

\begin{theorem}[see \cite{LiuZ}]\label{LiuZAnalyticity}
	Let $S(t)=e^{t\mathbb{A}}$ be $C_0$-semigroup of contractions  on a Hilbert space $ \mathcal{H}$. Suppose that
	\begin{equation}\label{EixoIm}
	\rho(\mathbb{A})\supseteq\{ i\lambda/ \lambda\in \R \} 	\equiv i\R.
	\end{equation}
	 Then $S(t)$ is analytic if and only if
	\begin{equation}\label{Analyticity}
	 \limsup\limits_{|\lambda|\to
		\infty}
	\|\lambda(i\lambda I-\mathbb{A})^{-1}\|_{\mathcal{L}( \mathcal{H})}<\infty
	\end{equation}
	holds.
\end{theorem}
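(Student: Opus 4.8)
The statement is the classical resolvent characterisation of analytic semigroups, and my plan is to reproduce its proof. The ``only if'' direction I would dispatch quickly: if $S(t)$ is analytic then, being a contraction semigroup, it is in fact a bounded analytic semigroup, so $\rho(\mathbb{A})$ contains a sector $\{\zeta\neq0:|\arg\zeta|<\tfrac{\pi}{2}+\delta\}$ on which $\|(\zeta I-\mathbb{A})^{-1}\|\le C/|\zeta|$; specialising to $\zeta=i\lambda$ gives exactly \eqref{Analyticity}.

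For the ``if'' direction I would assume \eqref{EixoIm} together with $M:=\sup_{|\lambda|\ge\lambda_0}\|\lambda(i\lambda I-\mathbb{A})^{-1}\|<\infty$ and first propagate the resolvent off the imaginary axis. Writing $\zeta=\beta+i\lambda$ and factoring
\[
\zeta I-\mathbb{A}=(i\lambda I-\mathbb{A})\bigl[I+\beta(i\lambda I-\mathbb{A})^{-1}\bigr],
\]
the Neumann series converges whenever $|\beta|\,\|(i\lambda I-\mathbb{A})^{-1}\|\le\tfrac12$, giving $\|(\zeta I-\mathbb{A})^{-1}\|\le 2\|(i\lambda I-\mathbb{A})^{-1}\|$; for $|\lambda|\ge\lambda_0$ this is guaranteed once $|\beta|\le|\lambda|/(2M)$, and there $\|(\zeta I-\mathbb{A})^{-1}\|\le 2M/|\lambda|\le C/|\zeta|$. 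Combining this with the contraction estimate $\|(\zeta I-\mathbb{A})^{-1}\|\le 1/\mathrm{Re}\,\zeta$ for $\mathrm{Re}\,\zeta>0$, and with the boundedness of $\zeta\mapsto(\zeta I-\mathbb{A})^{-1}$ on the compact arc $\{i\lambda:|\lambda|\le\lambda_0\}$ (which follows from \eqref{EixoIm} and the analyticity of the resolvent on $\rho(\mathbb{A})$), a covering argument produces $\delta\in(0,\tfrac{\pi}{2})$ and $C>0$ with
\[
\Sigma_\delta:=\bigl\{\zeta\neq0:|\arg\zeta|<\tfrac{\pi}{2}+\delta\bigr\}\subseteq\rho(\mathbb{A}),\qquad \|(\zeta I-\mathbb{A})^{-1}\|\le\frac{C}{|\zeta|}\ \text{ on }\ \Sigma_\delta.
\]

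With this sectorial bound in place I would then build the analytic extension by a Dunford integral: for $t$ in the sector $|\arg t|<\delta$ set
\[
T(t):=\frac{1}{2\pi i}\int_{\Gamma}e^{\zeta t}(\zeta I-\mathbb{A})^{-1}\,d\zeta,
\]
with $\Gamma$ the boundary of $\Sigma_{\delta'}$ for some $0<\delta'<\delta$, oriented from $\infty\,e^{-i(\pi/2+\delta')}$ to $\infty\,e^{i(\pi/2+\delta')}$. The estimate $\|(\zeta I-\mathbb{A})^{-1}\|\le C/|\zeta|$ makes the integral absolutely convergent, holomorphic in $t$, and uniformly bounded on the sector; deforming $\Gamma$ one checks the semigroup law $T(t)T(s)=T(t+s)$ and $T(t)\to I$ strongly as $t\to0$ within the sector, and differentiating under the integral identifies the generator of $T(\cdot)$ with $\mathbb{A}$, whence $T(t)=S(t)$ and $S(t)$ is analytic.

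I expect the main obstacle to be the first part of the ``if'' direction: extracting a genuine sector with the \emph{homogeneous} bound $C/|\zeta|$ from the information on the imaginary axis alone, and patching the three regimes — the Neumann strip $|\mathrm{Re}\,\zeta|\lesssim|\mathrm{Im}\,\zeta|$, the half-plane $\mathrm{Re}\,\zeta>0$, and a bounded neighbourhood of the origin — into one consistent resolvent estimate. Once that sectorial estimate is secured, the contour-integral construction of the analytic extension is routine.
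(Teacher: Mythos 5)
The paper offers no proof of this statement: it is quoted verbatim from Liu--Zheng (Theorem 1.3.3), and the body of the paper only ever \emph{uses} it, via Remark \ref{EAnalyticity1}. Your proposal reconstructs the standard textbook argument behind that citation, and it is essentially correct: the Neumann-series propagation $\zeta I-\mathbb{A}=(i\lambda I-\mathbb{A})\bigl[I+\beta(i\lambda I-\mathbb{A})^{-1}\bigr]$ off the imaginary axis, patched with the Hille--Yosida bound $\|(\zeta I-\mathbb{A})^{-1}\|\le 1/\mathrm{Re}\,\zeta$ and with compactness on the arc $|\lambda|\le\lambda_0$, is exactly how one converts the imaginary-axis condition \eqref{Analyticity} into the sectorial estimate, after which the Dunford integral is routine. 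Two small points to tighten. First, the bound $C/|\zeta|$ alone does not make your contour integral absolutely convergent at the origin (the boundary of $\Sigma_{\delta'}$ passes through $0$ and $\int_0 r^{-1}\,dr$ diverges); you should either indent $\Gamma$ around the origin or note that \eqref{EixoIm} gives $0\in\rho(\mathbb{A})$, so the resolvent is actually bounded near $\zeta=0$. Second, in the ``only if'' direction the step ``analytic contraction semigroup $\Rightarrow$ bounded analytic on a subsector, hence sectorial resolvent estimate'' is correct but not completely free; it rests on the classical equivalences for bounded analytic semigroups (local boundedness near $0$ in closed subsectors from strong continuity plus uniform boundedness, then the Cauchy estimate for $\|\mathbb{A}S(t)\|\le C/t$), so a citation or a sentence of justification is warranted there. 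Neither point affects the validity of the approach.
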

\begin{rem}\label{EAnalyticity1}
Note that to show the condition \eqref{Analyticity} it is enough to show that: Let $\delta>0$. There exists a constant  $C_\delta > 0$ such that the solutions of the system \eqref{Eq1.1}  for $|\lambda|>\delta$,   satisfy the inequality
\begin{equation}\label{EAnalyticity2}
|\lambda|\dfrac{\|U\|_\mathcal{H}}{\|F\|_\mathcal{H}}\leq C_\delta\qquad \Longleftrightarrow\qquad
|\lambda|\|U\|^2_{\mathcal{H}}\leq C_\delta\|F\|_\mathcal{H}\|U\|_{\mathcal{H}}.
\end{equation}
\end{rem}
Next, we will show two lemmas that will be fundamental to achieving our results.
\begin{lem}\label{Lemma001Exp}
Let $\delta > 0$. There exists $C_\delta > 0$ such that the solutions of the system \eqref{Eq1.1} for $|\lambda|>0$,  satisfy
\begin{equation}\label{Exp000}
    \limsup\limits_{|\lambda|\to
   \infty}
   \|(i\lambda I-\mathbb{A}_\omega)^{-1}\|_{\mathcal{L}(\mathcal{H}_\omega)}<\infty\qquad{\rm for}\quad \left\{\begin{array}{c}
   \omega=0\quad{\rm and}\quad \dfrac{1}{2}\leq \sigma\leq\dfrac{3}{2},\\\\
   \omega>0\quad{\rm and}\quad 1\leq \sigma\leq \dfrac{3}{2}.
   \end{array}\right.
\end{equation}
\end{lem}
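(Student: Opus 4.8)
The plan is to reduce \eqref{Exp000} to the uniform a priori estimate $\|U\|_{\mathcal{H}_\omega}\le C\,\|F\|_{\mathcal{H}_\omega}$, with $C$ independent of $\lambda\in\R$, for $U=(u,v,\theta)\in\mathfrak{D}(\mathbb{A}_\omega)$ solving $(i\lambda I-\mathbb{A}_\omega)U=F$. Since $\mathbb{A}_\omega$ generates a bounded $C_0$-semigroup (Theorem \ref{Theorem2.3}) one has $\{\mathrm{Re}\,z>0\}\subseteq\rho(\mathbb{A}_\omega)$, so such an a priori bound forces $i\R\subseteq\rho(\mathbb{A}_\omega)$ by the usual continuation argument and yields \eqref{Exp000}. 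The point of departure is the dissipation identity \eqref{Pdis0-10} (resp.\ \eqref{Pdis-10}): $\|A^{1/2}\theta\|^{2}=\|\theta\|_1^{2}\le\|F\|_{\mathcal{H}_\omega}\|U\|_{\mathcal{H}_\omega}$; together with the boundedness of $A^{-1/2}$ this controls $\|\theta\|$. It then remains to recover the elastic term $\|Au\|=\|u\|_2$ and the kinetic term $\|v\|$ (resp.\ $\|v\|_{\mathbf{H}^1}$ when $\omega>0$).

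For the elastic term I would pair \eqref{Pesp0-20} (resp.\ \eqref{Pesp-20}) with $u$ and kill the factor $i\lambda$ by inserting \eqref{Pesp0-10} (resp.\ \eqref{Pesp-10}) in the form $u=(i\lambda)^{-1}(v+f)$; the $\lambda$'s cancel and one is left with an identity of the type $\|Au\|^{2}=\|v\|^{2}+\mathrm{Re}\,\dual{A^{\sigma}\theta}{u}+(\text{pairings of }F\text{ with }u,v)$ (when $\omega>0$ the left side is $\|v\|_{\mathbf{H}^1}^{2}$ and there are further $\omega A$-pairings, still linear in $F$). The only nontrivial term, $\dual{A^{\sigma}\theta}{u}=\dual{A^{\sigma-1}\theta}{Au}$, is handled with Theorem \ref{Lions-Landau-Kolmogorov}: since $\sigma\le\frac32$ one has $\|A^{\sigma-1}\theta\|\le C\|A^{1/2}\theta\|$, hence $|\dual{A^{\sigma}\theta}{u}|\le C\|A^{1/2}\theta\|\,\|Au\|$.

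The crucial step is the kinetic term, and the idea is to test \eqref{Pesp0-20} (resp.\ \eqref{Pesp-20}) with $A^{-\sigma}\theta$. The elliptic term then contributes $-\|\theta\|^{2}$; the term $\dual{A^{2}u}{A^{-\sigma}\theta}=\dual{Au}{A^{1-\sigma}\theta}$ is again controlled by interpolation, this time requiring $\sigma\ge\frac12$ so that $\|A^{1-\sigma}\theta\|\le C\|A^{1/2}\theta\|$; and the dangerous term $i\lambda\dual{v}{A^{-\sigma}\theta}$ (resp.\ $i\lambda\dual{(I+\omega A)v}{A^{-\sigma}\theta}$) is rewritten by inserting \eqref{Pesp0-30} (resp.\ \eqref{Pesp-30}) in the form $i\lambda\theta=h-A\theta-A^{\sigma}v$. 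Since $A^{-\sigma}A^{\sigma}v=v$, this substitution reproduces exactly $\|v\|^{2}$ (resp.\ $\|v\|^{2}+\omega\|A^{1/2}v\|^{2}=\|v\|_{\mathbf{H}^1}^{2}$), leaving a remainder made of pairings such as $\dual{v}{A^{-\sigma}h}$, $\dual{v}{A^{1-\sigma}\theta}$, $\dual{Au}{A^{1-\sigma}\theta}$ and, when $\omega>0$, $\omega\dual{v}{A^{1-\sigma}h}$ and $\omega\dual{A^{1/2}v}{A^{3/2-\sigma}\theta}$ (obtained from $\omega\dual{v}{A^{2-\sigma}\theta}$ by moving a half-power of $A$ onto $v$); controlling the last two is exactly what forces the stronger restriction $\sigma\ge1$ in the case $\omega>0$, since one needs both $A^{1-\sigma}$ bounded and $\|A^{3/2-\sigma}\theta\|\le C\|A^{1/2}\theta\|$.

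Collecting the three estimates and using the embeddings $\mathbf{H}^2\hookrightarrow\mathbf{H}^1\hookrightarrow\mathbf{H}$ and the definition of $\|\cdot\|_{\mathcal{H}_\omega}$ — so that $\|f\|,\|g\|,\|h\|\le C\|F\|_{\mathcal{H}_\omega}$, $\|u\|\le C\|Au\|$, $\|v\|\le\|U\|_{\mathcal{H}_\omega}$ and, when $\omega>0$, $\|A^{1/2}v\|\le C\|U\|_{\mathcal{H}_\omega}$ — every remainder term is bounded by $C\|F\|_{\mathcal{H}_\omega}\|U\|_{\mathcal{H}_\omega}$ or by $C\|F\|_{\mathcal{H}_\omega}^{1/2}\|U\|_{\mathcal{H}_\omega}^{3/2}$. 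Hence $\|U\|_{\mathcal{H}_\omega}^{2}\le C\big(\|F\|_{\mathcal{H}_\omega}\|U\|_{\mathcal{H}_\omega}+\|F\|_{\mathcal{H}_\omega}^{1/2}\|U\|_{\mathcal{H}_\omega}^{3/2}\big)$; dividing by $\|U\|_{\mathcal{H}_\omega}$ and solving the elementary quadratic inequality in $\|U\|_{\mathcal{H}_\omega}^{1/2}$ gives $\|U\|_{\mathcal{H}_\omega}\le C\|F\|_{\mathcal{H}_\omega}$, uniformly in $\lambda$, i.e.\ \eqref{Exp000}. I expect the main obstacle to be the kinetic step: discovering that $A^{-\sigma}\theta$ is the right multiplier and then checking that the three substitutions keep every fractional power of $\theta$ inside the interpolation window $[0,\frac12]$ relative to $\|A^{1/2}\theta\|$; this bookkeeping is precisely what yields the ranges $\frac12\le\sigma\le\frac32$ for $\omega=0$ and $1\le\sigma\le\frac32$ for $\omega>0$.
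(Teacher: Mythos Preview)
Your proposal is correct and is essentially the paper's argument: dissipation controls $\|A^{1/2}\theta\|$; pairing the second equation with $u$ gives $\|Au\|^2$ in terms of the kinetic term plus remainders; and the $A^{-\sigma}$-weighted multiplier recovers $\|v\|^2$ (resp.\ $\|v\|_{\mathbf{H}^1}^2$), with exactly the stated $\sigma$-ranges. The only cosmetic difference is that the paper pairs the \emph{third} equation with $A^{-\sigma}v$ (resp.\ $A^{-\sigma}(I+\omega A)v$ when $\omega>0$) and then substitutes the second, whereas you pair the second with $A^{-\sigma}\theta$ and substitute the third; the resulting identity is the same.
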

\begin{proof}  Let's split the proof into two cases: $\omega=0$  and $\omega>0$.

{ \bf Case: $\omega=0$:}   Similarly to the equivalence given in the observation \eqref{EAnalyticity1}  to show \eqref{Exp000}$_1$,  it suffices to show that
\begin{equation}\label{GevreyC1Eq001}
\|u\|^2_2+\|v\|^2+\|\theta\|^2 \leq C_\delta\|F\|_{\mathcal{H}_0}\|U\|_{\mathcal{H}_0}\quad{\rm for}\quad  \dfrac{1}{2}\leq  \sigma\leq \dfrac{3}{2}.
\end{equation}

As $0<\frac{1}{2}$, applying continuous immersions and estimative \eqref{Pdis0-10},  we will have $\|\theta\|^2\leq C_\delta\|F\|_{\mathcal{H}_0}\|U\|_{\mathcal{H}_0}$, therefore it remains to show that
\begin{equation*}
\|u\|^2_2+\|v\|^2\leq C_\delta\|F\|_{\mathcal{H}_0}\|U\|_{\mathcal{H}_0}.
\end{equation*}
Taking the duality product between equation \eqref{Pesp0-20} and $u$ and using the equation \eqref{Pesp0-10} taking advantage of the self-adjointness of the powers of the operator $A$, we obtain
\begin{eqnarray*}
\|u\|^2_2 & = &  \dual{v}{i\lambda u}+\dual{A^\frac{1}{2}\theta}{A^{\sigma-\frac{1}{2}}u}+\dual{g}{u}\\
& =  &  \|v\|^2+\dual{v}{f}+\dual{A^\frac{1}{2}\theta}{A^{\sigma-\frac{1}{2}}u}+\dual{g}{u},
\end{eqnarray*}
then
\begin{equation}
\label{GC1Eq001}
\|u\|^2_2 \leq  C_\delta\|F\|_{\mathcal{H}_0}\|U\|_{\mathcal{H}_0}+\varepsilon\|A^{\sigma-\frac{1}{2}}u\|^2+\|v\|^2.
\end{equation}
Taking the duality product between equation \eqref{Pesp0-30} and $A^{-\sigma}v$,  using the equation \eqref{Pesp0-20} and  now  taking advantage of the self-adjointness of the powers of the operator $A$, we obtain
\begin{equation*}
\|v\|^2=\|\theta\|^2-\dual{A^\frac{1}{2}\theta}{A^{\frac{3}{2}-\sigma}u}-\dual{A^\frac{1}{2}\theta}{A^{\frac{1}{2}-\sigma}v}+\dual{A^{-\sigma}\theta}{g}+\dual{h}{A^{-\sigma}v},
\end{equation*}
considering  $\sigma\geq \frac{1}{2}$, we have  $\frac{3}{2}-\sigma\leq 1$ and  $\frac{1}{2}-\sigma\leq 0$,   the continuous embedding $\mathfrak{D}(A^{r_1}) \hookrightarrow\mathfrak{D}(A^{r_2})$,  $r_1>r_2$  and using the inequalities Cauchy-Schwarz and Young, for $\varepsilon>0$ exists $C_\varepsilon>0$ which does not depend on $\lambda$  such that
\begin{equation}\label{GC1Eq002}
\|v\|^2\leq  C_\delta\|F\|_{\mathcal{H}_0}\|U\|_{\mathcal{H}_0}+\varepsilon\|u\|^2_2\quad{\rm for}\quad \dfrac{1}{2}\leq  \sigma\leq\dfrac{3}{2}.
\end{equation}
Using  \eqref{GC1Eq002} in \eqref{GC1Eq001}, we have
\begin{equation*}
\|u\|^2_2 \leq  C_\delta\|F\|_{\mathcal{H}_0}\|U\|_{\mathcal{H}_0}+\varepsilon\|u\|^2_2\quad{\rm for}\quad  \dfrac{1}{2}\leq \sigma\leq\dfrac{3}{2},
\end{equation*}
then
\begin{equation}
\label{GC1Eq003}
\|u\|^2_2 \leq  C_\delta\|F\|_{\mathcal{H}_0}\|U\|_{\mathcal{H}_0}\quad{\rm for}\quad  \dfrac{1}{2}\leq \sigma\leq\dfrac{3}{2}.
\end{equation}
Finally,  using   \eqref{GC1Eq003} in \eqref{GC1Eq002},   finish proof this is the case. 

{\bf Case: $\omega>0$:}  Similarly to the equivalence given in the observation
\eqref{EAnalyticity1},   to show \eqref{Exp000}$_2$,  it suffices to show that 
\begin{equation}\label{GevreyC2Eq001}
\|u\|^2_2+\|v\|^2_{\mathbf{H}^1}+\|\theta\|^2 \leq C_\delta\|F\|_{\mathcal{H}_\omega}\|U\|_{\mathcal{H}_\omega}.
\end{equation}

As $0<\frac{1}{2}$, applying continuous immersions and estimative \eqref{Pdis-10},  we will have $\|\theta\|^2\leq C_\delta\|F\|_{\mathcal{H}_\omega}\|U\|_{\mathcal{H}_\omega}$, therefore it remains to show that
\begin{equation*}
\|u\|^2_2+\|v\|^2_{\mathbf{H}^1}\leq C_\delta\|F\|_{\mathcal{H}_\omega}\|U\|_{\mathcal{H}_\omega}.
\end{equation*}
Taking the duality product between equation \eqref{Pesp-20} and $u$ and using the equation \eqref{Pesp-10} taking advantage of the self-adjointness of the powers of the operator $A$, we obtain
\begin{eqnarray*}
\|u\|^2_2 & = &  \dual{(I+\omega A)v}{i\lambda u}+\dual{A^\frac{1}{2}\theta}{A^{\sigma-\frac{1}{2}}u}+\dual{(I+\omega A)g}{u}\\
& = &  \|v\|^2_{\mathbf{H}^1}+\dual{((I+\omega A)v}{f}+\dual{A^\frac{1}{2}\theta}{A^{\sigma-\frac{1}{2}}u}+\dual{(I+\omega A)g}{u},
\end{eqnarray*}
then 
\begin{equation}
\label{GC2Eq001}
\|u\|^2_2 \leq  C_\delta\|F\|_{\mathcal{H}_\omega}\|U\|_{\mathcal{H}_\omega}+\varepsilon\|A^{\sigma-\frac{1}{2}}u\|^2+ \|v\|^2_{\mathbf{H}^1}.
\end{equation}
Taking the duality product between equation \eqref{Pesp-30} and $A^{-\sigma}(I+\omega A)v$,  using the equation \eqref{Pesp-20} and  now  taking advantage of the self-adjointness of the powers of the operator $A$, we obtain
\begin{eqnarray*}
 \|v\|^2_{\mathbf{H}^1} & = &  \dual{A^{-\sigma}\theta}{-A^2u+A^\sigma\theta+(I+\omega A)g}-\dual{A^\frac{1}{2}\theta}{A^{\frac{1}{2}-\sigma}}-\dual{A^\frac{1}{2}\theta}{A^{\frac{3}{2}-\sigma}v}\\
 & & +\dual{h}{A^{-\sigma}v}+\omega\dual{h}{A^{1-\sigma}v} \\
 & = & \|\theta\|^2-\dual{A^\frac{1}{2}\theta}{A^{\frac{3}{2}-\sigma }u}-\dual{A^\frac{1}{2}\theta}{A^{\frac{1}{2}-\sigma}v}+\dual{A^{-\sigma}\theta}{g}+\omega\dual{\theta}{A^{1-\sigma}g}\\
 & & -\dual{A^\frac{1}{2}\theta}{A^{\frac{3}{2}-\sigma}v}+\dual{h}{A^{-\sigma}v}+\omega\dual{h}{A^{1-\sigma}v}.
\end{eqnarray*}
Considering  $\sigma\geq 1$, we have $\frac{3}{3}-\sigma\leq 1$,  $\frac{1}{2}-\sigma\leq \frac{1}{2}$ and $\frac{3}{2}-\sigma\leq \frac{1}{2}$ the continuous embedding $\mathfrak{D}(A^{r_1}) \hookrightarrow\mathfrak{D}(A^{r_2})$,  $r_1>r_2$  and using the inequalities Cauchy-Schwarz and Young, for $\varepsilon>0$ exists $C_\varepsilon>0$ which does not depend on $\lambda$  such that
\begin{equation}\label{GC2Eq002}
\|v\|^2_{\mathbf{H}^1}\leq C_\delta\|F\|_{\mathcal{H}_\omega}\|U\|_{\mathcal{H}_\omega}+\varepsilon[\|u\|_2^2+\|v\|^2_{\mathbf{H}^1}]\qquad {\rm for}\quad  1\leq \sigma\leq \dfrac{3}{2}.
\end{equation}
Using \eqref{GC2Eq002} in \eqref{GC2Eq001}, we obtain
\begin{equation}\label{GC2Eq003}
\|u\|^2_2 \leq  C_\delta\|F\|_{\mathcal{H}_\omega}\|U\|_{\mathcal{H}_\omega}+\varepsilon\|v\|^2_{\mathbf{H}^1}\qquad {\rm for}\quad  1\leq  \sigma\leq \dfrac{3}{2}.
\end{equation}
Finally,  from \eqref{GC2Eq002} and \eqref{GC2Eq003},  the proof of the second case of this lemma is finished.

\end{proof}

\begin{lem}\label{Lemma002Exp0}
Let $\delta > 0$. There exists $C_\delta > 0$ such that the solutions of the system \eqref{Eq1.1}  for $|\lambda|>\delta$ and $\omega=0$,  satisfy

\begin{eqnarray}
\nonumber
&(i)_1 &   |\lambda|\|v\|^2   \leq   |\lambda|[\|u\|^2_2+\|\theta\|^2] +C_\delta\|F\|_{\mathcal{H}_0}\|U\|_{\mathcal{H}_0} \quad {\rm for} \quad  0\leq \sigma\leq \dfrac{3}{2},\\
\label{ItemiLemma07}
& {\rm or}&\\
\nonumber
 &(i)_2& |\lambda|[\|u\|^2_2+\|\theta\|^2]    \leq   |\lambda|\|v\|^2 +C_\delta\|F\|_{\mathcal{H}_0}\|U\|_{\mathcal{H}_0} \quad {\rm for} \quad  0\leq \sigma\leq \dfrac{3}{2},\quad \\
\label{ItemiiLemma07}
&(ii) & |\lambda|\|u\|^2_2  \leq   C|\lambda|\|A^{1-\sigma}\theta\|^2+ C_\delta\|F\|_{\mathcal{H}_0}\|U\|_{\mathcal{H}_0} \quad {\rm for} \quad  1 \leq  \sigma \leq \frac{3}{2},
\\
\label{ItemiiiLemma07}
&(iii) & \|A^{\sigma-1}v\|^2 \leq  C_\delta\|F\|_{\mathcal{H}_0}\|U\|_{\mathcal{H}_0} \quad {\rm for} \quad 0\leq \sigma\leq \dfrac{3}{2},\\
\label{ItemivLemma07}
& (iv) & \|v\|^2_1  \leq   C_\delta\|F\|_{\mathcal{H}_0}\|U\|_{\mathcal{H}_0} \quad {\rm  for}\quad \sigma=1,\\
\label{ItemvLemma07}
& (v)&   \|A^\frac{2\sigma-1}{4}v\|^2  \leq  C_\delta\|F\|_{\mathcal{H}_0}\|U\|_{\mathcal{H}_0}\quad{\rm for}\quad 0\leq\sigma\leq\frac{3}{2},\\
\label{ItemviLemma07}
& (vi)& \|A^\frac{2\sigma+3}{4}u\|^2  \leq   C_\delta\|F\|_{\mathcal{H}_0}\|U\|_{\mathcal{H}_0}\quad{\rm for}\quad 1<\sigma\leq \frac{7}{6},
\\
\label{ItemviiLemma07}
&(vii)& \|A^\frac{5-2\sigma}{2}u\|^2  \leq   C_\delta\|F\|_{\mathcal{H}_0}\|U\|_{\mathcal{H}_0}\quad{\rm for}\quad \frac{7}{6}<\sigma < \frac{3}{2},\\
\label{itemviiiLemma07}
& (viii)&  \|A^\frac{8\sigma-3}{8}v\|^2 \leq   C_\delta\|F\|_{\mathcal{H}_0}\|U\|_{\mathcal{H}_0}\quad{\rm for}\quad \frac{1}{2}\leq\sigma \leq\frac{11}{8},\\
\label{ItemixLemma07}
&(ix)& \|A^\frac{2\sigma+1}{2}u\|^2 \leq   C_\delta\|F\|_{\mathcal{H}_0}\|U\|_{\mathcal{H}_0}\quad{\rm for}\quad \frac{1}{2}< \sigma < 1,\\
\label{ItemxLemma07}
& (x)& \|A^\frac{5-2\sigma}{2}u\|^2  \leq   C_\delta\|F\|_{\mathcal{H}_0}\|U\|_{\mathcal{H}_0}\quad{\rm for}\quad 1< \sigma \leq \frac{7}{6}.
\end{eqnarray}
\end{lem}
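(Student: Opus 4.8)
All ten bounds are a priori estimates for the stationary resolvent system \eqref{Pesp0-10}--\eqref{Pesp0-30} with $\omega=0$, and each one is obtained by the frequency-domain multiplier method: pair one of the three equations with a carefully chosen power $A^{r}u$, $A^{r}v$ or $A^{r}\theta$, use self-adjointness of the powers of $A$ to redistribute exponents, eliminate the surviving scalar factors of $\lambda$ by substituting the other two equations, and close with Cauchy--Schwarz, Young, the embeddings $\mathfrak D(A^{r_1})\hookrightarrow\mathfrak D(A^{r_2})$ $(r_1>r_2)$ and the interpolation inequality of Theorem \ref{Lions-Landau-Kolmogorov}. Fix $\delta>0$ and $|\lambda|>\delta$; abbreviate $P:=\|F\|_{\mathcal H_0}\|U\|_{\mathcal H_0}$ and use throughout that $\|\theta\|_1^2\le P$ by \eqref{Pdis0-10}, that $\|u\|_2,\|v\|,\|\theta\|\le\|U\|_{\mathcal H_0}$, and that $\theta\in\mathbf H^1$, so all pairings below make sense on the core $\mathfrak D(\mathbb A_0)$ (cf.\ \eqref{DomomegaZ}), from which one passes to the closure. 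For item (i) I would take the $\mathbf H$-inner product of \eqref{Pesp0-20} with $v$, rewrite $\langle A^2u,v\rangle=-i\lambda\|u\|_2^2-\langle A^2u,f\rangle$ via \eqref{Pesp0-10} and $\langle A^\sigma\theta,v\rangle=\langle\theta,A^\sigma v\rangle=\langle\theta,h\rangle+i\lambda\|\theta\|^2-\|\theta\|_1^2$ via \eqref{Pesp0-30}, and take imaginary parts; this gives $\lambda\bigl(\|v\|^2-\|u\|_2^2-\|\theta\|^2\bigr)=\text{Im}\bigl(\langle A^2u,f\rangle+\langle\theta,h\rangle+\langle g,v\rangle\bigr)$, whose right side is $\le CP$ in modulus, so $|\lambda|\,\bigl|\,\|v\|^2-\|u\|_2^2-\|\theta\|^2\,\bigr|\le CP$ and both $(i)_1$, $(i)_2$ follow at once.

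\noindent\emph{Items (iii), (v), (viii) (regularity of $v$).} For (iii), test \eqref{Pesp0-30} against $A^{\sigma-2}v$, so that $\langle A^\sigma v,A^{\sigma-2}v\rangle=\|A^{\sigma-1}v\|^2$; the term $\langle A\theta,A^{\sigma-2}v\rangle=\langle\theta,A^{\sigma-1}v\rangle$ is absorbed by Young, and $i\lambda\langle\theta,A^{\sigma-2}v\rangle$ is rewritten by substituting $i\lambda v=g-A^2u+A^\sigma\theta$ from \eqref{Pesp0-20} and then \emph{redistributing} (not re-substituting) the power in $\langle A^{\sigma-2}\theta,A^2u\rangle=\langle A^{\sigma-1}\theta,Au\rangle$; this last term is controlled by Young together with $\|u\|_2^2\le C_\delta P$ (Lemma \ref{Lemma001Exp}) when $\sigma\ge\tfrac12$, and for $\sigma<\tfrac12$ by writing it as $\langle\theta,A^\sigma u\rangle$ and passing through \eqref{Pesp0-10} and \eqref{Pesp0-30}. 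Items (v) and (viii) then follow by interpolating $\|A^{\alpha}v\|$, with $\alpha=\tfrac{2\sigma-1}{4}$ resp.\ $\alpha=\sigma-\tfrac38$, between the controlled $\|A^{\sigma-1}v\|$ (and $\|v\|$ when $\sigma\ge\tfrac12$) and a higher power of $v$ read off from $A^\sigma v=h-i\lambda\theta-A\theta$ using $\|\theta\|_1^2\le P$; the ranges $[0,\tfrac32]$, $[\tfrac12,\tfrac{11}{8}]$ are exactly where these interpolation exponents lie in $[0,1]$ and the resulting powers of $A$ stay admissible.

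\noindent\emph{Items (ii), (iv), (vi), (vii), (ix), (x).} For (ii) ($\sigma\ge1$) I would combine $\|u\|_2^2\le C_\delta P$ (Lemma \ref{Lemma001Exp}) with the identity $\|u\|_2^2=\|v\|^2+\langle v,f\rangle+\langle A^{\sigma-1}\theta,Au\rangle+\langle g,u\rangle$ (from \eqref{Pesp0-20}$\cdot u$ and \eqref{Pesp0-10}): multiply by $|\lambda|$ and treat the coupling term by Young after redistributing powers between $\theta$ and $u$, which produces the factor $|\lambda|\|A^{1-\sigma}\theta\|^2$. Item (iv) ($\sigma=1$) follows by interpolating $\|v\|_1$ between $\|v\|$ (Lemma \ref{Lemma001Exp}) and $\|A^{5/8}v\|$ (item (viii) at $\sigma=1$). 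Items (vi), (vii), (ix), (x) are the $u$-analogues of (v)/(viii): using $A^\sigma u+\theta=\tfrac1{i\lambda}\bigl(h+A^\sigma f-A\theta\bigr)$ (from \eqref{Pesp0-10}--\eqref{Pesp0-30}) to transfer the gained regularity from $\theta$ and $v$ onto $u$, one interpolates $\|A^{\alpha}u\|$ between $\|u\|_2$ and the new higher norm; the windows $(1,\tfrac76]$, $(\tfrac76,\tfrac32)$, $(\tfrac12,1)$ are precisely the $\sigma$-intervals on which the interpolation exponent sits in $[0,1]$ and the operator power does not exceed the regularity supplied by $\mathfrak D(\mathbb A_0)$ and by $\theta\in\mathbf H^1$.

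\noindent\emph{Main obstacle.} The conceptual content lies entirely in the energy identity of (i) and in the coupling identities; the work is the bookkeeping. For each item and each sub-interval of $\sigma$ one must choose a multiplier and an interpolation triple so that (a) every pairing is well defined, (b) the factors of $\lambda$ (and $\lambda^2$) generated when \eqref{Pesp0-10}/\eqref{Pesp0-20} is inserted either cancel or re-express themselves with no surviving power of $\lambda$ --- naively replacing $v=i\lambda u-f$ creates a $\lambda^2$, so one substitutes $i\lambda v$ from \eqref{Pesp0-20} instead and must check that this does not merely reproduce a tautology --- and (c) all error terms absorb into the left-hand side; the borderline exponents $\tfrac76$ and $\tfrac{11}{8}$ are forced by requirements (a) and (c) and have to be monitored at every step.
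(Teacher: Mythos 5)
Your overall strategy (frequency-domain multipliers plus interpolation) is the paper's, and your treatments of items (i) and (iii) reproduce the paper's identities; your route to (iv), interpolating $\|A^{1/2}v\|$ between $\|v\|$ and $\|A^{5/8}v\|$ (item (viii) at $\sigma=1$), differs from the paper's (which adds \eqref{Pesp0-20} and \eqref{Pesp0-30} and pairs the sum with $v$ and with $\theta$) but is valid once (viii) is in hand. There are, however, two genuine gaps. The first is item (ii): multiplying the identity $\|u\|_2^2=\|v\|^2+\langle v,f\rangle+\langle A^{\sigma-1}\theta,Au\rangle+\langle g,u\rangle$ by $|\lambda|$ leaves $|\lambda|\|v\|^2$ on the right-hand side, and for $\sigma\ge1$ this is controlled neither by $C_\delta\|F\|_{\mathcal H_0}\|U\|_{\mathcal H_0}$ nor by $C|\lambda|\|A^{1-\sigma}\theta\|^2$ (indeed $|\lambda|\|v\|^2$ is one of the quantities the whole section is trying to bound), so the estimate cannot close; combining with $\|u\|_2^2\le C_\delta\|F\|_{\mathcal H_0}\|U\|_{\mathcal H_0}$ does not manufacture the extra factor $|\lambda|$ either. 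The paper avoids $\|v\|^2$ altogether: it pairs \eqref{Pesp0-10} with $A^2u$, writes $\langle v,A^2u\rangle=\langle A^\sigma v,A^{2-\sigma}u\rangle$ and substitutes $A^\sigma v=h-i\lambda\theta-A\theta$ from \eqref{Pesp0-30}; the resulting $-i\lambda\langle A^{1-\sigma}\theta,Au\rangle$ gives $C_\varepsilon|\lambda|\|A^{1-\sigma}\theta\|^2+\varepsilon|\lambda|\|u\|_2^2$ by Young with weights $\sqrt{|\lambda|}$, and the leftover $\langle A^\sigma\theta,A^{3-2\sigma}u\rangle$ is removed by a second pairing of \eqref{Pesp0-20} with $A^{3-2\sigma}u$ before taking imaginary parts.

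The second gap concerns items (v)--(x): the interpolation endpoints you invoke are not available. For (v) and (viii) you would interpolate between $\|A^{\sigma-1}v\|$ and ``a higher power of $v$ read off from $A^\sigma v=h-i\lambda\theta-A\theta$''; but bounding $\|A^\sigma v\|$ from that relation requires $|\lambda|\|\theta\|$ and $\|A\theta\|$, neither of which is controlled (only $\|\theta\|_1^2\le \|F\|_{\mathcal H_0}\|U\|_{\mathcal H_0}$ is), and since $\frac{2\sigma-1}{4}-(\sigma-1)=\frac{3-2\sigma}{4}\ge0$ and $\frac{8\sigma-3}{8}-(\sigma-1)=\frac58>0$, both target norms sit strictly above what (iii) supplies, so (iii) plus any lower norm cannot reach them. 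Likewise (vi) and (ix) assert control of $\|A^ru\|$ with $r>1$, which cannot be obtained by interpolating below $\|u\|_2=\|Au\|$, and your transfer relation $A^\sigma u+\theta=\frac{1}{i\lambda}(h+A^\sigma f-A\theta)$ again involves the uncontrolled $\|A\theta\|$. The mechanism the paper uses is different and is the essential content of these items: pair \eqref{Pesp0-30} with $A^{-1/2}v$ (resp.\ $A^{-3/8}v$) and substitute $i\lambda v=-A^2u+A^\sigma\theta+g$ from \eqref{Pesp0-20} to prove (v) (resp.\ (viii)) directly; then pair \eqref{Pesp0-20} with $A^{\frac{2\sigma-1}{2}}u$, $A^{2\sigma-1}u$ or $A^{3-2\sigma}u$ and substitute $i\lambda u=v+f$, so that the higher norms of $v$ just established are converted into the higher norms of $u$ in (vi), (ix), (vii), (x). Without these explicit multipliers the borderline exponents $\frac76$ and $\frac{11}{8}$ never arise and the items are not proved.
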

\begin{proof}{\bf item (i)$_1$ and (i)$_2$;} taking the duality product between  equation\eqref{Pesp0-20} and $\lambda u$ and using the equations \eqref{Pesp0-10} and \eqref{Pesp0-30},  taking advantage 
of the self-adjointness of the powers of the operator $A$, we obtain
\begin{eqnarray}
\nonumber
\lambda\|u\|^2_2 & =& \lambda\|v\|^2+\dual{iA^2 u-iA^\sigma\theta-ig}{f}
+\dual{A^\sigma\theta}{-iv-if}+\dual{g}{-if-iv}\\
\nonumber
&= & \lambda\|v\|^2+i\dual{Au}{Af}-i\dual{\theta}{A^\sigma f}-i\dual{g}{f}+i\dual{\theta}{A^\sigma v} \\
\label{Exp007}
& & 
 +i\dual{\theta}{A^\sigma f}+i\dual{g}{f}+i\dual{g}{v}\quad{\rm for}\qquad 0\leq\sigma\leq\dfrac{3}{2}.
\end{eqnarray}
On the other hand,  taking the duality product between  equation\eqref{Pesp0-30} and $A^{-1}(\lambda\theta)$ and using the equation \eqref{Pesp0-20},  taking advantage 
of the self-adjointness of the powers of the operator $A$, we obtain
\begin{eqnarray}
\label{Exp007A}
\lambda\|\theta\|^2=-i\lambda^2\|A^{-\frac{1}{2}}\theta\|^2-\dual{A^{\sigma-1}v}{\lambda\theta}+\dual{h}{A^{-1}(\lambda\theta)},
\end{eqnarray}
from:
\begin{eqnarray}
\label{Exp007B}
-\dual{A^{\sigma-1}v}{\lambda\theta} &= &i\dual{A^\sigma v}{\theta}+i\|A^\frac{2\sigma-1}{2}v\|^2-i\dual{A^{\sigma-1}v}{h}\\
\label{Exp007C}
\dual{h}{A^{-1}(\lambda\theta)} & = & -i\dual{h}{\theta}-i\dual{h}{A^{\sigma-1}v}+i\|A^{-\frac{1}{2}}h\|^2.
\end{eqnarray}
Adding \eqref{Exp007} with \eqref{Exp007A} and in the result using the identities \eqref{Exp007B} and \eqref{Exp007C}, we get
\begin{multline}\label{Exp007D}
\lambda[\|u\|^2_2+\|\theta\|^2] =\lambda\|v\|^2+i\dual{Au}{Af}+i\dual{g}{v}+i\dual{\theta}{A^\sigma v}-i\lambda^2\|A^{-\frac{1}{2}}\theta\|^2\\
+i\dual{A^\sigma v}{\theta}+i\|A^\frac{2\sigma-1}{2}v\|^2-i\dual{A^{\sigma-1}v}{h}\\  
 -i\dual{h}{\theta}-i\dual{h}{A^{\sigma-1}v}+i\|A^{-\frac{1}{2}}h\|^2.
\end{multline}
From the identities $-i\big[\dual{A^{\sigma-1}v}{h}+\dual{h}{A^{\sigma-1}v}\big ]=-i2{\rm Re}\dual{A^{\sigma-1}v}{h}$ and $i[\dual{\theta}{A^\sigma v}+\dual{A^\sigma v}{\theta}]=i2{\rm Re}\dual{\theta}{A^\sigma v}$,  taking real part of \eqref{Exp007D},  we have
\begin{equation}\label{Exp007E}
\hspace*{-0.2cm}\lambda\|v\|^2 =\lambda[\|u\|^2_2+\|\theta\|^2]+{\rm Im}\{ \dual{Au}{Af}
+\dual{g}{v} -\dual{h}{\theta}\}\quad {\rm for}\quad 0\leq\sigma\leq \dfrac{3}{2}.
\end{equation}
Applying Cauchy-Schwarz and Young inequalities and norms $\|F\|_{\mathcal{H}_0}$ and $\|U\|_{\mathcal{H}_0}$,  we finish proving of item (i)$_1$ and (i)$_2$  this  lemma.
\\
{\bf Proof: item (ii);} Consider $\sigma\geq 1$.  Taking the duality product between  equation \eqref{Pesp0-10} and $A^2u$,  using \eqref{Pesp0-30}  and taking advantage 
of the self-adjointness of the powers of the operator $A$, 
 we get
\begin{eqnarray}
\nonumber
i\lambda\|u\|_ 2^2 \hspace*{-0.3cm}& =&\hspace*{-0.3cm} \dual{A^\sigma v}{A^{2-\sigma} u}+\dual{Au}{Af}=\dual{-i\lambda\theta-A\theta+h}{A^{2-\sigma}u}+\dual{Au}{Af}\\
\label{2Exp007I01}
&=&\hspace*{-0.3cm}-i\dual{\sqrt{|\lambda|}\theta}{\dfrac{\lambda}{\sqrt{|\lambda|}}A^{2-\sigma}u}-\dual{A^\sigma\theta}{A^{3-2\sigma} u}+\dual{h}{A^{2-\sigma}u}+ \dual{Au}{Af}.
\end{eqnarray}

On the other hand,   taking the duality product between  equation \eqref{Pesp0-20} and $A^{3-2\sigma}u$,  using \eqref{Pesp0-10} and taking advantage 
of the self-adjointness of the powers of the operator $A$, 
 we get
\begin{eqnarray}
\nonumber
\dual{A^\sigma\theta}{A^{3-2\sigma}u} &= &\|A^\frac{5-2\sigma}{2}u\|^2-\dual{v}{A^{3-2\sigma}(i\lambda u)}-\dual{g}{A^{3-2\sigma}u}\\
\label{2Exp007I02}
&=& \|A^\frac{5-2\sigma}{2}u\|^2-\|A^\frac{3-2\sigma}{2}v\|^2
+\dual{v}{A^{3-2\sigma}f} -\dual{g}{A^{3-2\sigma}u}.
\end{eqnarray}
Using \eqref{2Exp007I02} in \eqref{2Exp007I01}  for $1\leq\sigma\leq \frac{3}{2}$,     we get
\begin{eqnarray}
\nonumber
i\lambda\|u\|_ 2^2 \hspace*{-0.3cm}& =&\hspace*{-0.3cm}-i\dual{\sqrt{|\lambda|}\theta}{\dfrac{\lambda}{\sqrt{|\lambda|}}A^{2-\sigma}u}+\dual{h}{A^{2-\sigma}u}+ \dual{Au}{Af}- \|A^\frac{5-2\sigma}{2}u\|^2\\
\label{2Exp007Ib2}
& & +\|A^\frac{3-2\sigma}{2}v\|^2
-\dual{v}{A^{3-2\sigma}f} +\dual{g}{A^{3-2\sigma}u}.
\end{eqnarray}
Taking imaginary part in \eqref{2Exp007Ib2},  we get
\begin{eqnarray}
\nonumber
\lambda\|u\|_ 2^2 \hspace*{-0.3cm}& =&\hspace*{-0.3cm}{\rm Im}\{-i\dual{\sqrt{|\lambda|}A^{1-\sigma}\theta}{\dfrac{\lambda}{\sqrt{|\lambda|}}Au}+\dual{h}{A^{2-\sigma}u}+ \dual{Au}{Af}\\
\label{2Exp007Ic2}
& & 
-\dual{v}{A^{3-2\sigma}f} +\dual{g}{A^{3-2\sigma}u}\}.
\end{eqnarray}

Considering that $1\leq \sigma\leq \frac{3}{2}$, we have
$
 \frac{1}{2}\leq 2-\sigma\leq 1$ and  $ 0\leq 3-2\sigma\leq 1$.
 The continuous embedding  $\mathfrak{D}(A^{r_1})\hookrightarrow \mathfrak{D}(A^{r_2}),  \; r_1>r_2$,   for $\varepsilon>0$  exists $C_\varepsilon>0$ which does not depend on $\lambda$  such that
 \begin{eqnarray}\label{2Exp007Id2}
\hspace*{-0.7cm}|\lambda|\|u\|^2_2 \hspace*{-0.3cm}& \leq &  \hspace*{-0.3cm}C_\varepsilon |\lambda| \|A^{1-\sigma}\theta\|^2+\epsilon|\lambda| \|u\|^2_2+\|h\|\|u\|_2+
\|u\|_2\|f\|_2 \\
\nonumber
 &  & +\|v\|\|f\|_2+\|g\|\|u\|_2.
 \end{eqnarray}
Of the estimates \eqref{GevreyC1Eq001},  norm $\|F\|_{\mathcal{H}_0}$ and $\|U\|_{\mathcal{H}_0}$,  we finish proving of item $(ii)$ this  lemma.
\\
{\bf Proof: item (iii);} taking the duality product between equation \eqref{Pesp0-30}  and $ A^{\sigma-2}v$,  using the equation \eqref{Pesp0-20},  taking advantage of the self-adjointness of the powers of the operator $A$,  we obtain
\begin{eqnarray}
\label{Eq01Lemma2Exp0}
\|A^{\sigma-1}v\|^2\hspace*{-0.3cm}  & = &\hspace*{-0.3cm}  \dual{A^{\sigma-2}\theta}{i\lambda v}-\dual{A^\frac{1}{2}\theta}{A^{\sigma-\frac{3}{2}}v}+\dual{h}{A^{\sigma-2}v}\\
\nonumber
&=&\hspace*{-0.3cm} -\dual{A^\frac{1}{2}\theta}{A^{\sigma-\frac{1}{2}}u}+\|A^{\sigma-1}\theta\|^2+\dual{A^{\sigma-2}\theta}{g}-\dual{A^\frac{1}{2}\theta}{A^{\sigma-\frac{3}{2}}v}\\
\nonumber
& & +\dual{h}{A^{\sigma-2}v}.
\end{eqnarray}
As $\sigma\leq\frac{3}{2}$, then  $\sigma-\frac{1}{2}\leq 1$, $\sigma-2\leq\sigma-1\leq\frac{1}{2}$ and $\sigma-\frac{3}{2}\leq 0$,  therefore using the continuous embedding  $\mathfrak{D}(A^{r_1})\hookrightarrow \mathfrak{D}(A^{r_2}),  \; r_1>r_2$,   in \eqref{Eq01Lemma2Exp0},  we finish the proof of item $(iii)$ of this lemma.
\\
{\bf Proof: item (iv);}   for  $\sigma=1$,   adding the equations \eqref{Pesp0-20} and \eqref{Pesp0-30}  and in the sequence we carry out the duality product by $v$ and then by $\theta$,  we get
\begin{eqnarray}
\label{Eq01Itemiv}
i\lambda\|v\|^2+i\lambda\dual{\theta}{v}+\dual{A^2 u}{v}+\|v\|_1^2=\dual{g}{v}+\dual{h}{v}\\
\label{Eq02Itemiv}
i\lambda\|\theta\|^2+i\lambda\dual{v}{\theta}+\dual{A^2u}{\theta}+\dual{Av}{\theta}=\dual{g}{\theta}+\dual{h}{\theta}
\end{eqnarray}
of identity $i\lambda[\dual{\theta}{v}+\dual{v}{\theta}]=i\lambda 2{\rm Re}\dual{\theta}{v}$   and taking the real part to the sum of the identities \eqref{Eq01Itemiv} and \eqref{Eq02Itemiv}, we have
\begin{multline}
\label{Eq03Itemiv}
\|v\|_1^2={\rm Re}\{-\dual{A^2u}{v}-\dual{A^2u}{\theta}-\dual{A^\frac{1}{2} v}{A^\frac{1}{2}\theta}
+\dual{g}{v}+\dual{h}{v}\\
+\dual{g}{\theta}+\dual{h}{\theta}\}.
\end{multline}
from \eqref{Pesp0-10} and \eqref{Pesp0-20}, we have
\begin{multline}\label{Eq04Itemiv}
\dual{A^2u}{v}=\dual{A^2u}{i\lambda u-f}=-i\lambda\|Au\|^2-\dual{Au}{Af}\\
\dual{A^2u}{\theta}=\dual{-i\lambda u+A\theta+g}{\theta}=\dual{-v-f}{\theta}+\|A^\frac{1}{2}\theta\|^2+ \dual{g}{\theta}
\end{multline}
Using \eqref{Eq04Itemiv} in \eqref{Eq03Itemiv},  we arrived
\begin{multline}\label{Eq05Itemiv}
\|v\|_1^2= {\rm Re}\{ \dual{Au}{Af}+\dual{v}{\theta}+\dual{f}{\theta}-\|A^\frac{1}{2}\theta\|^2\\
-\dual{A^\frac{1}{2}v}{A^\frac{1}{2}\theta}+\dual{g}{v}+\dual{h}{v}+\dual{h}{\theta}\}.
\end{multline}
Using estimates \eqref{Pdis0-10} and \eqref{GevreyC1Eq001},   norms $\|F\|_{\mathcal{H}_0}$,  $\|U\|_{\mathcal{H}_0}$ for $\varepsilon>0$  exists $C_\varepsilon>0$  such that
\begin{equation}\label{Eq06Itemiv}
\|v\|_1^2\leq C_\delta\|F\|_{\mathcal{H}_0}\|U\|_{\mathcal{H}_0}+\varepsilon\|v\|_1^2+C_\varepsilon\|\theta\|^2_1.
\end{equation}
Therefore the proof of the item $(iv)$ of this lemma is finished.
\\
{\bf Proof: item (v);}   taking the duality product between \eqref{Pesp0-30} by  $A^{-\frac{1}{2}}v$ and using \eqref{Pesp0-10},  we get
\begin{eqnarray*}
\|A^\frac{2\sigma-1}{4}v\|^2&=&\dual{A^{-\frac{1}{2}}\theta}{i\lambda v}-\dual{A^\frac{1}{2}\theta}{v}+\dual{h}{A^{-\frac{1}{2}}v}\\
&= & \dual{A^{-\frac{1}{2}}\theta}{-A^2u+A^\sigma \theta+g}-\dual{A^\frac{1}{2}\theta}{v}+\dual{h}{A^{-\frac{1}{2}}v}\\
&=&-\dual{A^\frac{1}{2}\theta}{Au}+\|A^\frac{2\sigma-1}{4}\theta\|^2+\dual{A^{-\frac{1}{2}}\theta}{g}-\dual{A^\frac{1}{2}\theta}{v}+\dual{h}{A^{-\frac{1}{2}}v}
\end{eqnarray*}
then,  as for $\sigma\leq\frac{3}{2}$, we have $\frac{2\sigma-1}{4}\leq \frac{1}{2},$    for $\varepsilon>0$  exists $C_\varepsilon>0,$  such that 
\begin{equation}\label{Eq001Itemiv}
\|A^\frac{2\sigma-1}{4}v\|^2\leq C_\delta \|F\|_{\mathcal{H}_0}\|U\|_{\mathcal{H}_0}+\varepsilon[\|v\|^2+\|u\|_2^2]+C_\varepsilon\|A^\frac{1}{2}\theta\|^2\quad{\rm for}\quad 0\leq\sigma\leq\dfrac{3}{2}.
\end{equation}
Therefore from estimates \eqref{Pdis0-10} and \eqref{GevreyC1Eq001},     the proof of the item $(v)$ of this lemma is finished.
\\
{\bf Proof: item (vi);}   taking the duality product between \eqref{Pesp0-20}  and   $A^
\frac{2\sigma-1}{2}u$ and using \eqref{Pesp0-10},  we get
\begin{eqnarray*}
\|A^\frac{2\sigma+3}{4} u\|^2 &= & \dual{v}{A^\frac{2\sigma-1}{2}(i\lambda u)}+\dual{A^\frac{1}{2}\theta}{A^{2\sigma-1} u}+\dual{g}{A^\frac{2\sigma-1}{2}u}\\
&=& \|A^\frac{2\sigma-1}{4}v\|^2+\dual{v}{A^\frac{2\sigma-1}{2}f}+\dual{A^\frac{1}{2}\theta}{A^{2\sigma-1}u}+\dual{g}{A^\frac{2\sigma-1}{2}u},
\end{eqnarray*}
then,  as  for $1<\sigma\leq\frac{7}{6}$,  we have: $2\sigma-1\leq\frac{2\sigma+3}{4}$ and $\frac{2\sigma-1}{2}\leq 1$,   using estimates \eqref{Pdis0-10},  \eqref{ItemvLemma07},    for $\varepsilon>0$ exist $C_\varepsilon>0$  such that
\begin{equation}\label{Eq002Itemiv}
\|A^\frac{2\sigma+3}{4} u\|^2\leq C_\delta\|F\|_{\mathcal{H}_0}\|U\|_{\mathcal{H}_0}+\varepsilon\|A^\frac{2\sigma+3}{4}u\|^2+C_\varepsilon\|A^\frac{1}{2}\theta\|^2\quad {\rm  for}\quad \frac{1}{2}<\sigma< 1
\end{equation}
or $1<\sigma\leq\frac{7}{6}$.  The proof of the item $(vi)$ of this lemma is finished.
\\
{\bf Proof: item (vii);}   taking the duality product between \eqref{Pesp0-20}  and   $A^{3-2\sigma}u$ and using \eqref{Pesp0-10},  we get
\begin{eqnarray*}
\|A^\frac{5-2\sigma}{2} u\|^2\hspace*{-0.3cm} &= &\hspace*{-0.3cm}  \dual{v}{A^{3-2\sigma}(i\lambda u)}+\dual{A^\frac{1}{2}\theta}{A^\frac{5-2\sigma}{2}u}+\dual{g}{A^{3-2\sigma}u}\\
&=&\hspace*{-0.3cm}  \|A^\frac{3-2\sigma}{2}v\|^2+\dual{v}{A^{3-2\sigma}f}+\dual{A^\frac{1}{2}\theta}{A^\frac{5-2\sigma}{2}u}+\dual{g}{A^{3-2\sigma}u},
\end{eqnarray*}
then,  as  for  $\frac{7}{6}< \sigma < \frac{3}{2}$  we have:   $\frac{3-2\sigma}{2}\leq\frac{2\sigma-1}{4}$,     the continuous embedding $\mathfrak{D}(A^{r_1}) \hookrightarrow\mathfrak{D}(A^{r_2})$,  $r_1>r_2$  and using the inequalities Cauchy-Schwarz and Young,  for $\varepsilon>0$ exist $C_\varepsilon>0$  such that
\begin{multline}\label{Eq001Itemvii}
\|A^\frac{5-2\sigma}{2} u\|^2\leq C_\delta\|F\|_{\mathcal{H}_0}\|U\|_{\mathcal{H}_0}+\varepsilon\|A^\frac{5-2\sigma}{2}u\|^2+\|A^\frac{2\sigma-1}{4}v\|^2\\
+C_\varepsilon\|A^\frac{1}{2}\theta\|^2\quad {\rm  for}\quad \frac{7}{6}<\sigma < \dfrac{3}{2},
\end{multline}
using estimates \eqref{Pdis0-10},  \eqref{ItemvLemma07},  the proof of the item $(vii)$ of this lemma is finished.
\\
{\bf Proof: item (viii);}   taking the duality product between \eqref{Pesp0-30}  and   $A^{-\frac{3}{8}}v$ and using \eqref{Pesp0-20},  we get
\begin{eqnarray*}
\|A^\frac{8\sigma-3}{8} v\|^2\hspace*{-0.3cm} &= &\hspace*{-0.3cm} \dual{A^{-\frac{3}{8}}\theta}{i\lambda v}-\dual{A^\frac{1}{2}\theta}{A^\frac{1}{8}v}+\dual{h}{A^{-\frac{3}{8}}v}\\
&=& \hspace*{-0.3cm}- \dual{A\theta}{A^\frac{5}{8}u}+\|A^\frac{8\sigma-3}{16}\theta\|^2+\dual{A^{-\frac{3}{8}}\theta}{g}-\dual{A^\frac{1}{2}\theta}{A^\frac{1}{8}v}+\dual{h}{A^{-\frac{3}{8}}v}\\
&= &\hspace*{-0.3cm} \dual{\theta}{A^\frac{5}{8}f}-\dual{A^\frac{8\sigma-3}{8}v}{Au}+\dual{h}{A^\frac{5}{8}u} +\|A^\frac{8\sigma-3}{16}\theta\|^2\\
& & +\dual{A^{-\frac{3}{8}}\theta}{g}+\dual{h}{A^{-\frac{3}{8}}v},
\end{eqnarray*}
then,  as  for  $\frac{1}{2}\leq \sigma \leq  \frac{11}{8}$,  we have:   $\frac{8\sigma-3}{16}\leq\frac{1}{2}$,     the continuous embedding $\mathfrak{D}(A^{r_1}) \hookrightarrow\mathfrak{D}(A^{r_2})$,  $r_1>r_2$  and using the inequalities Cauchy-Schwarz and Young,  for $\varepsilon>0$ exist $C_\varepsilon>0$  such that
\begin{multline}\label{Eq001Itemviii}
\|A^\frac{8\sigma-3}{8} v\|^2\leq C_\delta\|F\|_{\mathcal{H}_0}\|U\|_{\mathcal{H}_0}+\varepsilon\|A^\frac{8\sigma-3}{8}v\|^2+C_\varepsilon\|u\|_2^2\\
+C\|A^\frac{1}{2}\theta\|^2\quad {\rm  for}\quad \frac{1}{2}\leq \sigma \leq  \dfrac{11}{8},
\end{multline}
using estimates \eqref{Pdis0-10}  and \eqref{GevreyC1Eq001},  the proof of the item $(viii)$ of this lemma is finished.
\\
{\bf Proof: item (ix);}   taking the duality product between \eqref{Pesp0-20}  and   $A^{2\sigma-1}u$ and using \eqref{Pesp0-10},  we get
\begin{eqnarray*}
\|A^\frac{2\sigma+1}{2} u\|^2 &= & \dual{v}{A^{2\sigma-1}(i\lambda u)}+\dual{A^\frac{1}{2}\theta}{A^{3\sigma-\frac{3}{2}}u}+\dual{g}{A^{2\sigma-1}u}\\
&=& \|A^\frac{2\sigma-1}{2}v\|^2+\dual{v}{A^{2\sigma-1}f}+\dual{A^\frac{1}{2}\theta}{A^{3\sigma-\frac{3}{2}}u}+\dual{g}{A^{2\sigma-1}u},
\end{eqnarray*}
then,  as  for $\frac{1}{2}<\sigma <1$,  we have: $3\sigma-\frac{3}{2}\leq 1\leq\frac{2\sigma+1}{2}$, $\frac{2\sigma-1}{2}\leq \frac{8\sigma-3}{8}$ and $2\sigma-1\leq 1$,  using estimates \eqref{Pdis0-10},  \eqref{ItemviiLemma07},    for $\varepsilon>0$ exist $C_\varepsilon>0$  such that
\begin{equation}\label{Eq002Itemix}
\|A^\frac{2\sigma+1}{2} u\|^2\leq C_\delta\|F\|_{\mathcal{H}_0}\|U\|_{\mathcal{H}_0}+\varepsilon\|A^\frac{2\sigma+1}{2}u\|^2+C_\varepsilon\|A^\frac{1}{2}\theta\|^2\quad {\rm  for}\quad \frac{1}{2}<\sigma<1.
\end{equation}
The proof of this lemma's item $(ix)$ is finished.
\\
{\bf Proof: item (x);}   taking the duality product between \eqref{Pesp0-20}  and   $A^{3-2\sigma}u$ and using \eqref{Pesp0-10},  we get
\begin{eqnarray*}
\|A^\frac{5-2\sigma}{2} u\|^2 &= & \dual{v}{A^{3-2\sigma}(i\lambda u)}+\dual{A^\frac{1}{2}\theta}{A^\frac{5-2\sigma}{2}u}+\dual{g}{A^{3-2\sigma}u}\\
&=& \|A^\frac{3-2\sigma}{2}v\|^2+\dual{v}{A^{3-2\sigma}f}+\dual{A^\frac{1}{2}\theta}{A^\frac{5-2\sigma}{2}u}+\dual{g}{A^{3-2\sigma}u},
\end{eqnarray*}
then,  as  for  $1< \sigma \leq \frac{7}{6}$  we have:   $\frac{3-2\sigma}{2}\leq\frac{8\sigma-3}{8}$,     the continuous embedding $\mathfrak{D}(A^{r_1}) \hookrightarrow\mathfrak{D}(A^{r_2})$,  $r_1>r_2$  and using the inequalities Cauchy-Schwarz and Young,  for $\varepsilon>0$ exist $C_\varepsilon>0$  such that
\begin{multline}\label{Eq001Itemx}
\|A^\frac{5-2\sigma}{2} u\|^2\leq C_\delta\|F\|_{\mathcal{H}_0}\|U\|_{\mathcal{H}_0}+\varepsilon\|A^\frac{5-2\sigma}{2}u\|^2+\|A^\frac{8\sigma-3}{8} v\|^2\\
+C_\varepsilon\|A^\frac{1}{2}\theta\|^2\quad {\rm  for}\quad 1<\sigma \leq \dfrac{7}{6},
\end{multline}
using estimates \eqref{Pdis0-10},  \eqref{itemviiiLemma07},   the proof of the item $(x)$ of this lemma is finished.

\end{proof}
\begin{lem}\label{Lemma7A}
Let $\delta > 0$. There exists $C_\delta > 0$ such that the solutions of the system \eqref{Eq1.1}  for $|\lambda|>\delta$ and $\omega=0$,  satisfy
\begin{eqnarray}
\label{ItemiLemma7A}
 \|A^\frac{1}{2}v\|^2  & \leq &  C_\delta\|F\|_{\mathcal{H}_0}\|U\|_{\mathcal{H}_0} \quad {\rm for} \quad  \dfrac{7}{6}<\sigma <\dfrac{3}{2}.
\end{eqnarray}
\end{lem}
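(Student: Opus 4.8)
The plan is to generate $\|A^{1/2}v\|^2$ from the temperature equation and then dispose of the resulting terms using the dissipation bound \eqref{Pdis0-10}, the second equation \eqref{Pesp0-20}, and items \eqref{ItemvLemma07} and \eqref{ItemviiLemma07} of Lemma \ref{Lemma002Exp0}. First I would take the duality product of \eqref{Pesp0-30} with $A^{1-\sigma}v$; since $\langle A^\sigma v,A^{1-\sigma}v\rangle=\|A^{1/2}v\|^2$ by self-adjointness of the powers of $A$, this isolates
\begin{equation*}
\|A^{1/2}v\|^2=i\lambda\,\langle\theta,A^{1-\sigma}v\rangle-\langle A^{1/2}\theta,A^{3/2-\sigma}v\rangle+\langle h,A^{1-\sigma}v\rangle .
\end{equation*}
Then, in the first term, I would replace $i\lambda v$ by $g-A^2u+A^\sigma\theta$ using \eqref{Pesp0-20}, which (up to the usual sign and conjugation bookkeeping) turns $i\lambda\langle\theta,A^{1-\sigma}v\rangle$ into
\begin{equation*}
\langle A^{1-\sigma}\theta,g\rangle-\langle A^{1/2}\theta,A^{5/2-\sigma}u\rangle+\|A^{1/2}\theta\|^2 .
\end{equation*}

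It then remains to estimate the five resulting terms. Since $\sigma>1$ we have $1-\sigma<0$, so $A^{1-\sigma}$ is bounded and $|\langle A^{1-\sigma}\theta,g\rangle|\le C\|\theta\|\,\|g\|\le C\|U\|_{\mathcal{H}_0}\|F\|_{\mathcal{H}_0}$ and, likewise, $|\langle h,A^{1-\sigma}v\rangle|\le C\|h\|\,\|v\|\le C\|F\|_{\mathcal{H}_0}\|U\|_{\mathcal{H}_0}$; the term $\|A^{1/2}\theta\|^2=\|\theta\|_1^2$ is precisely \eqref{Pdis0-10}. For $\langle A^{1/2}\theta,A^{5/2-\sigma}u\rangle$ I would note that $A^{5/2-\sigma}u=A^{(5-2\sigma)/2}u$ and combine \eqref{Pdis0-10} with item \eqref{ItemviiLemma07}, getting a bound $\le C\|\theta\|_1\,\|A^{(5-2\sigma)/2}u\|\le C_\delta\|F\|_{\mathcal{H}_0}\|U\|_{\mathcal{H}_0}$. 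For the last term $\langle A^{1/2}\theta,A^{3/2-\sigma}v\rangle$ I would use the continuous embedding $\mathfrak{D}(A^{(2\sigma-1)/4})\hookrightarrow\mathfrak{D}(A^{3/2-\sigma})$ --- valid since $\frac{3}{2}-\sigma\le\frac{2\sigma-1}{4}$ exactly when $\sigma\ge\frac{7}{6}$ --- so that $\|A^{3/2-\sigma}v\|\le C\|A^{(2\sigma-1)/4}v\|$, and then item \eqref{ItemvLemma07} together with \eqref{Pdis0-10} gives $|\langle A^{1/2}\theta,A^{3/2-\sigma}v\rangle|\le C_\delta\|F\|_{\mathcal{H}_0}\|U\|_{\mathcal{H}_0}$. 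Adding the five contributions yields the claim; no absorption argument is required, since none of the right-hand terms reproduces $\|A^{1/2}v\|^2$.

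The only genuinely delicate point is the bookkeeping of the fractional exponents, and it is exactly what forces the hypothesis $\frac{7}{6}<\sigma<\frac{3}{2}$: the contribution of $A^2u$ has to be split as $A^{(5-2\sigma)/2}u$ so as to be matched by item \eqref{ItemviiLemma07}, which is only available on $(\frac{7}{6},\frac{3}{2})$, and the contribution of $A\theta$ produces $A^{3/2-\sigma}v$, whose norm is controlled by item \eqref{ItemvLemma07} only after the embedding above, again valid only for $\sigma\ge\frac{7}{6}$. Everything else is Cauchy--Schwarz and the elementary estimate $ab\le\frac{1}{2}(a^2+b^2)$ applied to products of the shape $\sqrt{\|F\|_{\mathcal{H}_0}\|U\|_{\mathcal{H}_0}}\cdot\sqrt{C_\delta\|F\|_{\mathcal{H}_0}\|U\|_{\mathcal{H}_0}}$.
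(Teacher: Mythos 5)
Your proposal is correct and follows essentially the same route as the paper: the same multiplier $A^{1-\sigma}v$ applied to \eqref{Pesp0-30}, the same substitution of $i\lambda v$ via \eqref{Pesp0-20}, and the same reliance on item \eqref{ItemviiLemma07} for the $A^{\frac{5-2\sigma}{2}}u$ term. The only (harmless) difference is in the term $\dual{A^{1/2}\theta}{A^{\frac{3-2\sigma}{2}}v}$: the paper bounds $\|A^{\frac{3-2\sigma}{2}}v\|\le C\|A^{1/2}v\|$ and absorbs an $\varepsilon\|A^{1/2}v\|^2$ into the left-hand side, whereas you route it through the embedding into $\mathfrak{D}(A^{\frac{2\sigma-1}{4}})$ and item \eqref{ItemvLemma07}, which avoids absorption; both work on $(\tfrac{7}{6},\tfrac{3}{2})$.
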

\begin{proof}
Taking the duality product between \eqref{Pesp0-30}  and   $A^{1-\sigma}v$ and using \eqref{Pesp0-20},  we get
\begin{eqnarray*}
\|A^\frac{1}{2} v\|^2 &= & \dual{A^{1-\sigma}\theta}{i\lambda v}-\dual{A^\frac{1}{2}\theta}{A^\frac{3-2\sigma}{2}v}+\dual{h}{A^{1-\sigma}v}\\
&=&-\dual{A^\frac{1}{2}\theta}{A^\frac{5-2\sigma}{2}u}+\|A^\frac{1}{2}\theta\|^2+\dual{A^{1-\sigma}\theta}{g}-\dual{A^\frac{1}{2}\theta}{A^\frac{3-2\sigma}{2}v}\\
& & +\dual{h}{A^{1-\sigma}v},
\end{eqnarray*}
then,  as    $\frac{7}{6}< \sigma <  \frac{3}{2}$,  we have: $1-\sigma\leq 0$,   $\frac{3-2\sigma}{2}\leq\frac{1}{2}$,     the continuous embedding $\mathfrak{D}(A^{r_1}) \hookrightarrow\mathfrak{D}(A^{r_2})$,  $r_1>r_2$  and using the inequalities Cauchy-Schwarz and Young,  for $\varepsilon>0$ exists  $C_\varepsilon>0$,   such that
\begin{multline}\label{Eq01Itemi}
\hspace*{-0.4cm}\|A^\frac{1}{2} v\|^2\leq C\{ \|A^\frac{5-2\sigma}{2}u\|^2+\|\theta\|\|g\|+\varepsilon\|A^\frac{1}{2}v\|^2+C_\varepsilon\|\theta\|_1^2\\
+\|h\|\|v\|\} \quad {\rm  for}\quad \frac{7}{6}< \sigma < \dfrac{3}{2},
\end{multline}
using estimates \eqref{Pdis0-10},  \eqref{GevreyC1Eq001} and \eqref{ItemviiLemma07},  the proof of this lemma is finished.

\end{proof}

\begin{lem}\label{Lemma002Exp}
Let $\delta > 0$. There exists $C_\delta > 0$ such that the solutions of the system \eqref{Eq1.1}  for $|\lambda|>\delta$  and $\omega> 0$,   satisfy
\begin{eqnarray}\label{ItemiLemma08}
& (i)& |\lambda|\big[ \|A^{-\frac{1}{2}}\theta\|^2+\omega\|\theta\|^2 \big] \quad \leq \quad C_\delta\|F\|_{\mathcal{H}_\omega} \|U\|_{\mathcal{H}_\omega} \quad  {\rm for}\quad  0\leq \sigma\leq \frac{3}{2},
\\
\label{ItemiiLemma08}
&(ii) &  |\lambda|\|u\|^2_2\quad  \leq\quad  C_\delta\|F\|_{\mathcal{H}_\omega}\|U\|_{\mathcal{H}_\omega} \quad {\rm for} \quad   \dfrac{5}{4}\leq  \sigma \leq \frac{3}{2},
\\
\label{ItemiiiLemma08}
&(iii)&    |\lambda|\|v\|^2_{\mathbf{H}^1} \leq    |\lambda|[\|u\|^2_2+\|\theta\|^2] +C_\delta\|F\|_{\mathcal{H}_\omega}\|U\|_{\mathcal{H}_\omega} \quad  {\rm for}\quad  0\leq \sigma\leq \frac{3}{2},
\\
\label{ItemivLemma08}
& (iv) &  \|A^{\sigma-1}v\|^2+\omega\|A^\frac{2\sigma-1}{2}v\|^2\quad  \leq\quad    C_\delta\|F\|_{\mathcal{H}_\omega}\|U\|_{\mathcal{H}_\omega} \quad {\rm for} \quad  0\leq \sigma\leq \frac{3}{2},
\\
\label{ItemvLemma08}
& (v) &  \|A^\sigma u\|^2\quad \leq \quad   C_\delta\|F\|_{\mathcal{H}_\omega}\|U\|_{\mathcal{H}_\omega} \quad {\rm for} \quad 1< \sigma < \frac{5}{4}.
\end{eqnarray}

\end{lem}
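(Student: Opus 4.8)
The plan is to prove all five estimates by the same frequency–domain device used in Lemma~\ref{Lemma002Exp0}: for each item one tests a suitable combination of the stationary equations \eqref{Pesp-10}--\eqref{Pesp-30} against a multiplier obtained by applying a fractional power of $A$ (sometimes carrying a factor $\lambda$ or $(I+\omega A)$) to $u$, $v$ or $\theta$; then one uses the self-adjointness of the powers of $A$ to shift exponents, substitutes from the remaining equations to kill every term carrying a positive power of $\lambda$, takes the real or the imaginary part so that the purely imaginary "parasitic" quantities (such as $i\lambda^{2}\|A^{-1/2}\theta\|^{2}$) and the real squared norms of $u$ and $v$ that survive the substitutions drop out, and finally estimates the remainder by $C_\delta\|F\|_{\mathcal H_\omega}\|U\|_{\mathcal H_\omega}$ with the dissipation identity \eqref{Pdis-10}, the bound \eqref{GevreyC2Eq001}, the embeddings $\mathfrak D(A^{r_1})\hookrightarrow\mathfrak D(A^{r_2})$ ($r_1>r_2$), the interpolation inequality of Theorem~\ref{Lions-Landau-Kolmogorov}, and Young's inequality for the absorptions. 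The order will be (iv), (i), (iii), (v), (ii), since the later ones feed on the earlier ones.

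For (iv) I would test \eqref{Pesp-30} against $A^{\sigma-2}(I+\omega A)v$: the coupling term is then exactly $\|A^{\sigma-1}v\|^{2}+\omega\|A^{(2\sigma-1)/2}v\|^{2}$, while in $\dual{i\lambda\theta}{A^{\sigma-2}(I+\omega A)v}$ the factor $i\lambda(I+\omega A)v$ is replaced, using \eqref{Pesp-20}, by $-A^{2}u+A^{\sigma}\theta+(I+\omega A)g$, so that the $\lambda$-dependence becomes $\dual{\theta}{A^{\sigma}u}$, $\|A^{\sigma-1}\theta\|^{2}$ and forcing terms; since $\sigma\le\tfrac32$ every exponent that occurs is admissible, and after absorbing an $\varepsilon\,\omega\|A^{(2\sigma-1)/2}v\|^{2}$ the estimate closes (using \eqref{GevreyC2Eq001} where $\sigma\ge1$ and the elementary embeddings otherwise). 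Then (i) is immediate: pairing \eqref{Pesp-30} with $A^{-1}\theta$ and taking imaginary parts gives $|\lambda|\,\|A^{-1/2}\theta\|^{2}\le|\dual{A^{\sigma-1}v}{\theta}|+|\dual{h}{A^{-1}\theta}|$, and pairing with $\theta$ gives $|\lambda|\,\|\theta\|^{2}\le|\dual{A^{(2\sigma-1)/2}v}{A^{1/2}\theta}|+|\dual{h}{\theta}|$; on the right the first factor of each scalar product is controlled by (iv) (the weighted term $\omega\|A^{(2\sigma-1)/2}v\|^{2}$ supplying what is needed in the second inequality) and the second factor by \eqref{Pdis-10} and Poincar\'e, so both right-hand sides are $\le C_\delta\|F\|\|U\|$. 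For (iii) I would test \eqref{Pesp-20} against $\lambda u$, use $\lambda u=-i(v+f)$ from \eqref{Pesp-10} and $A^{\sigma}v=h-i\lambda\theta-A\theta$ from \eqref{Pesp-30}; after taking the real part all high-frequency pieces collapse and one is left with the identity $\lambda\|v\|_{\mathbf H^{1}}^{2}=\lambda[\|u\|_{2}^{2}+\|\theta\|^{2}]+\text{Im}\{\dual{Au}{Af}+\dual{(I+\omega A)g}{v}+\dual{\theta}{h}\}$, which is exactly (iii).

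For (v) I would test \eqref{Pesp-20} against $A^{2\sigma-2}u$ and use \eqref{Pesp-10}: the principal term is $\|A^{\sigma}u\|^{2}$, the $i\lambda(I+\omega A)v$-term contributes $-\|A^{\sigma-1}v\|^{2}-\omega\|A^{(2\sigma-1)/2}v\|^{2}$ (bounded by (iv)) plus a forcing term, and $\dual{A^{\sigma}\theta}{A^{2\sigma-2}u}$ is rewritten by self-adjointness as $\dual{A^{2\sigma-2}\theta}{A^{\sigma}u}$; since $2\sigma-2<\tfrac12$ when $\sigma<\tfrac54$ the first factor is $\le C\|\theta\|_{1}$, and the product is absorbed by Young against $\|A^{\sigma}u\|^{2}$ after \eqref{Pdis-10}. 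The hard part is (ii). Here I would test \eqref{Pesp-10} against $A^{2}u$, substitute $A^{\sigma}v$ from \eqref{Pesp-30} and then $A^{\sigma}\theta$ from \eqref{Pesp-20}; taking the imaginary part annihilates the real squared norms $\|A^{(5-2\sigma)/2}u\|^{2}$, $\|A^{(3-2\sigma)/2}v\|^{2}$ and $\omega\|A^{2-\sigma}v\|^{2}$ that appear, and after absorbing $\varepsilon|\lambda|\|u\|_{2}^{2}$ one obtains $|\lambda|\,\|u\|_{2}^{2}\le C|\lambda|\,\|A^{1-\sigma}\theta\|^{2}+C_\delta\|F\|\|U\|$. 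It is precisely here that the hypothesis $\sigma\ge\tfrac54$ is forced: the surviving forcing scalar products contain $A^{7/2-2\sigma}f$ and $A^{7/2-2\sigma}u$, which are dominated by $\|f\|_{2}$ (recall $f\in\mathbf H^{2}$) and by $\|u\|_{2}$ only when $7-4\sigma\le2$, i.e. $\sigma\ge\tfrac54$. Finally $|\lambda|\,\|A^{1-\sigma}\theta\|^{2}$ is handled by Theorem~\ref{Lions-Landau-Kolmogorov} applied to $A^{1-\sigma}\theta$ with $-\tfrac12<1-\sigma<0$: the exponents arrange so that the power of $|\lambda|$ is exactly $1$, whence $|\lambda|\,\|A^{1-\sigma}\theta\|^{2}\le L^{2}\,(|\lambda|\,\|A^{-1/2}\theta\|^{2})^{2\sigma-2}(|\lambda|\,\|\theta\|^{2})^{3-2\sigma}\le C_\delta\|F\|\|U\|$ by (i), which closes (ii).
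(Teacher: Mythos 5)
Your proposal is correct in substance and, for items (iii), (iv) and (v), follows essentially the same multiplier computations as the paper ($\lambda u$ for (iii), $A^{\sigma-2}(I+\omega A)v$ for (iv), $A^{2\sigma-2}u$ for (v)); the only slip there is a sign --- testing \eqref{Pesp-20} against $A^{2\sigma-2}u$ produces $+\|A^{\sigma-1}v\|^{2}+\omega\|A^{\frac{2\sigma-1}{2}}v\|^{2}$, not minus --- but since you only use (iv) to bound these terms, the estimate is unaffected. The genuine differences are in (i) and (ii). For (i) the paper works directly with the multiplier $\lambda A^{-2}(I+\omega A)\theta$ and substitutes $i\lambda(I+\omega A)v$ from \eqref{Pesp-20}, so its (i) is independent of (iv); you instead derive (i) from (iv) by the two elementary pairings with $A^{-1}\theta$ and $\theta$, which is shorter but makes the order ``(iv) before (i)'' essential --- fortunately your (iv) does not use (i), so there is no circularity. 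For (ii) the paper eliminates the dangerous term $-i\lambda\dual{\theta}{A^{2-\sigma}u}$ by writing $i\lambda u=v+f$, turning it into $\dual{A^{\frac{1}{2}}\theta}{A^{\frac{3}{2}-\sigma}v}\leq\|\theta\|_{1}\|A^{\frac{1}{2}}v\|$, admissible precisely because $v\in\mathbf H^{1}$ when $\omega>0$; you instead keep $C|\lambda|\|A^{1-\sigma}\theta\|^{2}$ (as in item (ii) of the $\omega=0$ Lemma \ref{Lemma002Exp0}) and close it by interpolating $A^{1-\sigma}$ between $A^{-\frac{1}{2}}$ and $A^{0}$ with exponents $2\sigma-2$ and $3-2\sigma$ and invoking (i) twice; the exponent arithmetic is right and the argument is valid, at the cost of one extra appeal to Theorem \ref{Lions-Landau-Kolmogorov}. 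You also locate correctly the source of the restriction $\sigma\geq\frac{5}{4}$ in the exponents $\frac{7}{2}-2\sigma$ acting on $f$ and $u$. One caveat you inherit from the paper rather than introduce: in (iv) (and hence in your (i)) the term $\dual{A^{\frac{1}{2}}\theta}{A^{\frac{2\sigma-1}{2}}u}$ is only absorbed through \eqref{GevreyC2Eq001}, which Lemma \ref{Lemma001Exp} supplies for $\omega>0$ only when $\sigma\geq 1$, so the stated range $0\leq\sigma\leq\frac{3}{2}$ is not really reached by either argument below $\sigma=1$; that range is never used downstream.
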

\begin{proof} {\bf Item $(i)$;} performing the product of duality between the equation  \eqref{Pesp-30} and $\lambda A^{-2}(I+\omega A)\theta$ and using the equation \eqref{Pesp-20},  taking advantage 
of the self-adjointness of the powers of the operator $A$, we obtain
\begin{eqnarray*}
\lambda\big[\|A^{-\frac{1}{2}}\theta\|^2+\omega\|\theta\|^2 \big]\hspace*{-0.3cm} & =&\hspace*{-0.3cm} -i\lambda^2\big[\|A^{-1}\theta\|^2+\omega\|A^{-\frac{1}{2}}\theta\|^2 \big ]-\dual{ (I+\omega A)\lambda v}{ A^{\sigma-2}\theta}\\
&  &\hspace*{-0.3cm}+\dual{A^{-2}(I+\omega A)h}{\lambda \theta}\\
&= &\hspace*{-0.3cm}-i\lambda^2\big[\|A^{-1}\theta\|^2+\omega\|A^{-\frac{1}{2}}\theta \|^2\big ]-i\dual{A^2u}{A^{\sigma-2}\theta}\\
& & +i\dual{A^\sigma \theta}{A^{\sigma-2}\theta} +i\dual{(I+\omega A)g}{A^{\sigma-2}\theta}\\
& & +\dual{A^{-2}(I+\omega A)h}{iA\theta+iA^\sigma v-ih},
\end{eqnarray*}
then
\begin{eqnarray*}
\lambda\big[\|A^{-\frac{1}{2}}\theta\|^2+\omega\|\theta\|^2 \big]\hspace*{-0.2cm} & =&  -i\lambda^2\big[\|A^{-1}\theta\|^2+\omega\|A^{-\frac{1}{2}}\theta\|^2 \big ]-i\dual{A^{\sigma-\frac{1}{2}}u}{A^\frac{1}{2}\theta}\\
& &+i\|A^{\sigma-1}\theta\|^2+i\dual{A^\frac{1}{2}g}{A^{\sigma-\frac{5}{2}}\theta}+i\omega\dual{A^\frac{1}{2}g}{A^{\sigma-\frac{3}{2}}\theta}\\
& & -i\dual{h}{A^{-1}\theta}-i\omega\dual{h}{\theta}
-i\dual{h}{A^{\sigma-2}v}-i\dual{h}{A^{\sigma-1}v}\\
& & +i\big[\|A^{-1}h\|^2 +\omega\|A^{-\frac{1}{2}} h\|^2 \big]. 
\end{eqnarray*}
Taking real part and considering $\sigma\leq \frac{3}{2}$, and   applying inequalities Cauchy-Schwarz and Young,  for $\varepsilon>0$, exists $C_\varepsilon>0$ such that
\begin{equation}\label{2Exp007N}
|\lambda|\big[ \|A^{-\frac{1}{2}}\theta\|^2+\omega\|\theta\|^2 \big] \leq C_\delta\|F\|_{\mathcal{H}_\omega} \|U\|_{\mathcal{H}_\omega}\qquad{\rm for}\qquad \sigma\leq \dfrac{3}{2}.
\end{equation}
Therefore,  the proof of item $(i)$ of this lemma is finished.
\\
{\bf Proof:  Item $(ii)$;} performing the product of duality between the equation \eqref{Pesp-10} and $A^2u$,  using \eqref{Pesp-30} and taking advantage 
of the self-adjointness of the powers of the operator $A$, 
 we get
\begin{eqnarray}
\nonumber
i\lambda\|u\|_ 2^2 \hspace*{-0.3cm}& =&\hspace*{-0.3cm} \dual{A^\sigma v}{A^{2-\sigma} u}+\dual{Au}{Af}=\dual{-i\lambda\theta-A\theta+h}{A^{2-\sigma}u}+\dual{Au}{Af}\\
\label{2Exp007I}
&=&\hspace*{-0.3cm}\dual{\theta}{A^{2-\sigma}v}+\dual{\theta}{A^{2-\sigma}f}-\dual{A^\sigma\theta}{A^{3-2\sigma} u}+\dual{h}{A^{2-\sigma}u}+ \dual{Au}{Af}.
\end{eqnarray}

On the other hand,   performing the product of duality between the equation \eqref{Pesp-20} and $A^{3-2\sigma}u$,  using \eqref{Pesp-10} and taking advantage 
of the self-adjointness of the powers of the operator $A$, 
 we get
\begin{eqnarray}
\nonumber
\dual{A^\sigma \theta}{A^{3-2\sigma}u} \hspace*{-0.3cm}&= & \hspace*{-0.3cm} \|A^\frac{5-2\sigma}{2}u\|^2-\dual{(I+\omega A)v}{A^{3-2\sigma}(i\lambda u)}-\dual{(I+\omega A)g}{A^{3-2\sigma}u}\\
\label{2Exp007Ia}
&=& \|A^\frac{5-2\sigma}{2}u\|^2-\|A^\frac{3-2\sigma}{2}v\|^2-\omega\|A^{2-\sigma}v\|^2
-\dual{A^\frac{1}{2}v}{A^{\frac{5}{2}-2\sigma}f}\\
\nonumber
& & -\omega\dual{A^\frac{1}{2}v}{A^{\frac{7}{2}-2\sigma}f}-\dual{A^\frac{1}{2}g}{A^{\frac{5}{2}-2\sigma}u}-\omega\dual{A^\frac{1}{2}g}{A^{\frac{7}{2}-2\sigma}u}.
\end{eqnarray}
Using \eqref{2Exp007Ia} in \eqref{2Exp007I},  we get
\begin{multline}\label{2Exp007Ib}
i\lambda\|u\|_ 2^2 =\dual{A^\frac{1}{2}\theta}{A^{\frac{3}{2}-\sigma}v}+\dual{\theta}{A^{2-\sigma}f}+\dual{h}{A^{2-\sigma}u}+ \dual{Au}{Af}\\
 - \|A^\frac{5-2\sigma}{2}u\|^2+\|A^\frac{3-2\sigma}{2}v\|^2+\omega\|A^{2-\sigma}v\|^2
+\dual{A^\frac{1}{2}v}{A^{\frac{5}{2}-2\sigma}f}\\
 +\omega\dual{A^\frac{1}{2}v}{A^{\frac{7}{2}-2\sigma}f}+\dual{A^\frac{1}{2}g}{A^{\frac{5}{2}-2\sigma}u}+\omega\dual{A^\frac{1}{2}g}{A^{\frac{7}{2}-2\sigma}u}.
\end{multline}
Taking imaginary part in \eqref{2Exp007Ib},  we get
\begin{multline}\label{2Exp007Ic}
\lambda\|u\|_ 2^2  = {\rm Im}\{\dual{A^\frac{1}{2}\theta}{A^{\frac{3}{2}-\sigma}v}+\dual{\theta}{A^{2-\sigma}f}+\dual{h}{A^{2-\sigma}u}+ \dual{Au}{Af}\\+\dual{A^\frac{1}{2}v}{A^{\frac{5}{2}-2\sigma}f} 
+\omega \dual{A^\frac{1}{2}v}{A^{\frac{7}{2}-2\sigma}f}\\+\dual{A^\frac{1}{2}g}{A^{\frac{5}{2}-2\sigma}u}+\omega\dual{A^\frac{1}{2}g}{A^{\frac{7}{2}-2\sigma}u}\}.
\end{multline}
Considering that $\frac{5}{4}\leq \sigma\leq \frac{3}{2}$, we have
\begin{multline*}
 0\leq \dfrac{3}{2}-\sigma\leq\dfrac{1}{4}   \qquad \dfrac{1}{2}\leq 2-\sigma\leq\dfrac{3}{4},\qquad 0\leq \dfrac{3-2\sigma}{2}\leq\dfrac{1}{4},\\
 -\dfrac{1}{2}\leq \dfrac{5}{2}-2\sigma\leq 0 \qquad{\rm and}\qquad  \dfrac{1}{2}\leq \dfrac{7}{2}-2\sigma\leq 1.
\end{multline*}
 The continuous embedding  $\mathfrak{D}(A^{r_1})\hookrightarrow \mathfrak{D}(A^{r_2}),  \; r_1>r_2$,  we have
 \begin{eqnarray}
 \nonumber
|\lambda|\|u\|^2_2 & \leq & C\{ \|\theta\|^2_1+\|A^\frac{1}{2}v\|^2+\|\theta\|\|Af\|+\|h\|\|Au\|+
\|Au\|\|Af\|\\
\label{2Exp007Id}
& & +\|A^\frac{1}{2}v\|\|Af\|+\|A^\frac{1}{2}g\|\|Au\|\}.
 \end{eqnarray}
Of the estimates \eqref{GevreyC2Eq001}, \eqref{Pdis-10} and norm $\|F\|_{\mathcal{H}_\omega}$ and $\|U\|_{\mathcal{H}_\omega}$, we have
\begin{equation}\label{2Exp007Ie}
|\lambda|\|u\|^2_2\leq C_\delta \|F\|_{\mathcal{H}_\omega}\|U\|_{\mathcal{H}_\omega}\quad{\rm for}\quad \dfrac{5}{4}\leq\sigma\leq \dfrac{3}{2}.
\end{equation}
Therefore, the proof of item $(ii)$ of this lemma is finished.
\\
{\bf Proof:  Item$(iii)$;} performing the product of duality between the equation \eqref{Pesp-20} and $\lambda u$ and using the equation \eqref{Pesp-10},  taking advantage 
of the self-adjointness of the powers of the operator $A$, we obtain
\begin{eqnarray}
\nonumber
\lambda\|u\|^2_2 & =& \lambda\|v\|^2_{\mathbf{H}^1}+\dual{iA^2 u-iA^\sigma\theta-i(I+\omega A)g}{f} +\dual{A^\sigma\theta}{-iv-if}
\\
\nonumber
& &+\dual{(I+\omega A)g}{-if-iv}\\
\label{2Exp007}
&= & \lambda\|v\|^2_{\mathbf{H}^1}+i\dual{Au}{Af}+i\dual{\theta}{A^\sigma v}+i\dual{g}{v}+i\omega\dual{A^\frac{1}{2} g}{A^\frac{1}{2}v} 
\end{eqnarray}
On the other hand,  taking the duality product between  equation\eqref{Pesp-30} and $A^{-1}(\lambda\theta)$ and using the equation \eqref{Pesp-20},  taking advantage 
of the self-adjointness of the powers of the operator $A$, we obtain
\begin{eqnarray}
\label{2Exp007A}
\lambda\|\theta\|^2 & = & -i\lambda^2\|A^{-\frac{1}{2}}\theta\|^2+i\dual{A^\sigma v}{\theta}+i\|A^\frac{2\sigma-1}{2}v\|^2-i\dual{A^{\sigma-1}v}{h}\\
\nonumber
& &  -i\dual{h}{\theta}-i\dual{h}{A^{\sigma-1}v}+i\|A^{-\frac{1}{2}}h\|^2.
\end{eqnarray}
Adding \eqref{2Exp007} with \eqref{2Exp007A},   we get
\begin{multline}\label{2Exp007D}
\lambda[\|u\|^2_2+\|\theta\|^2] =\lambda\|v\|^2_{\mathbf{H}^1}+i\dual{Au}{Af}+i\dual{g}{v}+i\dual{\theta}{A^\sigma v}+i\omega\dual{A^\frac{1}{2}g}{A^\frac{1}{2}v}\\
-i\lambda^2\|A^{-\frac{1}{2}}\theta\|^2+i\dual{A^\sigma v}{\theta}+i\|A^\frac{2\sigma-1}{2}v\|^2\\-i2{\rm Re}\dual{A^{\sigma-1}v}{h}
 -i\dual{h}{\theta}
 +i\|A^{-\frac{1}{2}}h\|^2.
\end{multline}
Of identity  $i[\dual{\theta}{A^\sigma v}+\dual{A^\sigma v}{\theta}]=i2{\rm Re}\dual{\theta}{A^\sigma v}$, taking real part of \eqref{2Exp007D},  we have
\begin{multline}\label{2Exp007E}
\lambda[\|u\|^2_2+\|\theta\|^2]=\lambda\|v\|^2_{\mathbf{H}^1} +i\dual{Au}{Af}
+i\dual{g}{v} +i\omega\dual{A^\frac{1}{2}g}{A^\frac{1}{2}v} 
 -i\dual{h}{\theta}.
\end{multline}
Applying Cauchy-Schwarz and Young inequalities and norms $\|F\|_{\mathcal{H}_\omega}$ and $\|U\|_{\mathcal{H}_\omega}$,   the proof of item $(iii)$ of this lemma is finished.
\\
{\bf Proof:   Item$(iv)$;}  applying the product duality of \eqref{Pesp-30} by $A^{\sigma-2}(I+\omega A)v$,  we have
\begin{eqnarray*}
\|A^{\sigma-1}v\|^2+\omega\|A^\frac{2\sigma-1}{2}v\|^2 \hspace*{-0.25cm}&=& \hspace*{-0.25cm}  \dual{A^\frac{1}{2}\theta}{A^{\sigma-\frac{5}{2}}\big[-A^2u+A^\sigma \theta+(I+\omega A)g \big]}\\
& &-\dual{A^\frac{1}{2}\theta}{A^{\sigma-\frac{3}{2}}v}-\omega\dual{A^\frac{1}{2}\theta}{ A^{\sigma-\frac{1}{2}}v} +\dual{h}{A^{\sigma-2}v}\\
& & +\dual{h}{A^{\sigma-1}v}\\
&=& -\dual{A^\frac{1}{2}\theta}{A^\frac{2\sigma-1}{2}u}+\|A^{\sigma-1}\theta\|^2+\omega\dual{\theta}{A^{\sigma-1}g}\\
& & -\dual{A^\frac{1}{2}\theta}{A^\frac{2\sigma-3}{2}v} -\omega\dual{A^\frac{1}{2}\theta}{A^\frac{2\sigma-1}{2}v}+\dual{h}{A^{\sigma-2}v}\\
& &+\omega\dual{h}{A^{\sigma-1}v}+\dual{\theta}{g}.
\end{eqnarray*}
Considering $1< \sigma <\frac{5}{4}$,   we have:  $\sigma-2<\sigma-1\leq \frac{1}{2}$,   using \eqref{GevreyC2Eq001} and     \eqref{Pdis-10}    and  applying Cauchy-Schwarz  and Young inequalities,  led $\delta>0$.  There exists a constant  $C_\delta>0$ such that
\begin{equation}\label{Eq00EstW}
\|A^\frac{2\sigma-1}{2}v\|^2\leq C_\delta \|F\|_{\mathcal{H}_\omega}\|U\|_{\mathcal{H}_\omega}\quad {\rm for}\quad 1\leq\sigma\leq\dfrac{3}{2}.
\end{equation}
Therefore,  the proof of item $(iv)$ of this lemma is finished.
\\
 { \bf Proof: Item$(v)$;}
 Applying the product duality of \eqref{Pesp-20} by $A^{2\sigma-2}u$ and using \eqref{Pesp0-10},  we have
\begin{eqnarray*}
\|A^\sigma u\|^2& = & \dual{A^{2\sigma-2} v}{(I+\omega A)(i\lambda u)}+\dual{A^{2\sigma-2}\theta}{A^{\sigma}u}+\dual{A^{2\sigma-2}g}{(I+\omega A)u}\\
&=&\|A^{\sigma-1}v\|^2+\omega\|A^\frac{2\sigma-1}{2}v\|^2+\dual{A^\frac{1}{2}v}{A^\frac{4\sigma-5}{2}f}+\omega\dual{A^\frac{1}{2}v}{A^\frac{4\sigma-3}{2}f}\\
& & +\dual{A^\frac{1}{2}\theta}{A^\frac{6\sigma-5}{2}u}+\dual{A^\frac{1}{2}g}{A^\frac{4\sigma-5}{2}u}+\omega\dual{A^\frac{1}{2}g}{A^\frac{4\sigma-3}{2}u}.
\end{eqnarray*}
Considering $1< \sigma <\frac{5}{4}$,   we have:  $2\sigma-2<\frac{2\sigma-1}{2}$,  $\frac{4\sigma-5}{2}<\frac{4\sigma-3}{2}\leq 1$  and $\frac{4\sigma-5}{2}\leq \sigma$,  using  \eqref{Pdis-10} and \eqref{GevreyC2Eq001},   applying Cauchy-Schwarz  and Young inequalities,  led $\delta>0$.  There exists a constant  $C_\delta>0$ such that
\begin{equation}\label{Eq00EstWL9}
\|A^\sigma u\|^2\leq w\|A^\frac{2\sigma-1}{2}v\|^2 +C_\delta \|F\|_{\mathcal{H}_\omega}\|U\|_{\mathcal{H}_\omega}\quad {\rm for}\quad 1<\sigma <\frac{5}{4}.
\end{equation}
Therefore,   using estimative \eqref{ItemivLemma08} the proof of item $(v)$ of this lemma is finished.

\end{proof}
\begin{lem}\label{LemmaEixoIm}
The $C_0-$semigroup of contractions $S(t)=e^{t\mathbb{A}_\omega}$ on a Hilbert space $\mathcal{H}_\omega$,  satisfy
\begin{equation}\label{EixoImaginaryW}
i\mathbb{R}\equiv \{ i\lambda/ \lambda \in\mathbb{R} \}\subset \rho(\mathbb{A}_\omega)\qquad{\rm for}\qquad \omega\geq 0.
\end{equation}
\end{lem}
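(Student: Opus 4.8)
The plan is to prove $i\mathbb{R}\subset\rho(\mathbb{A}_\omega)$ by the classical two–step argument: first show $0\in\rho(\mathbb{A}_\omega)$, then show that no nonzero point $i\lambda$ of the imaginary axis is in the spectrum, using the contraction property together with the resolvent estimates already at our disposal. Since $\mathbb{A}_\omega$ is the generator of a contraction semigroup (Theorem \ref{Theorem2.3}), it is closed and densely defined with $\mathrm{Re}\,\dual{\mathbb{A}_\omega U}{U}\le 0$; by the Lumer–Phillips framework $i\lambda\in\rho(\mathbb{A}_\omega)$ will follow once we know $i\lambda I-\mathbb{A}_\omega$ is injective with dense range and bounded inverse, and a standard trick reduces everything to proving injectivity plus surjectivity on the imaginary axis.

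First I would handle $\lambda=0$: the equation $\mathbb{A}_\omega U=F$ with $F=(f,g,h)$ reads $v=f$, $(I+\omega A)^{-1}(-A^2u+A^\sigma\theta)=g$, $-A\theta-A^\sigma v=h$. From the third equation, $\theta=-A^{-1}(h+A^\sigma f)$, which lies in the correct space because $\sigma\le\frac32$ gives $A^{\sigma-1}f\in\mathbf{H}$ (indeed in a better space) when $f\in\mathbf{H}^2$; from the second, $A^2u=A^\sigma\theta-(I+\omega A)g$, so $u=A^{-2}\big(A^\sigma\theta-(I+\omega A)g\big)$, and one checks $u\in\mathbf{H}^2$ and the domain conditions \eqref{DomomegaZ}/\eqref{Domomega} hold. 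Boundedness of this solution operator is immediate from the continuity of the negative powers of $A$ and the embeddings $\mathbf{H}^{r_1}\hookrightarrow\mathbf{H}^{r_2}$. Hence $0\in\rho(\mathbb{A}_\omega)$, and in particular $\mathbb{A}_\omega$ has compact resolvent if the embeddings are compact, so the spectrum is discrete — though I would prefer to avoid leaning on compactness and argue directly.

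Next, for $\lambda\neq 0$, suppose $i\lambda U=\mathbb{A}_\omega U$ with $U=(u,v,\theta)\in\mathfrak{D}(\mathbb{A}_\omega)$, i.e. $F=0$ in \eqref{Pesp0-10}--\eqref{Pesp0-30} (resp. \eqref{Pesp-10}--\eqref{Pesp-30}). Then \eqref{Pdis0-10} (resp. \eqref{Pdis-10}) gives $\|\theta\|_1^2=\mathrm{Re}\dual{F}{U}=0$, so $\theta=0$. Feeding $\theta=0$ back into the stationary system: \eqref{Pesp0-30} becomes $A^\sigma v=0$, hence $v=0$; then \eqref{Pesp0-10} gives $i\lambda u=v=0$, so $u=0$ (here $\lambda\neq 0$ is used). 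The case $\omega>0$ is identical. Thus $i\lambda I-\mathbb{A}_\omega$ is injective on its domain, so $i\lambda$ is not an eigenvalue. Since $\mathbb{A}_\omega$ is the generator of a bounded $C_0$-semigroup, the spectrum lies in $\{\mathrm{Re}\,z\le 0\}$, and by a standard perturbation/continuity argument starting from $0\in\rho(\mathbb{A}_\omega)$ and using that the purely imaginary points are not eigenvalues — equivalently, invoking the general fact that for a contraction semigroup generator, $i\lambda\in\sigma(\mathbb{A}_\omega)$ forces $i\lambda$ to be an approximate eigenvalue and then the dissipativity estimate \eqref{Pdis0-10} yields a contradiction with $\|U_n\|=1$ — one concludes $i\lambda\in\rho(\mathbb{A}_\omega)$.

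I expect the main obstacle to be the rigorous passage from ``$i\lambda$ is not an eigenvalue'' to ``$i\lambda\in\rho(\mathbb{A}_\omega)$'', i.e. ruling out that $i\lambda$ sits in the continuous or residual spectrum when the resolvent is not a priori compact. The clean way is: residual spectrum is excluded because $\mathbb{A}_\omega$ is the generator of a $C_0$-semigroup on a Hilbert space whose adjoint $\mathbb{A}_\omega^*$ satisfies the same type of dissipative estimate (one computes $\mathbb{A}_\omega^*$ and checks $\mathrm{Re}\dual{\mathbb{A}_\omega^* U}{U}\le 0$, so $i\lambda$ is not an eigenvalue of $\mathbb{A}_\omega^*$ either, hence $\mathrm{Ran}(i\lambda-\mathbb{A}_\omega)$ is dense); and the continuous spectrum is excluded by producing the uniform bound $\|(i\lambda-\mathbb{A}_\omega)^{-1}\|\le C$ on any bounded $\lambda$-interval — this follows by contradiction: if $\|U_n\|_{\mathcal{H}_\omega}=1$ with $(i\lambda_n-\mathbb{A}_\omega)U_n=F_n\to 0$ and $\lambda_n\to\lambda$ bounded, then \eqref{Pdis0-10}/\eqref{Pdis-10} force $\theta_n\to 0$ in $\mathbf{H}^1$, whence as above $v_n\to 0$ and $u_n\to 0$, contradicting $\|U_n\|=1$. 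Combining closed range (from the uniform bound) with dense range gives surjectivity, completing $i\lambda\in\rho(\mathbb{A}_\omega)$ for all $\lambda\in\mathbb{R}$.
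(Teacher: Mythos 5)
The paper does not actually prove this lemma: it only cites \cite{OroJRPata2013} (Lemma 4.3(i)) for $\omega=0$ and \cite{HSLiuRacke2019} (Theorem 2.4) for $\omega>0$, so your write-up is in effect supplying the argument the references contain. Your strategy is the standard and correct one: invert $\mathbb{A}_\omega$ explicitly to get $0\in\rho(\mathbb{A}_\omega)$, use the dissipation identity $\mathrm{Re}\dual{\mathbb{A}_\omega U}{U}=-\|\theta\|_1^2$ to exclude purely imaginary eigenvalues, and then upgrade this to $i\mathbb{R}\subset\rho(\mathbb{A}_\omega)$ via the fact that any spectral point of a contraction-semigroup generator lying on the imaginary axis is a boundary point of the spectrum and hence an approximate eigenvalue (this observation, which you mention, also disposes of the residual spectrum without having to compute $\mathbb{A}_\omega^*$). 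The $\lambda=0$ inversion and the eigenvalue step ($\theta=0\Rightarrow A^\sigma v=0\Rightarrow v=0\Rightarrow u=0$ for $\lambda\neq0$) are sound, using only $\sigma\le\frac32$.

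The one genuine gap is the phrase ``whence as above $v_n\to0$ and $u_n\to0$'' in the approximate-eigenvector contradiction. In the exact eigenvalue case you have $\theta=0$ in $\mathfrak{D}(A)$, so \eqref{Pesp0-30} gives $A^\sigma v=0$ in $\mathbf{H}$. In the approximate case you only know $\theta_n\to0$ in $\mathbf{H}^1$, so $A\theta_n\to0$ merely in $\mathbf{H}^{-1}$ and \eqref{Pesp0-30} yields $A^{\sigma-\frac12}v_n\to0$ in $\mathbf{H}$, not $A^\sigma v_n\to0$. For $\sigma\ge\frac12$ this still gives $v_n\to0$ by the embedding, and pairing \eqref{Pesp0-20} with $u_n$ gives $\|u_n\|_2^2=\|v_n\|^2+o(1)$, which closes the contradiction. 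For $\sigma<\frac12$, however, $A^{\sigma-\frac12}$ is bounded with unbounded inverse, so $A^{\sigma-\frac12}v_n\to0$ does not imply $v_n\to0$; you need the additional step $v_n=i\lambda_n u_n-f_n$, hence $A^{\sigma-\frac12}u_n\to0$ (using $\lambda_n\to\lambda\neq0$), followed by the interpolation inequality \eqref{ILLK} between the exponents $\sigma-\frac12<0<1$ together with the boundedness of $\|Au_n\|$, to conclude $u_n\to0$ and then $v_n\to0$. With that repair the argument is complete; as written, the decisive step — precisely the one where the coupling exponent $\sigma$ enters — is asserted rather than proved.
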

\begin{proof}
The test for both cases $\omega=0$ and $\omega>0$, are standards. 
For the case $\omega=0$ consult \cite{OroJRPata2013}, Lemma 4.3 item (i),  or  \cite{MSJR}.\\
For the case $\omega>0$ consult \cite{HSLiuRacke2019},  Theorem 2.4.

\end{proof}

\subsubsection{Analyticity de $S_\omega(t)=e^{t\mathbb{A}_\omega}$ for $\omega\geq 0$}

In this subsubsection it will be shown that the semigroup $S_0(t)$ for $\omega=0$ is analytic when the parameter $\sigma=1$  and we also show that $S_\omega(t )$ for $\omega>0$ is analytic for $\sigma\in[\frac{5}{4}, \frac{3}{2}]$.
\begin{theorem}\label{Analyticity0}
The semigroup $S_0(t)=e^{t \mathbb{A}_0}$   is analytic for $\sigma=1$.
\end{theorem}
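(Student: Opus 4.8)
The plan is to invoke the analyticity criterion of Theorem~\ref{LiuZAnalyticity}. The spectral hypothesis $i\mathbb{R}\subset\rho(\mathbb{A}_0)$ is already provided by Lemma~\ref{LemmaEixoIm}, so by Remark~\ref{EAnalyticity1} it suffices to produce a constant $C_\delta>0$ such that, for $\sigma=1$ and $|\lambda|>\delta$, the solution $U=(u,v,\theta)$ of $(i\lambda I-\mathbb{A}_0)U=F$ satisfies
\begin{equation*}
|\lambda|\,\|U\|_{\mathcal{H}_0}^2=|\lambda|\big(\|u\|_2^2+\|v\|^2+\|\theta\|^2\big)\leq C_\delta\,\|F\|_{\mathcal{H}_0}\|U\|_{\mathcal{H}_0}.
\end{equation*}
I would obtain the three weighted terms one after another, feeding each into the next.

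The crucial step is the bound on $|\lambda|\,\|\theta\|^2$. Taking the duality product of \eqref{Pesp0-30} (with $\sigma=1$) against $\theta$ gives $i\lambda\|\theta\|^2+\|\theta\|_1^2+\dual{A^{1/2}v}{A^{1/2}\theta}=\dual{h}{\theta}$; since $\|\theta\|_1^2$ is real, taking imaginary parts yields
\begin{equation*}
|\lambda|\,\|\theta\|^2\leq \|h\|\,\|\theta\|+\|v\|_1\,\|\theta\|_1.
\end{equation*}
Here \eqref{Pdis0-10} bounds $\|\theta\|_1^2$ by $\|F\|_{\mathcal{H}_0}\|U\|_{\mathcal{H}_0}$, Lemma~\ref{Lemma002Exp0}(iv) bounds $\|v\|_1^2$ by $C_\delta\|F\|_{\mathcal{H}_0}\|U\|_{\mathcal{H}_0}$, and $\|h\|\leq\|F\|_{\mathcal{H}_0}$, $\|\theta\|\leq\|U\|_{\mathcal{H}_0}$; combining these with Young's inequality gives $|\lambda|\,\|\theta\|^2\leq C_\delta\|F\|_{\mathcal{H}_0}\|U\|_{\mathcal{H}_0}$.

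Next, for $\sigma=1$ one has $A^{1-\sigma}\theta=\theta$, so item (ii) of Lemma~\ref{Lemma002Exp0} reads $|\lambda|\,\|u\|_2^2\leq C|\lambda|\,\|\theta\|^2+C_\delta\|F\|_{\mathcal{H}_0}\|U\|_{\mathcal{H}_0}$, and the previous estimate turns this into $|\lambda|\,\|u\|_2^2\leq C_\delta\|F\|_{\mathcal{H}_0}\|U\|_{\mathcal{H}_0}$. Then item (i)$_1$ of Lemma~\ref{Lemma002Exp0} gives $|\lambda|\,\|v\|^2\leq |\lambda|\big[\|u\|_2^2+\|\theta\|^2\big]+C_\delta\|F\|_{\mathcal{H}_0}\|U\|_{\mathcal{H}_0}$, whence $|\lambda|\,\|v\|^2\leq C_\delta\|F\|_{\mathcal{H}_0}\|U\|_{\mathcal{H}_0}$ as well. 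Adding the three bounds gives the displayed inequality, so $\limsup_{|\lambda|\to\infty}\|\lambda(i\lambda I-\mathbb{A}_0)^{-1}\|_{\mathcal{L}(\mathcal{H}_0)}<\infty$, and Theorem~\ref{LiuZAnalyticity} yields analyticity of $S_0(t)$.

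The one genuinely delicate point is the estimate on $|\lambda|\,\|\theta\|^2$: this is precisely where $\sigma=1$ is used essentially, because testing the heat equation against $\theta$ produces the coupling term $\dual{A^{1/2}v}{A^{1/2}\theta}$, which lives exactly in the regularity $\mathbf{H}^1\times\mathbf{H}^1$ controlled by the dissipation \eqref{Pdis0-10} together with the auxiliary bound $\|v\|_1^2\leq C_\delta\|F\|_{\mathcal{H}_0}\|U\|_{\mathcal{H}_0}$ of Lemma~\ref{Lemma002Exp0}(iv). For $\sigma\neq1$ the exponents no longer balance and this closing argument breaks down, which is consistent with the known failure of analyticity for $\sigma\in[0,1)\cup(1,3/2]$. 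Everything else is routine bookkeeping with Cauchy--Schwarz and Young and the already-established lemmas.
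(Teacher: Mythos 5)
Your proposal is correct and follows essentially the same route as the paper: verify $i\mathbb{R}\subset\rho(\mathbb{A}_0)$ via Lemma~\ref{LemmaEixoIm}, test \eqref{Pesp0-30} against $\theta$ to get $|\lambda|\|\theta\|^2\leq C_\delta\|F\|_{\mathcal{H}_0}\|U\|_{\mathcal{H}_0}$ (using the dissipation \eqref{Pdis0-10} and the $\|v\|_1^2$ bound), then feed this into item (ii) of Lemma~\ref{Lemma002Exp0} for $|\lambda|\|u\|_2^2$ and into item (i) for $|\lambda|\|v\|^2$. The only nominal difference is that you explicitly credit item (iv) of Lemma~\ref{Lemma002Exp0} for controlling the coupling term $\dual{A^{1/2}v}{A^{1/2}\theta}$, which is indeed the ingredient actually needed there (the paper's citation of item (ii) at that point appears to be a slip), so your bookkeeping is if anything slightly more accurate.
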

\begin{proof}
A proof of this theorem will be using the Theorem \ref{LiuZAnalyticity},   therefore we must verify the conditions \eqref{EixoIm} and \eqref{EAnalyticity2}.  

The verification of the condition \eqref{EixoIm} was justified in the Lemma \ref{LemmaEixoIm},  next we will verify the condition \eqref{EAnalyticity2} for $S_0(t)=e^{t\mathbb{A}_0}$:

 Taking $\sigma=1$ and the duality product between equation \eqref{Pesp0-30} and $\theta$ 
 \begin{equation*}
 i\lambda\|\theta\|^2=-\|\theta\|_1^2-\dual{A^\frac{1}{2}v}{A^\frac{1}{2}\theta}+\dual{h}{\theta},
 \end{equation*}
 taking real part and using Cauchy-Schwarz and Young inequalities and item $(ii)$ Lemma \ref{Lemma002Exp0} and estimative \eqref{Pdis0-10},    we have
 \begin{equation}\label{Eq001Gevrey0}
 |\lambda|\|\theta\|^2 \leq C_\delta \|F\|_{\mathcal{H}_0}\|U\|_{\mathcal{H}_0}\quad{\rm for }\qquad \sigma=1.
 \end{equation}
 
 On the other hand,  from item $(ii)$ to Lemma\eqref{Lemma002Exp0},  we have
\begin{equation}\label{2Exp007Ie2}
|\lambda|\|u\|^2_2\leq C|\lambda|\|\theta\|^2+C_\delta \|F\|_{\mathcal{H}_0}\|U\|_{\mathcal{H}_0}\quad{\rm for}\qquad \sigma=1.
\end{equation}
Using estimative \eqref{Eq001Gevrey0} in  \eqref{2Exp007Ie2},  we have
\begin{equation}\label{2Exp007Ie3}
|\lambda|\|u\|^2_2\leq C_\delta \|F\|_{\mathcal{H}_0}\|U\|_{\mathcal{H}_0}\quad{\rm for}\qquad \sigma=1.
\end{equation}
Finally,  using  \eqref{Eq001Gevrey0} and \eqref{2Exp007Ie3}  in  estimative  \eqref{ItemiLemma07} ( item (i) Lemma\eqref{Lemma002Exp0}),   we get
 \begin{equation}\label{Eq002Gevrey0}
 |\lambda \|v\|^2 \leq C_\delta \|F\|_{\mathcal{H}_0}\|U\|_{\mathcal{H}_0}\quad{\rm for }\qquad  \sigma=1.
 \end{equation}
Therefore of the estimates \eqref{Eq001Gevrey0}, \eqref{2Exp007Ie3} and \eqref{Eq002Gevrey0}, we finish the proof of this theorem.

\end{proof}

\begin{theorem}\label{AnalyticityOmega}
The semigroup $S_\omega(t)=e^{t \mathbb{A}_\omega}$ for $\omega>0$  is analytic for $\sigma\in [\frac{5}{4},\frac{3}{2}]$.
\end{theorem}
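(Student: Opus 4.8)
The plan is to invoke the Liu--Zheng characterization, Theorem \ref{LiuZAnalyticity}, exactly as in the proof of Theorem \ref{Analyticity0}. Condition \eqref{EixoIm}, i.e. $i\mathbb{R}\subset\rho(\mathbb{A}_\omega)$, is already supplied by Lemma \ref{LemmaEixoIm}, so the whole task reduces, via Remark \ref{EAnalyticity1}, to proving the uniform bound
\begin{equation*}
|\lambda|\,\|U\|_{\mathcal{H}_\omega}^2 = |\lambda|\big(\|u\|_2^2+\|v\|_{\mathbf{H}^1}^2+\|\theta\|^2\big)\le C_\delta\|F\|_{\mathcal{H}_\omega}\|U\|_{\mathcal{H}_\omega}
\end{equation*}
for $|\lambda|>\delta$ and $\sigma\in[\tfrac54,\tfrac32]$, with $C_\delta$ independent of $\lambda$ (it may depend on the fixed parameters $\omega$, $\delta$, $\sigma$).

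First I would dispatch the $u$-component: this is precisely item $(ii)$ of Lemma \ref{Lemma002Exp}, which gives $|\lambda|\|u\|_2^2\le C_\delta\|F\|_{\mathcal{H}_\omega}\|U\|_{\mathcal{H}_\omega}$ on the whole range $\sigma\in[\tfrac54,\tfrac32]$; this is in fact the one genuinely nontrivial estimate, and the reason the threshold $\tfrac54$ appears. Next, for the temperature I would use item $(i)$ of the same lemma: since $\omega>0$ is a fixed constant, the nonnegative term $\omega|\lambda|\|\theta\|^2$ appearing there is controlled, hence $|\lambda|\|\theta\|^2\le \omega^{-1}C_\delta\|F\|_{\mathcal{H}_\omega}\|U\|_{\mathcal{H}_\omega}$. (Alternatively one can mimic the argument of Theorem \ref{Analyticity0}: take the duality product of \eqref{Pesp-30} with $\theta$, pass to the imaginary part $\lambda\|\theta\|^2={\rm Im}\{-\dual{A^\sigma v}{\theta}+\dual{h}{\theta}\}$, write $\dual{A^\sigma v}{\theta}=\dual{A^{\frac{2\sigma-1}{2}}v}{A^{\frac{1}{2}}\theta}$, and bound the factors using item $(iv)$ of Lemma \ref{Lemma002Exp} together with \eqref{Pdis-10}.) Finally, item $(iii)$ of Lemma \ref{Lemma002Exp} asserts $|\lambda|\|v\|_{\mathbf{H}^1}^2\le|\lambda|[\|u\|_2^2+\|\theta\|^2]+C_\delta\|F\|_{\mathcal{H}_\omega}\|U\|_{\mathcal{H}_\omega}$, and substituting the two bounds just obtained closes the estimate for the velocity component. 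Adding the three pieces yields the displayed inequality, hence \eqref{Analyticity} holds and Theorem \ref{LiuZAnalyticity} gives analyticity of $S_\omega(t)$ on $\sigma\in[\tfrac54,\tfrac32]$.

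I do not expect a real obstacle: all the analytic work is packaged in Lemma \ref{Lemma002Exp}, and in particular the sharp frequency-weighted estimate for $\|u\|_2^2$ in its item $(ii)$ is where the constraint $\sigma\ge\tfrac54$ is forced — below that threshold only item $(v)$ is available, which controls $\|A^\sigma u\|^2$ without the $|\lambda|$ weight, and that is exactly why analyticity should (and is claimed to) fail on $[0,\tfrac54)$. The only point needing a line of care is that the constants are allowed to depend on the fixed $\omega$, so dividing by $\omega$ in the temperature estimate is harmless; one should also make explicit that every constant is uniform in $\lambda$ for $|\lambda|>\delta$.
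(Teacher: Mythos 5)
Your proposal is correct and follows essentially the same route as the paper: the paper likewise combines items $(i)$ and $(ii)$ of Lemma \ref{Lemma002Exp} to control $|\lambda|[\|u\|_2^2+\|\theta\|^2]$ and then feeds this into item $(iii)$ to handle $|\lambda|\|v\|_{\mathbf{H}^1}^2$, all within the Liu--Zheng framework with $i\mathbb{R}\subset\rho(\mathbb{A}_\omega)$ from Lemma \ref{LemmaEixoIm}. Your explicit remark that one divides by the fixed $\omega>0$ to extract $|\lambda|\|\theta\|^2$ from item $(i)$ is a point the paper leaves implicit, and is worth keeping.
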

\begin{proof}
Now, proof of this theorem will be using the  Theorem \ref{LiuZAnalyticity},   therefore we must verify the conditions \eqref{EixoIm} and \eqref{EAnalyticity2}.  

The verification of the condition \eqref{EixoIm} was justified in the Lemma \ref{LemmaEixoIm},  next we will verify the condition \eqref{EAnalyticity2} for $S_\omega(t)=e^{\mathbb{A}_\omega}$ where $\omega>0$, i.e:
\begin{equation}\label{Eq01AnalyticityW}
|\lambda|\big[ \|u\|^2_2+\|v\|^2_{\mathbf{H}^1}+\|\theta\|^2 \big]\leq C_\delta\|F\|_{\mathcal{H}_\omega}\|U\|_{\mathcal{H}_\omega}\qquad{\rm for}\qquad   \dfrac{5}{4}\leq\sigma\leq \dfrac{3}{4}.
\end{equation}
Adding the inequalities of the items $(i)$ and $(ii)$ of the Lemma \ref{Lemma002Exp}, we obtain
\begin{equation}\label{Eq02AnalyticityW}
|\lambda|\big[ \|u\|^2_2+\|\theta\|^2 \big]\leq C_\delta\|F\|_{\mathcal{H}_\omega}\|U\|_{\mathcal{H}_\omega}\qquad{\rm for}\qquad   \dfrac{5}{4}\leq\sigma\leq \dfrac{3}{4}.
\end{equation}
Finally,   using  estimativel \eqref{Eq02AnalyticityW}  in  \eqref{ItemiiiLemma08} (Item $(iii)$ of Lemma \ref{Lemma002Exp}),  we conclude the proof  this theorem.

\end{proof}


\subsubsection{Lack of analyticity of $S_\omega(t)=e^{t\mathbb{A}_\omega}$ for $\omega\geq 0$}
The study of the lack of analyticity of $S_\omega(t)=e^{t\mathbb{A}_\omega}$,  will be carried out in two stages, the first for $\omega=0$ and the second $\omega>0$.

{\bf Stage 1: $\omega=0$.}  Since the operator linear $A$  is strictly positive,  self-adjoint and it has compact resolvent, its spectrum is constituted by positive eigenvalues $(\eta_n)$ such that $\eta_n\con \infty$ as $n\con \infty$.  For $n\in \N$ we denote with $e_n$ an unitary $\mathfrak{D}(A^0)$-norm eigenvector associated to the eigenvalue $\eta_n$, that is,
\begin{equation}\label{auto-100}
Ae_n=\eta_ne_n,\quad \|e_n\|=1,\quad n\in\N.
\end{equation}
\begin{theorem}\label{AnaliticidadePlaca-200}
The semigroup $S_0(t)=e^{t \mathbb{A}_0}$  is not analytic for $\sigma\in [0,1) \cup (1,\frac{3}{2}]$.
\end{theorem}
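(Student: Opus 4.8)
The plan is to disprove condition~\eqref{Analyticity} of Theorem~\ref{LiuZAnalyticity} by exhibiting an explicit sequence of data $F_n\in\mathcal{H}_0$ along the eigenvectors $e_n$ for which $|\lambda_n|\,\|U_n\|_{\mathcal{H}_0}/\|F_n\|_{\mathcal{H}_0}$ is unbounded, where $\lambda_n$ is a suitably chosen real frequency tied to $\eta_n$. Using the eigenvector basis, one reduces the resolvent equation $(i\lambda I-\mathbb{A}_0)U=F$ to the $3\times 3$ scalar system obtained from \eqref{Pesp0-10}--\eqref{Pesp0-30} with $A$ replaced by $\eta_n$: writing $u=a_ne_n$, $v=b_ne_n$, $\theta=c_ne_n$ and $F=(f_n,g_n,h_n)e_n$, we get $i\lambda a_n-b_n=f_n$, $i\lambda b_n+\eta_n^2 a_n-\eta_n^\sigma c_n=g_n$, and $i\lambda c_n+\eta_n c_n+\eta_n^\sigma b_n=h_n$. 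The natural choice is to take $F_n=(0,0,e_n)$ (so $h_n=1$, $f_n=g_n=0$) and $\lambda=\lambda_n:=\eta_n$, which makes $i\lambda b_n+\eta_n^2 a_n=\eta_n^\sigma c_n$ resonate with the "$A^2u$ vs.\ $\lambda^2 u$" balance; from the first equation $b_n=i\eta_n a_n$, so the second gives $-\eta_n^2 a_n+\eta_n^2 a_n-\eta_n^\sigma c_n=0$, i.e.\ $c_n=0$ unless we instead perturb $\lambda_n$ slightly off $\eta_n$. Hence the refined choice is $\lambda_n=\eta_n+\mu_n$ with a lower-order correction $\mu_n$ to be selected so that the determinant of the scalar system is small relative to its entries.

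Concretely, I would compute the determinant $D_n(\lambda)=(i\lambda)\big[(i\lambda)(i\lambda+\eta_n)+\eta_n^{2\sigma}\big]+\eta_n^2(i\lambda+\eta_n)$ of the system and solve $b_n,c_n$ in terms of $h_n=1$: one finds $c_n=\big((i\lambda)^2+\eta_n^2\big)/D_n(\lambda)$ and $b_n=-\eta_n^\sigma(i\lambda+\dots)/D_n(\lambda)$ type expressions, and $a_n=(b_n+f_n)/(i\lambda)$. Choosing $\lambda_n\approx\eta_n$ forces $(i\lambda_n)^2+\eta_n^2=O(\eta_n\mu_n)$ small while $D_n(\lambda_n)$ is dominated by the term $\eta_n^\sigma\cdot i\lambda_n\sim\eta_n^{\sigma+1}$ (when $\sigma>1$) or by $\eta_n^{2}\cdot(i\lambda_n+\eta_n)\sim\eta_n^{3}$ (when $\sigma<1$, after a more careful balance), so that $c_n$ is small but $b_n\sim \eta_n^\sigma/D_n$ and especially $a_n=b_n/(i\lambda_n)$ carry the weight. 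Then $\|U_n\|_{\mathcal{H}_0}^2=\eta_n^2|a_n|^2+|b_n|^2+|c_n|^2$ and $\|F_n\|_{\mathcal{H}_0}=1$, and one checks that $|\lambda_n|\,\|U_n\|_{\mathcal{H}_0}\to\infty$ for every $\sigma\in[0,1)\cup(1,\tfrac32]$ — the single value $\sigma=1$ being precisely the one excluded because there the scalar system stays non-degenerate at the resonant frequency, matching Theorem~\ref{Analyticity0}. I would treat $\sigma<1$ and $\sigma>1$ as two sub-cases with correspondingly different asymptotics for $\mu_n$ (e.g.\ $\mu_n$ of order $\eta_n^{2\sigma-1}$ or $\eta_n^{2\sigma-2}$), reading off the exponents from the requirement that two of the three competing monomials in $D_n$ balance.

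The main obstacle will be the bookkeeping of the determinant asymptotics: one must identify, for each range of $\sigma$, which monomials in $D_n(\lambda_n)$ dominate and pick the correction $\mu_n$ (and possibly rescale the test datum, e.g.\ use $F_n=(0,0,\eta_n^{\beta}e_n)$ for a convenient $\beta$) so that the ratio $|\lambda_n|\|U_n\|/\|F_n\|$ provably diverges rather than merely failing to be obviously bounded. A clean way to organize this is to first show the weaker failure of boundedness of $\|\lambda_n(i\lambda_n-\mathbb{A}_0)^{-1}\|$ by just tracking the component that blows up fastest (typically $\eta_n^2|a_n|^2$), and only afterwards verify that the remaining components do not cancel the growth. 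I expect no genuinely hard estimate here — everything is explicit on eigenvectors — but the case distinction at $\sigma=1$ and the exact power of $\eta_n$ in $\mu_n$ need to be pinned down carefully, and one should double-check that the chosen $\lambda_n$ indeed lies in $\rho(\mathbb{A}_0)$ (guaranteed by Lemma~\ref{LemmaEixoIm}) so that $U_n$ is well-defined.
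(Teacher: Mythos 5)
Your overall strategy --- restricting to the eigenspace of $e_n$, reducing $(i\lambda I-\mathbb{A}_0)U=F$ to an explicit scalar system, and exhibiting a sequence $(\lambda_n,F_n)$ that violates \eqref{Analyticity} --- is exactly the paper's, and in the range $\sigma\in[0,1)$ your computation does go through: with $F_n=(0,0,e_n)$ and $\lambda_n=\eta_n$ one finds $c_n=0$, $b_n=\eta_n^{-\sigma}$, $a_n=b_n/(i\eta_n)$, hence $\|U_n\|_{\mathcal{H}_0}\approx\eta_n^{-\sigma}$ and $|\lambda_n|\,\|U_n\|_{\mathcal{H}_0}\approx\eta_n^{1-\sigma}\to\infty$. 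The vanishing of $c_n$ is not an obstruction (you only need one component of $U_n$ to be large), so no perturbation of $\lambda_n$ is needed there.

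The genuine gap is in the range $\sigma\in(1,\tfrac32]$, and it is not mere bookkeeping: the resonance is not at $\lambda\approx\eta_n$, and no lower-order real correction $\mu_n$ can rescue that Ansatz. Working in the weighted coordinates $(\eta_na_n,b_n,c_n)$ adapted to $\|\cdot\|_{\mathcal{H}_0}$, one checks that for all real $\lambda\asymp\eta_n$ the determinant obeys $|D_n(\lambda)|\gtrsim\eta_n^{1+2\sigma}$ (the purely imaginary term $i\lambda\eta_n^{2\sigma}$ cannot be cancelled by a real shift of $\lambda$, because $(i\lambda+\eta_n)(\eta_n^2-\lambda^2)$ has real and imaginary parts of the same order $\eta_n^2|\eta_n-\lambda|$), while every cofactor of the weighted matrix is $O(\eta_n^{2\sigma})$ when $\sigma\geq 1$; consequently $|\lambda|\,\|(i\lambda I-\mathbb{A}_0)^{-1}\|$ restricted to the $n$-th block remains \emph{bounded} near $\lambda=\eta_n$ for $\sigma>1$. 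The correct choice is $\lambda_n\approx\eta_n^{\sigma}$, obtained by balancing $(i\lambda)^2(i\lambda+\eta_n)$ against $i\lambda\eta_n^{2\sigma}$ in your $D_n$ (i.e.\ $\lambda^2\approx\eta_n^{2\sigma}$); then $|D_n|\approx\eta_n^{1+2\sigma}$, the datum $F_n=(0,0,e_n)$ yields $|c_n|\approx\eta_n^{-1}$, and $|\lambda_n|\,\|U_n\|_{\mathcal{H}_0}\gtrsim\eta_n^{\sigma-1}\to\infty$, which is moreover consistent with the sharp Gevrey exponent $1/\sigma$ of Theorem \ref{TGevrey}. You should be aware that the paper's own proof keeps $\lambda_n=\eta_n$ for all $\sigma$ and ends, in case $(ii)$ of \eqref{MUZero1}, with the lower bound $|\lambda_n|\,\|U_n\|_{\mathcal{H}_0}\geq K_0|\lambda_n|^{1-\sigma}$, which tends to zero rather than to infinity when $\sigma>1$ and therefore proves nothing in that range; so the defect you need to repair is present there as well, and the repair is a different power of $\eta_n$, not a perturbation of $\eta_n$.
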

 \begin{proof}
We apply Theorem \ref{LiuZAnalyticity}   to show this result.  Consider the eigenvalues and eigenvectors of the operator $A$ as in \eqref{auto-10}.  Let $F_n=(0,-\frac{e_n}{2},\frac{e_n}{2})\in \mathcal{H}_0$. The solution $U_n=(u_n,v_n,\theta_n)$ of the system $(i\lambda_n I-\mathbb{A}_0)U_n=F_n$ satisfies
 $v_n=i\lambda_n u_n$ and the following equations
\begin{eqnarray*}
 \lambda^2_n u_n-A^2u_n+A^\sigma\theta_n&=& \frac{e_n}{2},\\
 i\lambda_n\theta_n + A\theta_n+i\lambda_n A^\sigma u_n& = & \frac{e_n}{2}.
\end{eqnarray*}
 Let us see whether this system admits solutions of the form
 \begin{equation*}
    u_n=\mu_n e_n,\quad \theta_n=\nu_n e_n,
 \end{equation*}
for some complex numbers $\mu_n$ and $\nu_n$. Then, the numbers $\mu_n$, $\nu_n$ should satisfy the algebraic system
\begin{eqnarray}\label{eq01systemotimal0}
 \{\lambda^2_n - \eta_n^2\}\mu_n+\eta_n^\sigma\nu_n&=& \frac{1}{2},\\
 \label{eq02systemotimal0}
 i\lambda_n\eta^\sigma_n\mu_n+(i\lambda_n+\eta_n)\nu_n& = &  \frac{1}{2}.
\end{eqnarray}
At this point, we introduce the numbers
\begin{equation*}
\lambda_n^2:=\eta_n^2.
\end{equation*}
Thus, if we introduce the notation $x_n\approx y_n$ meaning that $\displaystyle\lim_{n\con\infty}\frac{|x_n|}{|y_n|}$ is a positive real number, we have that
\begin{equation*}
|\lambda_n|\approx |\eta_n|.
\end{equation*}
And  $\nu_n=\dfrac{1}{2\eta_n^\sigma}$.  From \eqref{eq01systemotimal}--\eqref{eq02systemotimal}, we find
\begin{equation}\label{MUZero}
 |\mu_n|=\Big| -\frac{i}{2}\bigg[\lambda_n^{-(1+\sigma)}+\lambda_n^{-2\sigma}\bigg]-\dfrac{\lambda_n^{-2\sigma}}{2}\Big |\approx\left\{ \begin{array}{ccc}
(i)\; |\lambda_n|^{-2\sigma} &{\rm for} & \sigma\leq 1\\\\
(ii)\; |\lambda_n|^{-(1+\sigma)} &{\rm for} & \sigma\geq 1.
\end{array}\right.
\end{equation}
 Therefore,  from \eqref{MUZero} the solution $U_n$ of the system $(i\lambda_n-\mathbb{A}_0)U_n=F_n$  for $K_0>0$,  satisfy 
 \begin{equation} \label{LackExponential0}
 \|U_n\|_{\mathcal{H}_0}\geq K_0\|v_n\|=K_0 |\lambda_n||\mu_n|\|e_n\|=\left\{ \begin{array}{ccc}
(i)\; |\lambda_n|^{1-2\sigma} &{\rm for} & \sigma\leq 1\\\\
(ii)\; |\lambda_n|^{-\sigma} &{\rm for} & \sigma\geq 1.
\end{array}\right.
 \end{equation}
     Then 
 \begin{equation}\label{MUZero1}
|\lambda_n| \|U_n\|_{\mathcal{H}_0}\geq K_0\left\{ \begin{array}{ccc}
(i)\; |\lambda_n|^{2-2\sigma} &{\rm for} & \sigma\leq 1\\\\
(ii)\; |\lambda_n|^{1-\sigma} &{\rm for} & \sigma\geq 1.
\end{array}\right..
 \end{equation}
Therefore for $(i)$ of \eqref{MUZero1}  $\|\lambda_n|\|U_n\|_{\mathcal{H}_0}\to\infty$ for $0\leq \sigma<1$  and for $(ii)$ of \eqref{MUZero1}  $\|\lambda_n|\|U_n\|_{\mathcal{H}_0}\to\infty$ for $\frac{3}{2}\geq \sigma>1$     approaches infinity as $|\lambda_n|\to\infty$. Therefore the \eqref{Analyticity} condition fails.  Consequently  for $ \sigma\in [0,1)\cup(1,\frac{3}{2}]$ the  semigroup $S_0(t)$ is not analytic.  This completes the proof of this theorem.
\\
{\bf Remark: } We can observe from $(i)$ of  \eqref{LackExponential0},  that when  $ \sigma<\frac{1}{2}$  to semigroup $S_0(t)$ is not exponential.

\end{proof}

{\bf Stage 2: $ \omega>0$.}  Now,    since the operator linear $A$  is strictly positive,  self-adjoint and it has compact resolvent, its spectrum is constituted by positive eigenvalues $(\eta_n)$ such that $\eta_n\con \infty$ as $n\con \infty$.  For $n\in \N$ we denote with $e_n$ an unitary $\mathbf{H}^1$-norm eigenvector associated to the eigenvalue $\eta_n$, that is,
\begin{equation}\label{auto-10}
Ae_n=\eta_ne_n,\quad \|e_n\|_{\mathbf{H}^1}=1,\quad n\in\N.
\end{equation}

\begin{theorem}\label{AnaliticidadePlaca-201}
The semigroup $S_\omega(t)=e^{t \mathbb{A}_\omega}$ for $\omega>0$  is not analytic for $\sigma\in [0,\frac{5}{4})$.
\end{theorem}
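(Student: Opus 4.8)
The plan is to apply Theorem \ref{LiuZAnalyticity} in contrapositive form, exactly as in the proof of Theorem \ref{AnaliticidadePlaca-200}: construct a sequence $\lambda_n\to\infty$ and data $F_n$ bounded in $\mathcal{H}_\omega$ whose corresponding solutions $U_n$ of $(i\lambda_n I-\mathbb{A}_\omega)U_n=F_n$ satisfy $|\lambda_n|\,\|U_n\|_{\mathcal{H}_\omega}\to\infty$, which forces the resolvent bound \eqref{Analyticity} to fail. Using the spectral basis \eqref{auto-10}, I would take $F_n=(0,-\tfrac{e_n}{2},\tfrac{e_n}{2})\in\mathcal{H}_\omega$ (or a similarly normalized combination adapted to the $\mathbf{H}^1$-norm on the second slot), look for solutions of the reduced form $u_n=\mu_n e_n$, $\theta_n=\nu_n e_n$, $v_n=i\lambda_n u_n$, and reduce the PDE to the $2\times2$ algebraic system obtained by projecting \eqref{Pesp-10}--\eqref{Pesp-30} onto $e_n$:
\begin{eqnarray*}
(1+\omega\eta_n)\{-\lambda_n^2\}\mu_n+\eta_n^2\mu_n-\eta_n^\sigma\nu_n &=& -\tfrac{1}{2}(1+\omega\eta_n),\\
i\lambda_n\eta_n^\sigma\mu_n+(i\lambda_n+\eta_n)\nu_n &=& \tfrac{1}{2}.
\end{eqnarray*}

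Next I would choose the frequency $\lambda_n$ so as to (nearly) kill the dominant coefficient of $\mu_n$ in the first equation, i.e.\ $\lambda_n^2\approx \dfrac{\eta_n^2}{1+\omega\eta_n}\approx \dfrac{\eta_n}{\omega}$, hence $|\lambda_n|\approx \eta_n^{1/2}$, so that the behavior of $\mu_n$ is governed by the lower-order terms (the coupling term $\eta_n^\sigma\nu_n$ and the right-hand side). Solving the system asymptotically — first getting $\nu_n$ from the second equation in terms of $\mu_n$ and the data, then substituting back — I expect $|\mu_n|$ to behave like a definite power of $\eta_n$ (equivalently of $|\lambda_n|$) that depends on $\sigma$, and then to compute
\begin{equation*}
\|U_n\|_{\mathcal{H}_\omega}\ \geq\ K\,\|v_n\|_{\mathbf{H}^1}\ =\ K\,|\lambda_n|\,|\mu_n|\,\|e_n\|_{\mathbf{H}^1}\ =\ K\,|\lambda_n|\,|\mu_n|,
\end{equation*}
so that $|\lambda_n|\,\|U_n\|_{\mathcal{H}_\omega}\gtrsim |\lambda_n|^{2}|\mu_n|$, and check that the resulting exponent of $|\lambda_n|$ is strictly positive precisely when $\sigma<\tfrac{5}{4}$; this threshold should drop out of balancing the coupling power $\eta_n^\sigma$ against the factors $\eta_n$ and $\lambda_n\approx\eta_n^{1/2}$ coming from the two equations and from the $\mathbf{H}^1$ norm, mirroring how $\sigma=1$ emerged in the $\omega=0$ case.

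The main obstacle I anticipate is purely bookkeeping rather than conceptual: one must track the inertial factor $(1+\omega\eta_n)\sim\omega\eta_n$ carefully through both equations, make the right choice of which term in the first equation to cancel with the choice of $\lambda_n$ (there may be a genuine choice, and the ``wrong'' one gives a weaker blow-up), and confirm that $\|F_n\|_{\mathcal{H}_\omega}$ stays bounded (indeed constant) under the $\mathbf{H}^1$-normalization of $e_n$ while $|\lambda_n|\|U_n\|_{\mathcal{H}_\omega}$ still diverges — in particular checking that no cancellation in the explicit formula for $\mu_n$ accidentally improves the decay beyond what the naive power count predicts, exactly the kind of subtlety handled by the $x_n\approx y_n$ asymptotic notation in \eqref{MUZero}. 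Once the exponent is pinned down and shown to be positive for $\sigma\in[0,\tfrac{5}{4})$, the contradiction with \eqref{Analyticity} is immediate and the proof closes.
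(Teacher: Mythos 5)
Your proposal follows essentially the same route as the paper's proof: the paper also works in the eigenbasis \eqref{auto-10}, takes $F_n=(0,-e_n,0)$, reduces to the $2\times 2$ algebraic system \eqref{eq01systemotimal}--\eqref{eq02systemotimal}, chooses $\lambda_n^2=\eta_n^2/(1+\omega\eta_n)$ to annihilate the coefficient of $\mu_n$, and finds $|\mu_n|\approx|\lambda_n|^{3-4\sigma}$, whence $|\lambda_n|\,\|U_n\|_{\mathcal{H}_\omega}\gtrsim|\lambda_n|^{5-4\sigma}\to\infty$ exactly for $\sigma<\tfrac54$. Your power count and norm computation $\|v_n\|_{\mathbf{H}^1}=|\lambda_n||\mu_n|$ are the ones the paper uses, so carrying out the bookkeeping you describe reproduces its argument.
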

 \begin{proof}
We apply Theorem \ref{LiuZAnalyticity}   to show this result.  Consider the eigenvalues and eigenvectors of the operator $A$ as in \eqref{auto-10}.  Let $F_n=(0,-e_n,0)\in \mathcal{H}_\omega$. The solution $U_n=(u_n,v_n,\theta_n)$ of the system $(i\lambda_n I-\mathbb{A}_\omega)U_n=F_n$ satisfies $v_n=i\lambda_n u_n$ and the following equations
\begin{eqnarray*}
 \lambda^2_n (I+\omega A) u_n-A^2u_n+A^\sigma\theta_n&=& (I+\omega A)e_n,\\
 i\lambda_n\theta_n + A\theta_n+i\lambda_n A^\sigma u_n& = & 0.
\end{eqnarray*}
 Let us see whether this system admits solutions of the form
 \begin{equation*}
    u_n=\mu_n e_n,\quad \theta_n=\nu_n e_n,
 \end{equation*}
for some complex numbers $\mu_n$ and $\nu_n$. Then, the numbers $\mu_n$, $\nu_n$ should satisfy the algebraic system
\begin{eqnarray}\label{eq01systemotimal}
 \{\lambda^2_n (1+\omega \eta_n)- \eta_n^2\}\mu_n+\eta_n^\sigma\nu_n&=& (1+\omega \eta_n),\\
 \label{eq02systemotimal}
 i\lambda_n\eta^\sigma_n\mu_n+(i\lambda_n+\eta_n)\nu_n& = & 0.
\end{eqnarray}
At this point, we introduce the numbers
\begin{equation*}
\lambda_n^2:=\dfrac{\eta_n^2}{1+\omega\eta_n}.
\end{equation*}
Thus, if we introduce the notation $x_n\approx y_n$ meaning that $\displaystyle\lim_{n\con\infty}\frac{|x_n|}{|y_n|}$ is a positive real number, we have that
\begin{equation*}
|\lambda_n|^2\approx |\eta_n|.
\end{equation*}
From \eqref{eq01systemotimal}--\eqref{eq02systemotimal}, we find
\begin{equation*}
|\mu_n|=\Big| -\dfrac{1+\omega\eta_n}{\eta_n^{2\sigma}} +i\dfrac{\omega\eta_n^2}{ \lambda_n\eta_n^{2\sigma}}
\Big|\approx  |\eta_n|^{ \max\{  1-2\sigma, \frac{3}{2}-2\sigma\}}\approx |\lambda_n|^{3-4\sigma}.
\end{equation*}

 Therefore,  the solution $U_n$ of the system $(i\lambda_n-\mathbb{A}_\omega)U_n=F_n$   for $K_\omega>0$,  satisfy 
 \begin{equation} \label{LackExponential}
 \|U_n\|_{\mathcal{H}_\omega}\geq K_\omega\|v_n\|_{\mathbf{H}^1}=K_\omega |\lambda_n||\mu_n|\|e_n\|_{\mathbf{H}^1}=K_\omega| \lambda_n|^{4-4\sigma}.
 \end{equation}
     Then 
 \begin{equation*}
|\lambda_n| \|U_n\|_{\mathcal{H}_\omega}\geq K_\omega|\lambda|^{5-4\sigma}.
 \end{equation*}
Therefore $\|\lambda_n|\|U_n\|_{\mathcal{H}_\omega}\to\infty$ for $\sigma<\frac{5}{4}$    approaches infinity as $|\lambda_n|\to\infty$. Therefore the \eqref{Analyticity} condition fails. Consequently  for $ \sigma<\frac{5}{4}$ to semigroup $S_\omega(t)$ is not analytic. This completes the proof of this theorem.
\\
{\bf Remark: } We can observe from \eqref{LackExponential}, that when   $4-4\sigma>0\Leftrightarrow \sigma<1$  to semigroup $S_\omega(t)$ is not exponential.

\end{proof}

\subsection{Sharp Gevrey-class,   for  $\omega\geq 0$}
\label{SS3.2}

In this section we discuss the Gevrey class of the semigroup $S_\omega(t)=e^{t \mathbb{A}}$,  in two cases:  In the first case we determine the Gevrey class of $S_0(t)$ and in the second we determine the Gevrey class of $S_\omega(t)$ both determined Gevrey classes are Sharp.

Before exposing our results, it is useful to recall the next definition and result  presented in \cite{Tebou2020} (adapted from
\cite{TaylorM}, Theorem 4, p. 153]).

\begin{defn}\label{Def1.1Tebou} Let $t_0\geq 0$ be a real number. A strongly continuous semigroup $S(t)$, defined on a Banach space $ \mathbb{H}$, is of Gevrey class $s > 1$ for $t > t_0$, if $S(t)$ is infinitely differentiable for $t > t_0$, and for every compact set $K \subset (t_0,\infty)$ and each $\mu > 0$, there exists a constant $ C = C(\mu, K) > 0$ such that
    \begin{equation}\label{DesigDef1.1}
    ||S^{(n)}(t)||_{\mathcal{L}( \mathcal{H})} \leq  C\mu ^n(n!)^s,  \text{ for all } \quad t \in K, n = 0,1,2...
    \end{equation}
\end{defn}
In this paper, we will use the following standard results to identify analytic or Gevrey class
semigroups, based on the estimation for the resolvent of the generator of the semigroup.
\begin{theorem}[\cite{TaylorM}]\label{Theorem1.2Tebon}
    Let $S(t)$  be a strongly continuous and bounded semigroup on a Hilbert space $ \mathcal{H}$. Suppose that the infinitesimal generator $\mathbb{B}$ of the semigroup $S(t)$ satisfies the following estimate, for some $0 < \phi < 1$:
    \begin{equation}\label{Eq1.5Tebon2020}
    \lim\limits_{|\lambda|\to\infty} \sup |\lambda |^\phi ||(i\lambda I-\mathbb{B})^{-1}||_{\mathcal{L}( \mathcal{H})} < \infty.
    \end{equation}
    Then $S(t)$  is of Gevrey  class  $s$   for $t>0$,  for every   $s >\dfrac{1}{\phi}$.
    
\end{theorem}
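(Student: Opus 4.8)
The plan is to prove Theorem~\ref{Theorem1.2Tebon} by the classical Dunford contour-integral method: the resolvent estimate~\eqref{Eq1.5Tebon2020} is first upgraded to a resolvent bound on a curve along which $|e^{\mu t}|$ decays like $e^{-ct|\text{Im}\,\mu|^{\phi}}$, and then the derivatives $S^{(n)}(t)$ are represented and estimated along that curve. Concretely, fix $C>0$ and $\lambda_0>0$ with $\|(i\lambda I-\mathbb{B})^{-1}\|_{\mathcal{L}(\mathcal{H})}\le C|\lambda|^{-\phi}$ for $|\lambda|\ge\lambda_0$. Since $S(t)$ is bounded, $\{\text{Re}\,\mu>0\}\subset\rho(\mathbb{B})$ with $\|(\mu I-\mathbb{B})^{-1}\|\le M/\text{Re}\,\mu$ there. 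For $\mu=\xi+i\lambda$ with $|\lambda|\ge\lambda_0$ write $\mu I-\mathbb{B}=(i\lambda I-\mathbb{B})(I+\xi(i\lambda I-\mathbb{B})^{-1})$; if $|\xi|\le\frac{1}{2C}|\lambda|^{\phi}$ then $|\xi|\,\|(i\lambda I-\mathbb{B})^{-1}\|\le 1/2$, so a Neumann series gives $\mu\in\rho(\mathbb{B})$ and $\|(\mu I-\mathbb{B})^{-1}\|\le 2C|\lambda|^{-\phi}$. Hence $\rho(\mathbb{B})$ contains
\[
\Sigma=\{\text{Re}\,\mu>0\}\cup\{\,\mu:\ |\text{Re}\,\mu|\le\tfrac{1}{2C}|\text{Im}\,\mu|^{\phi},\ |\text{Im}\,\mu|\ge\lambda_0\,\},
\]
and on the second set $\|(\mu I-\mathbb{B})^{-1}\|\le 2C|\text{Im}\,\mu|^{-\phi}$.

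Next, for $\tau_0\ge\lambda_0$ large, let $\Gamma=\Gamma_-\cup\Gamma_0\cup\Gamma_+$ be the contour whose unbounded pieces are $\Gamma_\pm=\{\,\mu:\ \text{Re}\,\mu=-\tfrac{1}{4C}|\text{Im}\,\mu|^{\phi},\ \pm\text{Im}\,\mu\ge\tau_0\,\}$ and whose bounded piece $\Gamma_0\subset\{\text{Re}\,\mu>0\}$ joins the two endpoints, oriented with $\text{Im}\,\mu$ increasing along $\Gamma$; then $\Gamma\subset\Sigma$, and on $\Gamma_\pm$ one has $|e^{\mu t}|=e^{-\frac{t}{4C}|\text{Im}\,\mu|^{\phi}}$ together with $\|(\mu I-\mathbb{B})^{-1}\|\le 2C|\text{Im}\,\mu|^{-\phi}$. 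Starting from the standard inverse-Laplace representation of the $C_0$-semigroup on a vertical line $\text{Re}\,\mu=c>0$ and deforming the contour to $\Gamma$ — the horizontal connecting segments at $\pm i\infty$ vanish because of the polynomial resolvent bound on $\Sigma$ — one obtains, for $t>0$,
\[
S(t)=\frac{1}{2\pi i}\int_{\Gamma}e^{\mu t}(\mu I-\mathbb{B})^{-1}\,d\mu ,
\]
the integral being absolutely convergent, uniformly for $t$ in compact subsets of $(0,\infty)$. Differentiating under the integral sign (legitimate by the same absolute convergence, now with the extra factor $\mu^{n}$ against the exponential) yields, for every $n\ge1$ and $t>0$,
\[
S^{(n)}(t)=\frac{1}{2\pi i}\int_{\Gamma}\mu^{n}e^{\mu t}(\mu I-\mathbb{B})^{-1}\,d\mu ;
\]
in particular $S(t)$ is infinitely differentiable for $t>0$.

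To estimate, fix a compact $K\subset(0,\infty)$, say $t\ge t_1>0$. The part of the integral over $\Gamma_0$ contributes at most $C_K R^{n}$, where $R=\sup_{\Gamma_0}|\mu|$, since $\|(\mu I-\mathbb{B})^{-1}\|$ and $|e^{\mu t}|$ are bounded there. On $\Gamma_\pm$, parametrizing by $\tau=|\text{Im}\,\mu|\ge\tau_0$ we have $|\mu|^{n}\le(\tau+\tfrac{1}{4C}\tau^{\phi})^{n}\le C_1^{n}\tau^{n}$ (using $\phi<1$) and $|d\mu|\le 2\,d\tau$, so this part is bounded by $C_2 C_1^{n}\int_{\tau_0}^{\infty}\tau^{n-\phi}e^{-\frac{t_1}{4C}\tau^{\phi}}\,d\tau$. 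The substitution $u=\tau^{\phi}$ turns the integral into $\tfrac{1}{\phi}\int_{\tau_0^{\phi}}^{\infty}u^{(n+1)/\phi-2}e^{-\frac{t_1}{4C}u}\,du\le C_3\,(4C/t_1)^{(n+1)/\phi-1}\,\Gamma((n+1)/\phi-1)$, and Stirling's formula gives $\Gamma((n+1)/\phi-1)\le C_4\,C_5^{n}(n!)^{1/\phi}$. Collecting all geometric-in-$n$ factors, $\|S^{(n)}(t)\|_{\mathcal{L}(\mathcal{H})}\le C_6\,C_7^{n}(n!)^{1/\phi}$ uniformly for $t\in K$ (the case $n=0$ being the trivial bound $\|S(t)\|\le C_6$).

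Finally, let $s>1/\phi$ and $\mu>0$. Writing $(n!)^{1/\phi}=(n!)^{s}(n!)^{1/\phi-s}$ and using $1/\phi-s<0$, the factorial beats the geometric: $C_7^{n}(n!)^{1/\phi-s}\mu^{-n}\to0$, hence $C_7^{n}(n!)^{1/\phi}\le C(\mu,K)\,\mu^{n}(n!)^{s}$, and likewise $C_K R^{n}\le C(\mu,K)\,\mu^{n}(n!)^{s}$; this gives exactly~\eqref{DesigDef1.1}, so by Definition~\ref{Def1.1Tebou} the semigroup $S(t)$ is of Gevrey class $s$ for $t>0$, for every $s>1/\phi$. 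The part I expect to require the most care is not the final estimate — which, once the contour is fixed, is a routine $\Gamma$-integral plus Stirling bookkeeping — but the construction of the contour representation itself: because $S(t)$ is only assumed bounded (not analytic), one must extend the resolvent to $\Sigma$, bend $\Gamma$ into $\Sigma$ so that $e^{\mu t}$ decays, deform from a vertical line while checking the arcs at infinity vanish, and justify differentiation under the integral sign.
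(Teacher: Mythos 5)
The paper does not actually prove this theorem: it is imported as a black box from Taylor's thesis \cite{TaylorM} (via the adaptation in \cite{Tebou2020}), so there is no in-paper argument to compare yours against. Your proof is the standard Dunford contour-integral argument that those references use, and its architecture is sound: extending the resolvent to the region $|{\rm Re}\,\mu|\le\frac{1}{2C}|{\rm Im}\,\mu|^{\phi}$ by a Neumann series, bending the contour so that $|e^{\mu t}|\le e^{-ct|{\rm Im}\,\mu|^{\phi}}$, and converting $\int\tau^{n-\phi}e^{-c t_{1}\tau^{\phi}}\,d\tau$ into $\Gamma((n+1)/\phi-1)\le C^{n}(n!)^{1/\phi}$ by Stirling are all correct, as is the final passage from $(n!)^{1/\phi}$ to $\mu^{n}(n!)^{s}$ for every $s>1/\phi$ and $\mu>0$.

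The one step that is asserted rather than proved --- and whose justification as written does not work --- is the claim that the horizontal connecting segments at height $\pm T$ vanish in the contour deformation ``because of the polynomial resolvent bound on $\Sigma$.'' On such a segment the resolvent bound you have is $O(T^{-\phi})$ while the segment has length of order $c+\frac{1}{4C}T^{\phi}=O(T^{\phi})$, so your estimate only gives a contribution $O(1)$, not $o(1)$; the same issue makes the vertical-line integral non-convergent in norm, since $\|(\mu I-\mathbb{B})^{-1}\|$ is merely bounded on ${\rm Re}\,\mu=c$. The standard repair is to first establish the representation on the dense set $\mathfrak{D}(\mathbb{B})$, writing $(\mu I-\mathbb{B})^{-1}x=\mu^{-1}x+\mu^{-1}(\mu I-\mathbb{B})^{-1}\mathbb{B}x$ to gain a factor $|\mu|^{-1}\le T^{-1}$ (so the segments contribute $O(T^{-1})$ and the inversion integral converges), and then pass to all of $\mathcal{H}$ by density and uniform boundedness; alternatively one integrates by parts in $\mu$ using $\frac{d}{d\mu}(\mu I-\mathbb{B})^{-1}=-(\mu I-\mathbb{B})^{-2}$ to gain factors of $T^{-\phi}$. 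Once $S(t)$ is represented as an absolutely convergent integral over $\Gamma$, your differentiation under the integral sign and the $\Gamma$-function bookkeeping go through verbatim. So: right approach, correct quantitative core, but one genuinely incomplete step in the construction of the contour representation.
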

\begin{rem}\label{EquivGevrey}
Note that showing the condition \eqref{Eq1.5Tebon2020} is enough to show that: Let $\delta>0$. There exists a constant  $C_\delta > 0$ such that the solutions of the system \eqref{Eq1.1}  for $|\lambda|>\delta$,   satisfy the inequality
\begin{equation}\label{EquivAnaliticity} 
|\lambda|^\phi\dfrac{\|U\|_{\mathcal{H}_\omega}}{\|F\|_{\mathcal{H}_\omega}}\leq C_\delta\quad \Longleftrightarrow\quad
|\lambda|^\phi\|U\|^2_{\mathcal{H}_\omega}\leq C_\delta\|F\|_{\mathcal{H}_\omega}\|U\|_{\mathcal{H}_\omega}\quad{\rm for}\quad \omega\geq 0.
\end{equation}
\end{rem}
\begin{lem}\label{Lemma001Gevrey}
Let  $\delta > 0$. There exists $C_\delta > 0$ such that the solutions of the system \eqref{Eq1.1}  for $|\lambda|>\delta$  and $\omega=0$,   satisfy
\begin{eqnarray*}
(i)\quad |\lambda|\|A^\sigma u\|^2 &\leq & C_\delta\|F\|_{\mathcal{H}_0}\|U\|_{\mathcal{H}_0}\quad{\rm  for}\quad \frac{1}{2}<\sigma< 1.\\
(ii)\quad |\lambda|\|A^\frac{\sigma-1}{2\sigma-1}\theta\|^2 &\leq & C_\delta\|F\|_{\mathcal{H}_0}\|U\|_{\mathcal{H}_0}\quad{\rm  for}\quad \frac{1}{2}<\sigma<1.\\
(iii)\quad |\lambda| \|A^\frac{1-\sigma}{2}\theta\|^2 &\leq & C_\delta\|F\|_{\mathcal{H}_0}\|U\|_{\mathcal{H}_0}\quad{\rm  for}\quad  1< \sigma <\frac{3}{2}.\\
(iv)\quad |\lambda|\|A^\frac{2\sigma^2-5\sigma+5}{2}u\|^2&\leq & C_\delta\|F\|_{\mathcal{H}_0}\|U\|_{\mathcal{H}_0}\quad{\rm  for}\quad 1<\sigma<\frac{3}{2}.
\end{eqnarray*}
\end{lem}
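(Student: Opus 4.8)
The plan is to read each of the four inequalities as a frequency‑domain resolvent estimate: by Remark \ref{EquivGevrey} it suffices to take $U=(u,v,\theta)$ solving the stationary system \eqref{Pesp0-10}--\eqref{Pesp0-30} with $|\lambda|>\delta$ and to bound the indicated weighted norm by $C_\delta\|F\|_{\mathcal H_0}\|U\|_{\mathcal H_0}$. The ingredients are the dissipation bound \eqref{Pdis0-10} (so $\|\theta\|_1^2\le\|F\|\|U\|$), the unweighted resolvent bounds of Lemma \ref{Lemma001Exp} and \eqref{GevreyC1Eq001} (so $\|u\|_2^2,\|v\|^2,\|\theta\|^2\le C_\delta\|F\|\|U\|$ once $\sigma\ge\tfrac12$), the weighted estimates of Lemma \ref{Lemma002Exp0} and Lemma \ref{Lemma7A}, the embeddings $\mathfrak D(A^{r_1})\hookrightarrow\mathfrak D(A^{r_2})$, and the interpolation inequality of Theorem \ref{Lions-Landau-Kolmogorov}. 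The mechanism that produces a factor $|\lambda|$ is always the same: pair one equation with a test function carrying an extra $\lambda$ (such as $\lambda A^{r}u$ or $\lambda A^{r}\theta$), take the real or imaginary part so that the purely imaginary ``$i\lambda^2\|\cdot\|^2$'' terms drop, and absorb the remaining $\lambda$‑weighted cross terms with Young's inequality and the estimates just listed.

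For items (i) and (ii), with $\tfrac12<\sigma<1$: I would first pair \eqref{Pesp0-10} with $A^{2\sigma}u$, rewrite $\dual{v}{A^{2\sigma}u}=\dual{A^\sigma v}{A^\sigma u}$ via \eqref{Pesp0-30}, and take imaginary parts to obtain an identity of the form $\lambda\|A^\sigma u\|^2={\rm Im}\{\dual{h}{A^\sigma u}-\dual{A\theta}{A^\sigma u}+\dual{f}{A^{2\sigma}u}\}-\lambda\,{\rm Re}\dual{\theta}{A^\sigma u}$, where the bracketed terms are controlled by Cauchy--Schwarz together with \eqref{GevreyC1Eq001}, \eqref{Pdis0-10} and item (ix) of Lemma \ref{Lemma002Exp0} (note $\tfrac{2\sigma+1}{2}=\sigma+\tfrac12$). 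The awkward term $\lambda\,{\rm Re}\dual{\theta}{A^\sigma u}=\lambda\,{\rm Re}\dual{A^\sigma\theta}{u}$ I would reorganise through \eqref{Pesp0-20} (i.e.\ $A^\sigma\theta=i\lambda v+A^2u-g$), the relation $i\lambda^2\dual{v}{u}=-\lambda\|v\|^2-\lambda\dual{v}{f}$ coming from \eqref{Pesp0-10}, and \eqref{Exp007E}, arriving at $\lambda\,{\rm Re}\dual{\theta}{A^\sigma u}=-\lambda\|\theta\|^2+(\text{terms}\le C_\delta\|F\|\|U\|)$. Combining, $|\lambda|\|A^\sigma u\|^2\le|\lambda|\|\theta\|^2+C_\delta\|F\|\|U\|$, so (i) is reduced to $|\lambda|\|\theta\|^2\le C_\delta\|F\|\|U\|$; that bound I would get by taking the real part of \eqref{Exp007A} and inserting \eqref{Exp007B} and \eqref{Exp007C} (the imaginary terms $i\lambda^2\|A^{-1/2}\theta\|^2$ and $i\|A^{(2\sigma-1)/2}v\|^2$ drop), the only genuinely new input being that $\|A^{\sigma-1/2}v\|$ is bounded, which follows from item (viii) of Lemma \ref{Lemma002Exp0} and the embeddings. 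Item (ii) is then immediate, since for $\tfrac12<\sigma<1$ one has $\tfrac{\sigma-1}{2\sigma-1}<0$, so $A^{(\sigma-1)/(2\sigma-1)}$ is bounded; alternatively (ii) follows directly by pairing \eqref{Pesp0-30} with $\lambda A^{-1/(2\sigma-1)}\theta$ and again using item (ix).

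For items (iii) and (iv), with $1<\sigma<\tfrac32$: here $\tfrac{1-\sigma}{2}<0$ and $\tfrac{2\sigma^2-5\sigma+5}{2}<1$, so both quantities are lower order and the operators involved are well defined. For (iii) I would pair \eqref{Pesp0-30} with $\lambda A^{-\sigma}\theta$, use the cancellation $\dual{A^\sigma v}{A^{-\sigma}\theta}=\dual{v}{\theta}$, and take the real part to get $\lambda\|A^{(1-\sigma)/2}\theta\|^2={\rm Re}\{\lambda\dual{h}{A^{-\sigma}\theta}-\lambda\dual{v}{\theta}\}$; expanding $\lambda\dual{v}{\theta}$ by \eqref{Pesp0-20} ($\lambda v=-i(g-A^2u+A^\sigma\theta)$) and $\lambda\dual{h}{A^{-\sigma}\theta}$ by \eqref{Pesp0-30} reduces the task to bounding $\dual{A^2u}{A^{-\sigma}\theta}$‑type pairings and $\|A^{1/2}v\|$, the latter supplied by Lemma \ref{Lemma7A} for $\tfrac76<\sigma<\tfrac32$ and by item (viii) of Lemma \ref{Lemma002Exp0} for $1<\sigma\le\tfrac76$, and the former by items (vi), (vii), (x) of Lemma \ref{Lemma002Exp0}; this forces the same subdivision of $(1,\tfrac32)$ at $\tfrac76$ that already appears there. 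For (iv), the exponent is engineered so that pairing \eqref{Pesp0-20} with $\lambda A^{(2\sigma-3)(\sigma-1)}u$ produces exactly $\lambda\|A^{(2\sigma^2-5\sigma+5)/2}u\|^2$ on the left once \eqref{Pesp0-10} is used once on the $i\lambda^2\dual{v}{A^{c}u}$ term; the surviving $\lambda$‑weighted pieces are then absorbed by Young's inequality together with the weighted bounds of Lemma \ref{Lemma002Exp0}, Lemma \ref{Lemma7A} and part (iii).

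The genuine obstacle is the one visible in (i)--(iii): every ``reversible'' step — substituting $i\lambda u=v+f$ by itself, or pairing an equation with a $\lambda$‑multiple of its own unknown — collapses to a tautology, so the second equation \eqref{Pesp0-20} must be inserted at precisely the right place to break the cycle, and the fractional powers of $A$ it leaves behind must be matched against the list of already‑proven weighted estimates. Keeping track of which of those estimates is valid on which subinterval of $\sigma$ (the breakpoints $\tfrac76,\tfrac{11}{8}$ of Lemma \ref{Lemma002Exp0}, and the auxiliary value $\tfrac98$ appearing in the splittings for (iii)--(iv)) is where the bookkeeping concentrates, and it is what dictates the precise exponents $\tfrac{\sigma-1}{2\sigma-1}$, $\tfrac{1-\sigma}{2}$ and $\tfrac{2\sigma^2-5\sigma+5}{2}$ in the statement.
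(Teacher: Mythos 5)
Your items (i) and (ii) are workable, although by a detour: the identity you start from is algebraically the paper's \eqref{Eq01Lemma01G} paired with $A^\sigma u$, and the ``awkward'' term $\lambda\,{\rm Re}\dual{\theta}{A^\sigma u}$ does not need \eqref{Pesp0-20} or \eqref{Exp007E} at all --- writing $\lambda u=-i(v+f)$ turns it into ${\rm Im}\dual{A^{1/2}\theta}{A^{\sigma-1/2}(v+f)}$, which item (viii) of Lemma \ref{Lemma002Exp0} (since $\sigma-\tfrac12<\tfrac{8\sigma-3}{8}$) and \eqref{Pdis0-10} control directly. Your auxiliary claim $|\lambda|\|\theta\|^2\le C_\delta\|F\|_{\mathcal H_0}\|U\|_{\mathcal H_0}$ for $\tfrac12<\sigma<1$ does follow from the real part of \eqref{Exp007A} with \eqref{Exp007B}--\eqref{Exp007C} plus item (viii), and it renders (ii) trivial since $\tfrac{\sigma-1}{2\sigma-1}<0$; that is a legitimate (in fact stronger) replacement for the paper's direct computation with the test function $A^{\frac{2(\sigma-1)}{2\sigma-1}}\theta$.

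Items (iii) and (iv) have genuine gaps. In (iii), after pairing \eqref{Pesp0-30} with $\lambda A^{-\sigma}\theta$ the coupling term is $\lambda\dual{v}{\theta}$ with the $A^{-\sigma}$ already consumed by the cancellation against $A^\sigma v$; expanding $\lambda v$ via \eqref{Pesp0-20} therefore produces ${\rm Im}\dual{A^2u}{\theta}=\,{\rm Im}\dual{A^{3/2}u}{A^{1/2}\theta}$, not the harmless ``$\dual{A^2u}{A^{-\sigma}\theta}$--type'' pairing you describe, and no available estimate controls $\|A^{3/2}u\|$ (items (vii)/(x) only reach $\|A^{\frac{5-2\sigma}{2}}u\|$ with $\tfrac{5-2\sigma}{2}<\tfrac32$ for $\sigma>1$, and $\|A\theta\|$ is likewise out of reach). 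The step is repairable --- expand $\lambda\theta$ through \eqref{Pesp0-30} instead, which yields only $\dual{A^{1/2}v}{A^{1/2}\theta}$ and $\|A^{\sigma/2}v\|^2$ --- or, as the paper does, pair with the $\lambda$-free multiplier $A^{1-\sigma}\theta$ and take the \emph{imaginary} part, so that $i\lambda\|A^{\frac{1-\sigma}{2}}\theta\|^2$ itself is the left-hand side. In (iv) the defect is worse: pairing \eqref{Pesp0-20} with $\lambda A^{(2\sigma-3)(\sigma-1)}u$ and substituting \eqref{Pesp0-10} into $i\lambda^2\dual{v}{A^{c}u}$ ($c=2\sigma^2-5\sigma+3$) leaves the real term $\lambda\|A^{c/2}v\|^2$ with $c/2\in[-\tfrac18,0]$, i.e.\ essentially $|\lambda|\,\|v\|^2$; none of Lemma \ref{Lemma002Exp0}, Lemma \ref{Lemma7A} or part (iii) bounds this by $C_\delta\|F\|_{\mathcal H_0}\|U\|_{\mathcal H_0}$, and no such bound can hold on $(1,\tfrac32)$ since together with \eqref{ItemiLemma07} it would force analyticity, contradicting Theorem \ref{AnaliticidadePlaca-200}. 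The device you are missing is the paper's first move: substitute $v=i\lambda u-f$ into the heat equation to get \eqref{Eq01Lemma01G}, so that the $\lambda$-bearing quadratic term is $i\lambda\|A^{\frac{2\sigma^2-5\sigma+5}{2}}u\|^2$ itself (no $i\lambda^2\dual{v}{\cdot}$ ever appears), and the only remaining $\lambda$-weighted cross term, $i\lambda\dual{\theta}{A^{2\sigma^2-6\sigma+5}u}$, is tamed by the reverse substitution together with the bound on $\|A^{1/2}v\|$.
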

\begin{proof} $\mathbf{ (i)}$ Using \eqref{Pesp0-10} in \eqref{Pesp0-30}, we get
\begin{equation}
\label{Eq01Lemma01G}
i\lambda A^\sigma u-A^\sigma f+i\lambda\theta+A\theta=h.
\end{equation}
Performing the product of duality between the equation \eqref{Eq01Lemma01G} and $A^\sigma u$, using \eqref{Pesp0-10} and taking advantage of the self-adjointness of the powers of the operator $A$, we get
\begin{eqnarray*}
i\lambda\|A^\sigma u\|^2& = & \dual{A^\sigma f}{A^\sigma u}+\dual{A^\frac{1}{2}\theta}{A^{\sigma-\frac{1}{2}}(i\lambda u)}-\dual{A^\frac{1}{2}\theta}{A^\frac{2\sigma+1}{2}u}+\dual{h}{A^\sigma u}\\
&= & \dual{A^\sigma f}{A^\sigma u}+\dual{A^\frac{1}{2}\theta}{A^{\sigma-\frac{1}{2}}v}+\dual{\theta}{A^\sigma f} -\dual{A^\frac{1}{2}\theta}{A^\frac{2\sigma+1}{2}u}\\
& &+\dual{h}{A^\sigma u}, 
\end{eqnarray*}
as $\frac{1}{2}<\sigma< 1$ and $\sigma-\frac{1}{2}<\frac{8\sigma-3}{8}$, using items $(viii)$ and $(ix)$ of Lemma \ref{Lemma002Exp0} and \eqref{Pdis0-10},  applying Cauchy-Schuwarz and Young inequalities and  continuous embedding $\mathfrak{D}(A^{\tau_2})\hookrightarrow  \mathfrak{D}(A^{\tau_1}), \;\tau_2 >\tau_1$,  finish to proof this item.
\\
 Proof. $\mathbf{(ii)}$ Performing the product of duality between the equation \eqref{Pesp0-30} and $A^\frac{2(\sigma-1)}{2\sigma-1} \theta$, using \eqref{Pesp0-10} and taking advantage of the self-adjointness of the powers of the operator $A$, we get
\begin{eqnarray}
\label{Eq02Lemma01G}
\hspace*{1.0cm} i\lambda\|A^\frac{\sigma-1}{2\sigma-1}\theta\|^2 &=&-\|A^\frac{4\sigma-3}{2(2\sigma-1)}\theta\|^2-\dual{A^\frac{4\sigma^2-3}{2(2\sigma-1)} v}{A^\frac{1}{2}\theta}+\dual{h}{A^\frac{2(\sigma-1)}{2\sigma-1}\theta}
\end{eqnarray}
As for $\frac{1}{2}<\sigma <1$, we have $\frac{4\sigma^2-3}{2(2\sigma-1)}\leq \frac{8\sigma-3}{8}$ and $\frac{2(\sigma-1)}{2\sigma-1}<0$, taking imaginary part, applying Cauchy-Schuwarz and Young inequalities,  continuous embedding $\mathfrak{D}(A^{\tau_2})\hookrightarrow  \mathfrak{D}(A^{\tau_1}), \;\tau_2 >\tau_1$ and item $(viii)$ Lemma \ref{Lemma002Exp0} and estimative \eqref{Pdis0-10},  finish to proof this item.\\
 Proof. $\mathbf{ (iii)}$ Now, performing the product of duality between the equation \eqref{Pesp0-30} and $A^{1-\sigma} \theta$, using \eqref{Pesp0-10} and taking advantage of the self-adjointness of the powers of the operator $A$, we get
\begin{equation}
\label{Eq03Lemma01G}
i\lambda\|A^\frac{1-\sigma}{2}\theta\|^2 = -\|A^\frac{2-\sigma}{2}\theta\|^2-\dual{A^\frac{1}{2}v}{A^\frac{1}{2}\theta}+\dual{h}{A^{1-\sigma}\theta}
\end{equation}
As $\frac{1}{2}\leq \frac{8\sigma-3}{8}\Longleftrightarrow \sigma\geq \frac{7}{8}$, 
using continuous immersions and   item $(viii)$ of Lemma \ref{Lemma002Exp0}, we have 
\begin{equation}
\label{Eq04Lemma01G}
\|A^\frac{1}{2}v\|^2\leq C_\delta \|F\|_{\mathcal{H}_0}\|U\|_{\mathcal{H}_0}\qquad {\rm for}\qquad \frac{7}{8}\leq \sigma\leq \frac{11}{8}.
\end{equation}
On the other hand,  performing the product of duality between the equation \eqref{Pesp0-30} and $A^{1-\sigma}v$, using \eqref{Pesp0-20} and taking advantage of the self-adjointness of the powers of the operator $A$, we get
\begin{multline*}
\|A^\frac{1}{2} v\|^2=\dual{A^\frac{1}{2}\theta}{A^{\frac{1}{2}-\sigma}i\lambda v}-\dual{A^\frac{1}{2}\theta}{A^{\frac{3}{2}-\sigma}v}+\dual{h}{A^{1-\sigma}v}\\
=-\dual{A^\frac{1}{2}\theta}{A^{\frac{5}{2}-\sigma}u}+\|A^\frac{1}{2}\theta\|^2+\dual{A^{1-\sigma}\theta}{g}-\dual{A^\frac{1}{2}\theta}{A^{\frac{3}{2}-\sigma}v}\\
+\dual{h}{A^{1-\sigma}v}
\end{multline*}
Applying Cauchy-Schuwarz and Young inequalities and using estimates \eqref{Pdis0-10}, item $(vii)$ of Lemma \ref{Lemma002Exp0}, for $\varepsilon>0$, exists $C_\varepsilon>0$ such that
\begin{equation}\label{Eq05Lemma01G}
\|A^\frac{1}{2}v\|^2\leq C_\delta \|F\|_{\mathcal{H}_0}\|U\|_{\mathcal{H}_0}+\varepsilon\|A^{\frac{3}{2}-\sigma}v\|^2\qquad {\rm for}\qquad \frac{7}{6}<\sigma<\frac{3}{2},
\end{equation}
then, as for $\frac{7}{6}<\sigma<\frac{3}{2}$ we have $\frac{3}{2}-\sigma<\frac{1}{2}$, using  continuous embedding $\mathfrak{D}(A^{\tau_2})\hookrightarrow  \mathfrak{D}(A^{\tau_1}), \;\tau_2 >\tau_1$ in \eqref{Eq05Lemma01G}, we arrive
\begin{equation}
\label{Eq06Lemma01G}
\|A^\frac{1}{2}v\|^2\leq C_\delta \|F\|_{\mathcal{H}_0}\|U\|_{\mathcal{H}_0}\qquad {\rm for}\qquad \frac{7}{6}<\sigma<\frac{3}{2},
\end{equation} 

Taking real part in \eqref{Eq03Lemma01G} and applying Cauchy-Schuwarz and Young inequalities and using estimates \eqref{Pdis0-10},  \eqref{Eq04Lemma01G} and \eqref{Eq06Lemma01G}, we finished the test of this item.
\\
 Proof. $\mathbf{ (iv)}$ Now, performing the product of duality between the equation \eqref{Eq01Lemma01G} and $A^{2\sigma^2-6\sigma+5}u$, using \eqref{Pesp0-10} and taking advantage of the self-adjointness of the powers of the operator $A$, we get
\begin{multline}
\label{Eq07Lemma01G}
i\lambda\|A^\frac{2\sigma^2-5\sigma+5}{2}u\|^2 = \dual{Af}{A^{2\sigma^2-5\sigma+4}u}+\dual{A^\frac{1}{2}\theta}{A^{2\sigma^2-6\sigma+\frac{9}{2}}i\lambda u}\\
-\dual{A^\frac{1}{2}\theta}{A^{2\sigma^2-6\sigma+\frac{11}{2}}u} +\dual{h}{A^{2\sigma^2-6\sigma+5}u},
\end{multline}
using Cauchy-Schuwarz and Young inequalities in \eqref{Eq07Lemma01G}, we get
\begin{multline*}
|\lambda|\|A^\frac{2\sigma^2-5\sigma+5}{2}u\|^2\leq C\{\|Af\|\|A^{2\sigma^2-5\sigma+4}u\|+\|h\|\|A^{2\sigma^2-6\sigma+5}u\|+\|A^\frac{1}{2}\theta\|^2\\+\|A^{2\sigma^2-6\sigma+\frac{11}{2}}u\|^2\}
+|\dual{A^\frac{1}{2}\theta}{A^{2\sigma^2-6\sigma+\frac{9}{2}}v}|+|\dual{A^\frac{1}{2}\theta}{A^{2\sigma^2-6\sigma+\frac{9}{2}}f}|
\end{multline*}
As, for $1<\sigma<\frac{3}{2}$, we have: $2\sigma^2-6\sigma+5\leq 2\sigma^2-5\sigma+4\leq 1$,  $2\sigma^2-6\sigma+\frac{11}{2}\leq \frac{5-2\sigma}{2}$ and $2\sigma^2-6\sigma+\frac{9}{2}\leq\frac{1}{2}<1$.  Using now Young inequality,  continuous immersions and  estimates: \eqref{Eq04Lemma01G}, \eqref{Eq06Lemma01G},  items $(vii)$ and $(x)$ of Lemma \ref{Lemma002Exp0}, finish proof this item.

\end{proof}
\begin{lem}\label{Lemma002Gevrey}
Let  $\delta > 0$. There exists $C_\delta > 0$ such that the solutions of the system \eqref{Eq1.1}  for $|\lambda|>\delta$  and $\omega>0$,   satisfy
\begin{equation*}
 |\lambda|\|A^\frac{4\sigma-1}{4} u\|^2 \leq  C_\delta\|F\|_{\mathcal{H}_\omega}\|U\|_{\mathcal{H}_\omega}\quad{\rm  for}\quad 1<\sigma< \frac{5}{4}.
\end{equation*}
\end{lem}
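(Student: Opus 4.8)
The plan is to mimic the frequency-domain argument used for the $\omega=0$ analogue in Lemma~\ref{Lemma001Gevrey}(i), but with the multiplier tuned so that the surviving terms land exactly in the already-established estimates of Lemma~\ref{Lemma002Exp}. First I would eliminate $v$ from the resolvent system: inserting $v=i\lambda u-f$ from \eqref{Pesp-10} into \eqref{Pesp-30} gives the reduced identity
\begin{equation*}
i\lambda A^\sigma u+i\lambda\theta+A\theta=h+A^\sigma f.
\end{equation*}
Then I would take the duality product of this identity with $A^{\sigma-\frac12}u$ (legitimate for $U\in\mathfrak D(\mathbb A_\omega)$, exactly as in the proof of Lemma~\ref{Lemma002Exp}(v)) and use the self-adjointness of the powers of $A$. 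Since $\frac{4\sigma-1}{4}=\sigma-\frac14$, the first term becomes $i\lambda\|A^{\frac{4\sigma-1}{4}}u\|^2$, so one arrives at
\begin{equation*}
i\lambda\|A^{\frac{4\sigma-1}{4}}u\|^2=-i\lambda\dual{\theta}{A^{\sigma-\frac12}u}-\dual{A^{\frac12}\theta}{A^\sigma u}+\dual{h}{A^{\sigma-\frac12}u}+\dual{A^{\sigma-\frac12}f}{A^\sigma u}.
\end{equation*}

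The reason for choosing the multiplier $A^{\sigma-\frac12}u$ — and the reason the exponent $\frac{4\sigma-1}{4}$ appears — is that the second-order term reshuffles to $\dual{A\theta}{A^{\sigma-\frac12}u}=\dual{A^{\frac12}\theta}{A^\sigma u}$, where $\|A^\sigma u\|$ is precisely the quantity controlled by Lemma~\ref{Lemma002Exp}(v) on the range $1<\sigma<\frac54$, and $\|A^{\frac12}\theta\|=\|\theta\|_1$ is controlled by \eqref{Pdis-10}; hence this term is $O(\|F\|_{\mathcal H_\omega}\|U\|_{\mathcal H_\omega})$. For the term $i\lambda\dual{\theta}{A^{\sigma-\frac12}u}$ I would once more use \eqref{Pesp-10} to replace $i\lambda u$ by $v+f$, turning it into $\dual{A^{\frac12}\theta}{A^{\sigma-1}v}+\dual{\theta}{A^{\sigma-\frac12}f}$; here $\|A^{\sigma-1}v\|$ is bounded by Lemma~\ref{Lemma002Exp}(iv), and since $\sigma-\frac12<1$ the embedding $\mathfrak D(A)\hookrightarrow\mathfrak D(A^{\sigma-\frac12})$ gives $\|A^{\sigma-\frac12}f\|\le C\|f\|_2$, so this term is again $O(\|F\|_{\mathcal H_\omega}\|U\|_{\mathcal H_\omega})$. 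The last two terms are handled the same way: $\sigma-\frac12<1\le 2$ gives $\|A^{\sigma-\frac12}u\|\le C\|u\|_2$ and $\|A^{\sigma-\frac12}f\|\le C\|f\|_2$, while $\|A^\sigma u\|$ is once more Lemma~\ref{Lemma002Exp}(v).

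Assembling these bounds with Cauchy-Schwarz and Young's inequality, every term on the right is $\le C_\delta\|F\|_{\mathcal H_\omega}\|U\|_{\mathcal H_\omega}$, and none of them reproduces $|\lambda|\|A^{\frac{4\sigma-1}{4}}u\|^2$, so no absorption step is needed: taking absolute values (equivalently, the imaginary part) of the identity and dividing gives $|\lambda|\|A^{\frac{4\sigma-1}{4}}u\|^2\le C_\delta\|F\|_{\mathcal H_\omega}\|U\|_{\mathcal H_\omega}$, as claimed. I do not expect a genuine obstacle here; the only delicate point is exponent bookkeeping, namely checking that $\sigma-\frac14$, $\sigma-\frac12$, $\sigma-1$ and $\sigma$ all remain in ranges where either the phase-space embeddings or Lemma~\ref{Lemma002Exp}(iv) and (v) apply, which is exactly the role of the restriction $1<\sigma<\frac54$.
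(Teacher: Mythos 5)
Your proposal is correct and follows essentially the same route as the paper: the paper likewise substitutes \eqref{Pesp-10} into \eqref{Pesp-30}, pairs the reduced identity with $A^{\frac{2\sigma-1}{2}}u=A^{\sigma-\frac12}u$, replaces $i\lambda u$ by $v+f$ in the resulting $\theta$-term, and closes the estimate with items $(iv)$ and $(v)$ of Lemma~\ref{Lemma002Exp}, the dissipation bound \eqref{Pdis-10}, and the continuous embeddings. The only difference is the cosmetic one of how the powers of $A$ are distributed across the $f$-terms, which does not affect the argument.
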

\begin{proof}
Using \eqref{Pesp-10} in \eqref{Pesp-30}, we get 
\begin{equation}
\label{Eq01Lemma02G}
i\lambda A^\sigma u-A^\sigma f+i\lambda\theta+A\theta=h.
\end{equation}
Performing the product of duality between the equation \eqref{Eq01Lemma02G} and $A^\frac{2\sigma-1}{2} u$, using \eqref{Pesp-10} and taking advantage of the self-adjointness of the powers of the operator $A$, we get
\begin{eqnarray*}
i\lambda\|A^\frac{4\sigma-1}{4} u\|^2& = & \dual{A f}{A^{2\sigma-\frac{3}{2}} u}+\dual{A^\frac{1}{2}\theta}{A^{\sigma-1}(i\lambda u)}-\dual{A^\frac{1}{2}\theta}{A^\sigma u}\\
& & +\dual{h}{A^\frac{2\sigma-1}{2} u}\\
&= & \dual{Af}{A^{2\sigma-\frac{3}{2}} u}+\dual{A^\frac{1}{2}\theta}{A^{\sigma-1}v}+\dual{A^\frac{1}{2}\theta}{A^{\sigma-1} f}\\
& &  -\dual{A^\frac{1}{2}\theta}{A^\sigma u}+\dual{h}{A^\frac{2\sigma-1}{2} u}, 
\end{eqnarray*}
as  for $1<\sigma< \frac{5}{4}$, we have:  and $2\sigma-\frac{3}{2}<1, \sigma-1<1$ and $\frac{2\sigma-1}{2}<1$,  using items $(iv)$ and $(v)$  of Lemma \ref{Lemma002Exp} and  estimative  \eqref{Pdis-10},  applying Cauchy-Schuwarz and Young inequalities and  continuous embedding $\mathfrak{D}(A^{\tau_2})\hookrightarrow  \mathfrak{D}(A^{\tau_1}), \;\tau_2 >\tau_1$,  finish to proof this lemma.

\end{proof}

Our main result in this subsection is as follows:
\begin{theorem} \label{TGevrey}
Let  $S_\omega(t)=e^{t \mathbb{A}_\omega}$  strongly continuos-semigroups of contractions on the Hilbert space $ \mathcal{H}_\omega$, the semigroups $S_\omega(t)$ are  of Grevrey class $s_\omega$ for every:
$$  \left\{ \begin{array}{ccc}
&(i)\; & s_{01}> \frac{1}{\phi_{01}}=\frac{1}{2\sigma-1}\quad{\rm for}\quad \omega=0\quad{\rm and}\quad \frac{1}{2}<\sigma< 1, \\\\
&(ii)\; & s_{02} > \frac{1}{\phi_{02}}=\sigma\quad{\rm for}\quad \omega=0\quad {\rm and}\quad 1<\sigma <\dfrac{3}{2},
\\\\
&(iii)\; & s_\omega> \frac{1}{\phi_\omega}=\frac{1}{4(\sigma-1)}\quad{\rm for}\quad \omega>0\quad{\rm and}\quad 1<\sigma  <\frac{5}{4}.
\end{array}\right.$$
If the estimates to follow are verified
    \begin{equation}\label{Eq1.6Tebon2020}
 \left\{ \begin{array}{ccc}
 (i)\hspace*{-0.2cm}&{\rm for}&\hspace*{-0.2cm} \omega=0,  \; \frac{1}{2}<\sigma< 1, \quad \limsup\limits_{|\lambda|\to\infty} |\lambda |^{2\sigma-1} ||(i\lambda I-\mathbb{A}_0)^{-1}||_{\mathcal{L}( \mathcal{H}_0)} <\infty, \\\\
(ii)\hspace*{-0.2cm}& {\rm for}&\hspace*{-0.2cm} \omega=0,\; 1<\sigma< \dfrac{3}{2}, \quad \limsup\limits_{|\lambda|\to\infty} |\lambda |^\frac{1}{\sigma} ||(i\lambda I-\mathbb{A}_0)^{-1}||_{\mathcal{L}( \mathcal{H}_0)} <\infty,   \\\\
(iii)\hspace*{-0.2cm}& {\rm for}&\hspace*{-0.2cm}  \omega>0, \; 1<\sigma  <\frac{5}{4}, \quad \limsup\limits_{|\lambda|\to\infty} |\lambda |^{4(\sigma-1)} ||(i\lambda I-\mathbb{A}_\omega)^{-1}||_{\mathcal{L}( \mathcal{H}_\omega)}<\infty.
\end{array}   \right.
    \end{equation}
\end{theorem}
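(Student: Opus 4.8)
The plan is to apply Theorem \ref{Theorem1.2Tebon} with $\mathbb{B}=\mathbb{A}_\omega$ in each of the three regimes, taking respectively $\phi_{01}=2\sigma-1$, $\phi_{02}=1/\sigma$, and $\phi_\omega=4(\sigma-1)$. By Remark \ref{EquivGevrey} it suffices, for $|\lambda|>\delta$, to establish the resolvent estimate in the equivalent form
\begin{equation*}
|\lambda|^{\phi}\|U\|^2_{\mathcal{H}_\omega}\leq C_\delta\|F\|_{\mathcal{H}_\omega}\|U\|_{\mathcal{H}_\omega},
\end{equation*}
which, after expanding the norm $\|U\|^2_{\mathcal{H}_\omega}$ via \eqref{NormOmega}, reduces to bounding $|\lambda|^{\phi}\|u\|^2_2$, $|\lambda|^{\phi}\|v\|^2$ (or $|\lambda|^{\phi}\|v\|^2_{\mathbf{H}^1}$ when $\omega>0$), and $|\lambda|^{\phi}\|\theta\|^2$ separately by $C_\delta\|F\|_{\mathcal{H}_\omega}\|U\|_{\mathcal{H}_\omega}$. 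So the whole argument is a bookkeeping exercise: feed the right auxiliary estimates from Lemmas \ref{Lemma002Exp0}, \ref{Lemma7A}, \ref{Lemma002Exp}, \ref{Lemma001Gevrey}, \ref{Lemma002Gevrey} into these three target quantities, using the interpolation inequality of Theorem \ref{Lions-Landau-Kolmogorov} to convert control of a high power $\|A^r\theta\|$ or $\|A^r u\|$ at frequency $|\lambda|$ into control of the genuine phase-space norms, each multiplied by $|\lambda|^{\phi}$.

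For case (i), $\omega=0$ and $\frac{1}{2}<\sigma<1$, so $\phi=2\sigma-1\in(0,1)$. The starting points are: Lemma \ref{Lemma001Gevrey}(i) gives $|\lambda|\|A^\sigma u\|^2\leq C_\delta\|F\|\|U\|$, and Lemma \ref{Lemma001Gevrey}(ii) gives $|\lambda|\|A^{\frac{\sigma-1}{2\sigma-1}}\theta\|^2\leq C_\delta\|F\|\|U\|$. First I would interpolate $\|u\|_2=\|Au\|$ between $\|A^{\sigma}u\|$ (controlled with a full power of $|\lambda|$) and $\|u\|$ (bounded by $\|U\|_{\mathcal{H}_0}$ up to the dissipation estimate \eqref{Pdis0-10}, or just by the $\mathbf{H}$-norm since $\sigma<1$ means $A^{\sigma}u$ dominates $Au$ — actually $Au$ is the stronger norm here, so interpolate $\|A^{\sigma}u\|$ between $\|Au\|=\|u\|_2$ and $\|\theta\|$ via the elliptic-type estimate \eqref{GC1Eq003}), yielding $|\lambda|^{2\sigma-1}\|u\|^2_2\leq C_\delta\|F\|\|U\|$; then similarly interpolate $\|\theta\|$ between $\|A^{\frac{\sigma-1}{2\sigma-1}}\theta\|$ (note $\frac{\sigma-1}{2\sigma-1}<0$ for this range, so this is a weaker-than-$\mathbf{H}$ norm) and $\|\theta\|_1=\|A^{1/2}\theta\|$ (bounded by \eqref{Pdis0-10}), obtaining $|\lambda|^{2\sigma-1}\|\theta\|^2\leq C_\delta\|F\|\|U\|$; finally use item (i)$_1$/(i)$_2$ of Lemma \ref{Lemma002Exp0}, which trades $|\lambda|\|v\|^2$ against $|\lambda|[\|u\|^2_2+\|\theta\|^2]$, to close the loop on $|\lambda|^{2\sigma-1}\|v\|^2$ (the factor $|\lambda|^{2\sigma-1}\leq|\lambda|$ for $|\lambda|$ large is what makes this legitimate). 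For case (ii), $\omega=0$ and $1<\sigma<\frac{3}{2}$, $\phi=1/\sigma\in(\frac{2}{3},1)$; here I would use Lemma \ref{Lemma001Gevrey}(iii) for $|\lambda|\|A^{\frac{1-\sigma}{2}}\theta\|^2$ and Lemma \ref{Lemma001Gevrey}(iv) for $|\lambda|\|A^{\frac{2\sigma^2-5\sigma+5}{2}}u\|^2$, together with item (ii) of Lemma \ref{Lemma002Exp0} ($|\lambda|\|u\|^2_2\leq C|\lambda|\|A^{1-\sigma}\theta\|^2+\cdots$) and Lemma \ref{Lemma7A}, interpolating to extract the exponent $1/\sigma$ on the three phase-space norms and then closing with Lemma \ref{Lemma002Exp0}(i). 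For case (iii), $\omega>0$ and $1<\sigma<\frac{5}{4}$, $\phi=4(\sigma-1)\in(0,1)$; the engine is Lemma \ref{Lemma002Gevrey}, $|\lambda|\|A^{\frac{4\sigma-1}{4}}u\|^2\leq C_\delta\|F\|\|U\|$, interpolated against $\|u\|$ to produce $|\lambda|^{4(\sigma-1)}\|u\|^2_2$ (the exponent $4(\sigma-1)$ is exactly $\frac{(4\sigma-1)/4-1}{(4\sigma-1)/4-0}$ up to the precise interpolation weights, which is the point of the choice), then items (i) and (iv) of Lemma \ref{Lemma002Exp} for $|\lambda|^{\phi}\|\theta\|^2$ and for $\|v\|_{\mathbf{H}^1}$, and finally item (iii) of Lemma \ref{Lemma002Exp} to absorb $|\lambda|^{\phi}\|v\|^2_{\mathbf{H}^1}$ into $|\lambda|^{\phi}[\|u\|^2_2+\|\theta\|^2]$.

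The main obstacle is not any single estimate but verifying that the interpolation exponents line up exactly so that the fractional powers of $A$ appearing in the lemmas, when balanced against $\|u\|_2$, $\|v\|$, $\|\theta\|$ and against the $|\lambda|$-weight coming with them, produce precisely the advertised powers $|\lambda|^{2\sigma-1}$, $|\lambda|^{1/\sigma}$, $|\lambda|^{4(\sigma-1)}$ and no more — in other words, checking that the choices of test multipliers made throughout Section 3 were engineered so the bookkeeping closes. One should also double-check the endpoint behaviour: the estimates degenerate as $\sigma\to\frac{1}{2}^+$, $\sigma\to1$, and $\sigma\to\frac{5}{4}^-$ respectively (where $\phi\to0$ and the Gevrey order blows up), consistent with the loss of analyticity established in Theorems \ref{AnaliticidadePlaca-200} and \ref{AnaliticidadePlaca-201}, and the sharpness of these classes is what the subsequent spectral analysis (via the sequences $F_n$ and the eigenvalue choices $\lambda_n$) is designed to confirm.
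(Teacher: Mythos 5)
Your overall strategy is the paper's: verify the resolvent estimates \eqref{Eq1.6Tebon2020} via Remark \ref{EquivGevrey}, obtain $|\lambda|^{\phi}\|u\|_2^2$ and $|\lambda|^{\phi}\|\theta\|^2$ by the interpolation inequality \eqref{ILLK} applied to the auxiliary estimates of Lemmas \ref{Lemma002Exp0}--\ref{Lemma002Gevrey}, and close the $v$-component with item $(i)_1$ of Lemma \ref{Lemma002Exp0} (resp.\ item $(iii)$ of Lemma \ref{Lemma002Exp}). Your treatment of $\theta$ in case $(i)$ (interpolating $\|\theta\|$ between $\|A^{\frac{\sigma-1}{2\sigma-1}}\theta\|$ and $\|A^{1/2}\theta\|$ with weight $\phi=2\sigma-1$) is exactly the paper's argument, as is the identification of Lemma \ref{Lemma001Gevrey}$(iii)$--$(iv)$ and Lemma \ref{Lemma002Gevrey} as the engines for cases $(ii)$ and $(iii)$.

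However, there is a genuine gap in your interpolation for $\|u\|_2$ in cases $(i)$ and $(iii)$: you propose to interpolate $\|Au\|$ between the $|\lambda|$-weighted quantity and a \emph{lower}-order norm ($\|u\|$, or ``$\|\theta\|$ via \eqref{GC1Eq003}''), but Theorem \ref{Lions-Landau-Kolmogorov} requires the intermediate exponent to lie between the two endpoints, and here $1$ lies \emph{above} both $\sigma$ (case $(i)$, $\sigma<1$) and $\frac{4\sigma-1}{4}=\sigma-\frac14$ (case $(iii)$, $\sigma<\frac54$), and above $0$. You even notice this (``actually $Au$ is the stronger norm here'') but your proposed fix --- interpolating $\|A^\sigma u\|$ between $\|Au\|$ and $\|\theta\|$ --- runs in the wrong direction and cannot yield $|\lambda|^{2\sigma-1}\|u\|_2^2\leq C_\delta\|F\|_{\mathcal{H}_0}\|U\|_{\mathcal{H}_0}$. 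The missing ingredients are the \emph{higher}-order endpoints that the auxiliary lemmas were engineered to provide without any $|\lambda|$-weight: in case $(i)$ one must write $1=(2\sigma-1)\sigma+(2-2\sigma)\frac{2\sigma+1}{2}$ and use $\|A^{\frac{2\sigma+1}{2}}u\|^2\leq C_\delta\|F\|_{\mathcal{H}_0}\|U\|_{\mathcal{H}_0}$ from item $(ix)$ of Lemma \ref{Lemma002Exp0}; in case $(iii)$ one must write $1=4(\sigma-1)\frac{4\sigma-1}{4}+(5-4\sigma)\sigma$ and use $\|A^{\sigma}u\|^2\leq C_\delta\|F\|_{\mathcal{H}_\omega}\|U\|_{\mathcal{H}_\omega}$ from item $(v)$ of Lemma \ref{Lemma002Exp}; similarly in case $(ii)$ the second endpoint is $\|A^{\frac{5-2\sigma}{2}}u\|$ from items $(vii)$ and $(x)$ of Lemma \ref{Lemma002Exp0}, not item $(ii)$. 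Your own consistency check in case $(iii)$, which produces $\frac{4\sigma-5}{4\sigma-1}$ (negative on the relevant range) instead of $4(\sigma-1)$, already signals that the endpoints you chose cannot reproduce the advertised exponents. With the correct pairs of endpoints the weights line up exactly and the rest of your bookkeeping goes through as described.
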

\begin{proof}
From \eqref{EquivAnaliticity} ( Remark\eqref{EquivGevrey}).  To show the estimates of \eqref{Eq1.6Tebon2020},  it suffices to show are equivalent to, let $\delta>0$.  There exists   $C_\delta>0$ such that 
\begin{equation}\label{EqEquiv1.6Tebou}
\hspace*{-0.4cm} \left\{\begin{array}{c}
(i)\;{\rm for}\quad \omega=0\quad{\rm and}\quad \dfrac{1}{2}<\sigma< 1, \quad  |\lambda |^{2\sigma-1} ||U||^2_{\mathcal{H}_0} \leq C_\delta\|F\|_{\mathcal{H}_0}\|U\|_{\mathcal{H}_0}, \\\\
\hspace*{-0.8cm}(ii)\; {\rm for}\quad \omega=0\quad{\rm and}\quad 1<\sigma< \dfrac{3}{2}, \quad  |\lambda |^\frac{1}{\sigma}||U||^2_{\mathcal{H}_0} \leq C_\delta\|F\|_{\mathcal{H}_0}\|U\|_{\mathcal{H}_0}, \\\\
(iii)\;{\rm for}\quad \omega>0\quad{\rm and}\quad 1<\sigma < \dfrac{5}{4}, \quad  |\lambda |^{4(\sigma-1)} ||U||^2_{ \mathcal{H}_\omega} \leq C_\delta\|F\|_{\mathcal{H}_\omega}\|U\|_{\mathcal{H}_\omega}.
\end{array}   \right.
\end{equation}
 
 {\bf Proof case(i): $\omega=0$ and $\frac{1}{2}<\sigma<1$ :}\\

We are going to initially prove that for $\frac{1}{2}<\sigma<1$, it is verified:  
\begin{equation}\label{EqPGU0}
|\lambda|^{2\sigma-1}\|u\|^2_2\leq C_\delta\|F\|_{\mathcal{H}_0}\|U\|_{\mathcal{H}_0}\quad{\rm and}\quad |\lambda|^{2\sigma-1}\|\theta\|^2\leq C_\delta\|F\|_{\mathcal{H}_0}\|U\|_{\mathcal{H}_0}.
\end{equation}

 As for $\frac{1}{2}< \sigma < 1$, we have $1\in[\sigma,\frac{2\sigma+1}{2}]$. We are going to use an interpolation inequality. Since
\begin{equation*}
1=\phi\sigma+(1-\phi)\bigg(\dfrac{2\sigma+1}{2}\bigg),\quad{\rm for}\quad \phi=2\sigma-1, 
\end{equation*}
using inequalities of item $(ix)$ Lemma \ref{Lemma002Exp0} and Lemma \ref{Lemma001Gevrey},  we get that
\begin{eqnarray*}
\|u\|^2_2 &\leq & C\|A^\sigma u\|^{2\sigma-1}\|A^\frac{2\sigma+1}{2}u\|^{2-2\sigma}\\
&\leq &C_\delta|\lambda|^{1-2\sigma}\{\|F\|_{\mathcal{H}_0}\|U\|_{\mathcal{H}_0} \}^{2\sigma-1}\{\|F\|_{\mathcal{H}_0}\|U\|_{\mathcal{H}_0}\}^{2-2\sigma}.
\end{eqnarray*}
Where do we conclude the proof of \eqref{EqPGU0}$_1$.\\
As for $\frac{1}{2}< \sigma < 1$, we have $0\in[\frac{\sigma-1}{2\sigma-1},\frac{1}{2}]$. We are going to use an interpolation inequality. Since
\begin{equation*}
0=\phi\bigg(\dfrac{\sigma-1}{2\sigma-1}\bigg)+(1-\phi)\dfrac{1}{2},\quad{\rm for}\quad \phi=2\sigma-1\quad{\rm and}\quad 1-\phi=2-2\sigma, 
\end{equation*}
using inequalities of item $(ix)$ Lemma \ref{Lemma002Exp0} and Lemma \ref{Lemma001Gevrey},  we get that
\begin{eqnarray*}
\|\theta \|^2 &\leq & C\|A^\frac{\sigma-1}{2\sigma-1} \theta\|^{2\sigma-1}\|A^\frac{2\sigma+1}{2}\theta \|^{2-2\sigma}\\
&\leq &C_\delta|\lambda|^{1-2\sigma}\{\|F\|_{\mathcal{H}_0}\|U\|_{\mathcal{H}_0} \}^{2\sigma-1}\{\|F\|_{\mathcal{H}_0}\|U\|_{\mathcal{H}_0}\}^{2-2\sigma}.
\end{eqnarray*}
Where do we conclude the proof of \eqref{EqPGU0}$_2$.

   Equivalently
 \begin{equation}\label{Eq001GevreyT}
 |\lambda|\| u\|^2_2 \leq |\lambda|^{2(1-\sigma)} C_\delta\|F\|_{\mathcal{H}_0}\|U\|_{\mathcal{H}_0}\quad{\rm for }\quad \dfrac{1}{2} <\sigma < 1.
 \end{equation}
 and
  \begin{equation}\label{Eq001AGevreyT}
 |\lambda|\| \theta\|^2 \leq |\lambda|^{2(1-\sigma)} C_\delta\|F\|_{\mathcal{H}_0}\|U\|_{\mathcal{H}_0}\quad{\rm for }\quad \dfrac{1}{2} <\sigma < 1.
 \end{equation}
Applying \eqref{Eq001GevreyT} and \eqref{Eq001AGevreyT} in estimative \eqref{ItemiLemma07}( item (i)$_1$ Lemma \ref{Lemma002Exp0}), we have
    \begin{equation}\label{Eq002GevreyT}
    |\lambda|\|v\|^2 \leq|\lambda|^{2(1-\sigma)}  C_\delta\|F\|_{\mathcal{H}_0}\|U\|_{\mathcal{H}_0}\quad{\rm for }\quad \dfrac{1}{2} <\sigma < 1.
\end{equation}    
   Finally,  from estimates \eqref{Eq001GevreyT}--\eqref{Eq002GevreyT},  finish to proof  this is Case $(i)$.
 \\
 Proof {\bf case(ii): $\omega=0$ and $1<\sigma<\frac{3}{2}$ :}\\
   {\bf Remark:}  Next, for $1<\sigma<\frac{3}{2}$, we are going to test the following estimates:
   \begin{equation}
   \label{Eq003GevreyT}
   |\lambda|^\frac{1}{\sigma}\|u\|_2^2\leq C_\delta\|F\|_{\mathcal{H}_0}\|U\|_{\mathcal{H}_0}\qquad{\rm and  }\qquad  |\lambda|^\frac{1}{\sigma}\|\theta\|^2\leq C_\delta\|F\|_{\mathcal{H}_0}\|U\|_{\mathcal{H}_0}
   \end{equation} 
As for $1< \sigma < \frac{3}{2}$, we have $1\in[\frac{2\sigma^2-5\sigma+5}{2},\frac{5-2\sigma}{2}]$. We are going to use an interpolation inequality. Since
\begin{equation*}
1=\phi\bigg(\dfrac{2\sigma^2-5\sigma+5}{2}\bigg )+(1-\phi)\bigg(\dfrac{5-2\sigma}{2}\bigg),\quad{\rm for}\quad \phi=\frac{1}{\sigma}\quad{\rm and}\quad 1-\phi=\frac{\sigma-1}{\sigma}, 
\end{equation*}
using the items $(vii)$ and $(x)$ of the  Lemma \ref{Lemma002Exp0} and item $(iv)$ of Lemma \ref{Lemma001Gevrey},  we get that
\begin{eqnarray*}
\|u\|^2_2 &\leq & C\|A^\frac{2\sigma^2-5\sigma+5}{2} u\|^\frac{1}{\sigma}\|A^\frac{5-2\sigma}{2}u\|^\frac{\sigma-1}{\sigma}\\
&\leq &C_\delta|\lambda|^{-\frac{1}{\sigma}}\{\|F\|_{\mathcal{H}_0}\|U\|_{\mathcal{H}_0} \}^\frac{1}{\sigma}\{\|F\|_{\mathcal{H}_0}\|U\|_{\mathcal{H}_0}\}^\frac{\sigma-1}{\sigma}.
\end{eqnarray*}
Where do we conclude the proof of \eqref{Eq003GevreyT}$_1$.\\
On the other hand, as for $1< \sigma < \frac{3}{2}$, we have $0\in[\frac{1-\sigma}{2},\frac{1}{2}]$. We are going to use an interpolation inequality. Since
\begin{equation*}
0=\phi\bigg(\dfrac{1-\sigma}{2}\bigg)+(1-\phi)\dfrac{1}{2},\quad{\rm for}\quad \phi=\frac{1}{\sigma}\quad{\rm and}\quad 1-\phi=\frac{\sigma-1}{\sigma}, 
\end{equation*}
using inequalities \eqref{Pdis0-10} and item $(iii)$ of Lemma \ref{Lemma001Gevrey},  we get that
\begin{eqnarray*}
\|\theta \|^2 &\leq & C\|A^\frac{1-\sigma}{2} \theta\|^\frac{1}{\sigma}\|A^\frac{1}{2}\theta \|^\frac{\sigma-1}{\sigma}\\
&\leq &C_\delta|\lambda|^{-\frac{1}{\sigma}}\{\|F\|_{\mathcal{H}_0}\|U\|_{\mathcal{H}_0} \}^\frac{1}{\sigma}\{\|F\|_{\mathcal{H}_0}\|U\|_{\mathcal{H}_0}\}^\frac{\sigma-1}{\sigma}.
\end{eqnarray*}
Where do we conclude the proof of \eqref{Eq003GevreyT}$_2$.

Adding the two estimates of  \eqref{Eq003GevreyT}, we arrive
\begin{equation}\label{Eq001GevreyT3D}
|\lambda|\big[ \|u\|^2_2+\|\theta\|^2 \big]\leq  |\lambda|^\frac{\sigma-1}{\sigma} C_\delta\|F\|_{\mathcal{H}_0}\|U\|_{\mathcal{H}_0}\quad{\rm for }\quad 1<\sigma<\dfrac{3}{2}.
\end{equation}

Applying \eqref{Eq001GevreyT3D} in estimative \eqref{ItemiLemma07}( item (i)$_1$ Lemma \ref{Lemma002Exp0}), we have
    \begin{equation}\label{Eq002GevreyT3}
    |\lambda|\|U\|_{\mathcal{H}_0}^2 \leq|\lambda|^\frac{\sigma-1}{\sigma}  C_\delta\|F\|_{\mathcal{H}_0}\|U\|_{\mathcal{H}_0}\quad{\rm for }\quad 1 <\sigma < \dfrac{3}{2}.
\end{equation}    
Where did we finish to prove this is case $(ii)$.

 {\bf Proof case(iii): $ \omega>0$ and $1<\sigma<\frac{5}{4}$:}\\
 
 Next we are going to prove that for $1<\sigma<\frac{5}{4}$, it is verified: 
\begin{equation}\label{EqPGU+0}
|\lambda|^{4(\sigma-1)}\|u\|^2_2\leq C_\delta\|F\|_{\mathcal{H}_\omega}\|U\|_{\mathcal{H}_\omega}.
\end{equation}
 As for $1< \sigma < \frac{5}{4}$, we have $1\in[\frac{4\sigma-1}{4},\sigma]$. We are going to use an interpolation inequality. Since
\begin{equation*}
1=\phi\bigg(\dfrac{4\sigma-1}{4}\bigg )+(1-\phi)\sigma,\quad{\rm for}\quad \phi=4(\sigma-1)\quad{\rm and}\quad 1-\phi=5-4\sigma, 
\end{equation*}
using the item $(v)$ of the  Lemma \ref{Lemma002Exp} and  Lemma \ref{Lemma002Gevrey},  we get that
\begin{eqnarray*}
\|u\|^2_2 &\leq & C\|A^\frac{4\sigma-1}{4} u\|^{4(\sigma-1)}\|A^\sigma u\|^{5-4\sigma}\\
&\leq &C_\delta|\lambda|^{-[4(\sigma-1)]}\{\|F\|_{\mathcal{H}_\omega}\|U\|_{\mathcal{H}_\omega} \}^{4(\sigma-1)}\{\|F\|_{\mathcal{H}_\omega}\|U\|_{\mathcal{H}_0}\}^{5-4\sigma}.
\end{eqnarray*}
Where do we conclude the proof of \eqref{EqPGU+0}.   Then
 \begin{equation}\label{Eq003Gevrey}
 |\lambda|\|u\|^2_2 \leq C_\delta|\lambda|^{5-4\sigma} \|F\|_{\mathcal{H}_\omega}\|U\|_{\mathcal{H}_\omega}\quad{\rm for }\quad 1 < \sigma <  \frac{5}{4}.
 \end{equation}
 From estimative \eqref{ItemiLemma08}, we have
 \begin{equation}\label{Eq116AnalyR}
 |\lambda|\|\theta\|^2\leq C_\delta  \|F\|_{\mathcal{H}_\omega}\|U\|_{\mathcal{H}_\omega}\quad{\rm for }\quad 0 \leq\sigma \leq  \frac{3}{2}.
 \end{equation}
 
 Adding the inequalities \eqref{Eq003Gevrey} and \eqref{Eq116AnalyR}, we obtain
\begin{equation}\label{Eq117AnalyR}
|\lambda|\big[ \|u\|^2_2+\|\theta\|^2 \big]\leq C_\delta |\lambda|^{5-4\sigma}\|F\|_{\mathcal{H}_\omega}\|U\|_{\mathcal{H}_\omega}\qquad{\rm for}\qquad   \dfrac{5}{4} <\sigma < \dfrac{5}{4}.
\end{equation}
Finally,   using  estimative \eqref{Eq117AnalyR} in  \eqref{ItemiiiLemma08} (Item $(iii)$ of Lemma\eqref{Lemma002Exp}),  we finish the proof of item $(iii)$ this theorem.
  
\end{proof}

\begin{rem}[Gevrey Sharp Class ]
The  Gevrey classes determined above are Sharp, by the meaning of Sharp given by the theorem to follow:
\begin{theorem}\label{AnaliticidadePlaca-20}
The functions $\phi_{01}(\sigma)=2\sigma-1$ for $\sigma\in(1/2,  1)$,  $\phi_{02}(\sigma)=\frac{1}{\sigma}$ for $\sigma\in (1,\frac{3}{2})$ and $\phi_\omega(\sigma)=4(\sigma-1)$ for $ \sigma\in (1,5/4)$ that determine the Gevrey classes of the semigroups $S_{01}(t)$, $S_{02}(t)$ and $S_\omega(t)$ respectively are Sharp, in the sense:  If
\begin{equation}\label{Eq01Sharp}
\left\{\begin{array}{c}
\Phi_{01}:=\phi_0+\delta_0=2\sigma-1+\delta_{01}\quad {\rm for\;  all} \quad \delta_{01}>0 \quad {\rm such \; that}\\
2\sigma-1+\delta_{01}<1\quad{\rm and} \quad \dfrac{1}{2}<\sigma < 1\\
or\\
\Phi_{02}:=\phi_0+\delta_0=\dfrac{1}{\sigma}+\delta_{01}\quad {\rm for\;  all} \quad \delta_{02}>0 \quad {\rm such \; that}\\
2\sigma_1+\delta_{02}<1\quad{\rm and} \quad 1<\sigma < \dfrac{3}{2}\\
or\\
\Phi_\omega:=\phi_\omega+\delta_\omega=4(\sigma-1)+\delta_\omega \quad {\rm for\;  all} \quad \delta_\omega>0 \quad {\rm such \; that}\\
4(\sigma-1)+\delta_\omega<1 \quad{\rm and} \quad 1<\sigma <  \dfrac{5}{4}.
\end{array}\right.
\end{equation}
then
\begin{equation}\label{Eq02Sharp}
\left\{\begin{array}{c}
s_{01}>\dfrac{1}{\Phi_{01}}\qquad {\rm for}  \qquad \dfrac{1}{2}<\sigma <1\\
or \\
s_{02}>\dfrac{1}{\Phi_{02}}\qquad {\rm for}  \qquad 1<\sigma <\dfrac{3}{2}\\
or \\
s_\omega>\dfrac{1}{\Phi_\omega}\qquad {\rm for}  \qquad 1<\sigma < \dfrac{5}{4}.
\end{array}\right.
\end{equation}
They are not Gevrey classes of the semigroups $S_{01}(t)$,  $S_{02}(t)$ or  $S_\omega(t)$ respectively.
\end{theorem}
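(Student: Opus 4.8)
\textbf{Strategy.} The plan is to prove the three sharpness claims by a spectral (counterexample) argument, using the characterization of Gevrey semigroups behind Theorem \ref{Theorem1.2Tebon}. Suppose, towards a contradiction, that $S_\omega(t)$ were of Gevrey class $s$ for every $s>\frac{1}{\Phi}$, where $\Phi=\phi+\delta$ with $\phi\in\{\phi_{01},\phi_{02},\phi_\omega\}$ and $0<\phi<\Phi<1$. Since $i\mathbb{R}\subset\rho(\mathbb{A}_\omega)$ by Lemma \ref{LemmaEixoIm}, the converse half of the Gevrey characterization in \cite{TaylorM} (the region/resolvent statement: a Gevrey-$s$ semigroup has $\rho(\mathbb{A})$ containing a region $\{\mathrm{Re}\,\lambda>-c(|\mathrm{Im}\,\lambda|^{1/s}+1)\}$ with the resolvent bounded there by $C/\mathrm{dist}$) would force
\[
\limsup_{|\lambda|\to\infty}|\lambda|^{\Phi}\,\|(i\lambda I-\mathbb{A}_\omega)^{-1}\|_{\mathcal{L}(\mathcal{H}_\omega)}<\infty ,
\]
equivalently $|\lambda|^{\Phi}\|U\|_{\mathcal{H}_\omega}^{2}\le C_\delta\|F\|_{\mathcal{H}_\omega}\|U\|_{\mathcal{H}_\omega}$ for every solution of $(i\lambda I-\mathbb{A}_\omega)U=F$ with $|\lambda|$ large. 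Hence it suffices to exhibit, in each of the three parameter ranges, a sequence $\lambda_n\to\infty$ and data $F_n$ whose associated solution $U_n$ satisfies $\|U_n\|_{\mathcal{H}_\omega}/\|F_n\|_{\mathcal{H}_\omega}\gtrsim|\lambda_n|^{-\phi}$: then $|\lambda_n|^{\Phi}\|U_n\|_{\mathcal{H}_\omega}/\|F_n\|_{\mathcal{H}_\omega}\gtrsim|\lambda_n|^{\Phi-\phi}=|\lambda_n|^{\delta}\to\infty$, contradicting the displayed bound. This shows $S_\omega(t)$ cannot be of Gevrey class $s$ for $s$ arbitrarily close to (and below) $\frac{1}{\phi}$, i.e.\ the exponents $\phi_{01},\phi_{02},\phi_\omega$ cannot be enlarged.

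\textbf{The counterexample sequences.} For $\omega=0$ and $\frac12<\sigma<1$ I would reuse the sequence built in the proof of Theorem \ref{AnaliticidadePlaca-200}: with $e_n$ the unit eigenvectors of $A$, $F_n=(0,-\tfrac{e_n}{2},\tfrac{e_n}{2})$ and $\lambda_n^2=\eta_n^2$, the explicit scalars satisfy $|\mu_n|\approx|\lambda_n|^{-2\sigma}$ by \eqref{MUZero}, hence $\|U_n\|_{\mathcal{H}_0}\ge K_0\|v_n\|=K_0|\lambda_n||\mu_n|\approx|\lambda_n|^{1-2\sigma}=|\lambda_n|^{-\phi_{01}}$ by \eqref{LackExponential0}, while $\|F_n\|_{\mathcal{H}_0}$ is constant; this is exactly the required rate. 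For $\omega>0$ and $1<\sigma<\tfrac54$ I would likewise reuse the sequence of Theorem \ref{AnaliticidadePlaca-201}: $F_n=(0,-e_n,0)$ (with $\|e_n\|_{\mathbf{H}^1}=1$) and $\lambda_n^2=\eta_n^2/(1+\omega\eta_n)$, which gives $\|U_n\|_{\mathcal{H}_\omega}\ge K_\omega\|v_n\|_{\mathbf{H}^1}=K_\omega|\lambda_n||\mu_n|\approx|\lambda_n|^{4-4\sigma}=|\lambda_n|^{-\phi_\omega}$ against a constant $\|F_n\|_{\mathcal{H}_\omega}$. For $\omega=0$ and $1<\sigma<\tfrac32$ the naive resonant choice $\lambda_n^2=\eta_n^2$ only yields the faster decay $\|U_n\|/\|F_n\|\approx|\lambda_n|^{-\sigma}$, which is insufficient; here I would instead tune $\lambda_n$ so as to make the characteristic determinant $D_n=(\lambda_n^2-\eta_n^2)(i\lambda_n+\eta_n)-i\lambda_n\eta_n^{2\sigma}$ as small as possible — taking $\lambda_n^2\approx\eta_n^{2\sigma}$ together with lower-order corrections chosen to cancel the successive leading terms of $D_n$, and a suitably normalized $F_n$ supported on $e_n$ — and then read off $\|U_n\|_{\mathcal{H}_0}/\|F_n\|_{\mathcal{H}_0}\gtrsim|\lambda_n|^{-1/\sigma}=|\lambda_n|^{-\phi_{02}}$ from the resolved $2\times2$ system via Cramer's rule, using the interpolation Theorem \ref{Lions-Landau-Kolmogorov} to dominate the mixed norms that appear in $\|U_n\|_{\mathcal{H}_0}$.

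\textbf{Main obstacle.} The cases $(i)$ and $(iii)$ are essentially bookkeeping on sequences already in hand. The delicate case is $(ii)$: one must locate a frequency $\lambda_n$ — scaling like a fractional power of $\eta_n$ rather than simply like $\eta_n$ — for which the resolvent on the $n$-th spectral block reaches size $|\lambda_n|^{-1/\sigma}$, which amounts to saturating the interpolation inequality underlying Lemma \ref{Lemma001Gevrey}$(iv)$ and Lemma \ref{Lemma002Exp0}$(vii)$; checking the exact scaling and verifying that no component of $\|U_n\|_{\mathcal{H}_0}$ decays slower (which would give the wrong, i.e.\ weaker, contradiction) is the crux. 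A secondary but routine point is to state precisely the converse half of the Gevrey characterization from \cite{TaylorM} that converts ``not every $s>\tfrac{1}{\Phi}$ is admissible'' into the failure of the resolvent estimate at exponent $\Phi$; with that in place the contradiction from the previous paragraph closes the argument for all three ranges simultaneously.
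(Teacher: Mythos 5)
Your cases $(i)$ and $(iii)$ coincide with the paper's argument: both reuse the eigenvector sequences of Theorems \ref{AnaliticidadePlaca-200} and \ref{AnaliticidadePlaca-201}, and since \eqref{LackExponential0}$(i)$ and \eqref{LackExponential} give $\|U_n\|_{\mathcal{H}_0}\gtrsim|\lambda_n|^{1-2\sigma}$ and $\|U_n\|_{\mathcal{H}_\omega}\gtrsim|\lambda_n|^{4-4\sigma}$ against constant data, the exponents $\Phi_{01}=2\sigma-1+\delta_{01}$ and $\Phi_\omega=4(\sigma-1)+\delta_\omega$ indeed yield $|\lambda_n|^{\Phi}\|U_n\|\gtrsim|\lambda_n|^{\delta}\to\infty$, so \eqref{Eq1.6Tebon2020} fails there. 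Your diagnosis of case $(ii)$ is the genuinely valuable part of the proposal: for $1<\sigma<\frac{3}{2}$ the resonant choice $\lambda_n^2=\eta_n^2$ only gives $\|U_n\|_{\mathcal{H}_0}\gtrsim|\lambda_n|^{-\sigma}$ by \eqref{LackExponential0}$(ii)$, and since $\sigma>1>\Phi_{02}$ the quantity $|\lambda_n|^{\Phi_{02}-\sigma}$ tends to zero, not infinity. The paper's own proof overlooks exactly this point: the displayed chain $|\lambda_n|^{\frac{1}{\sigma}+\delta_{02}}\|U_n\|_{\mathcal{H}_0}\geq K_{02}|\lambda_n|^{\delta_{02}}$ silently uses $\|U_n\|_{\mathcal{H}_0}\gtrsim|\lambda_n|^{-1/\sigma}$, which is strictly stronger than what \eqref{LackExponential0}$(ii)$ provides. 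So on this point your proposal is more careful than the paper.

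The gap that remains on your side is that case $(ii)$ is only a sketch. It should be carried out, and it does close: with $F_n=(0,e_n,0)$ and the choice $\lambda_n^2=\eta_n^2+\eta_n^{2\sigma}$ (which annihilates the imaginary part of the block determinant $D_n=(\lambda_n^2-\eta_n^2)(i\lambda_n+\eta_n)-i\lambda_n\eta_n^{2\sigma}$), one gets $|D_n|=\eta_n^{2\sigma+1}$, $\lambda_n\approx\eta_n^{\sigma}$, hence $|\mu_n|=|i\lambda_n+\eta_n|/|D_n|\approx|\lambda_n|^{-1-\frac{1}{\sigma}}$ and $\|v_n\|=|\lambda_n||\mu_n|\approx|\lambda_n|^{-\frac{1}{\sigma}}$, while $\|u_n\|_2=\eta_n|\mu_n|\approx|\lambda_n|^{-1}$ and $\|\theta_n\|\approx|\lambda_n|^{-\frac{1}{\sigma}}$ are no larger; this is exactly the rate $|\lambda_n|^{-\phi_{02}}$ you need, and it must be checked (as you note) that no component is larger, since that would contradict Theorem \ref{TGevrey}$(ii)$ rather than prove its sharpness. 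A second, smaller issue common to you and the paper: Theorem \ref{Theorem1.2Tebon} is only a sufficient condition for Gevrey regularity, so blowing up $|\lambda_n|^{\Phi}\|(i\lambda_n I-\mathbb{A}_\omega)^{-1}\|$ shows that the resolvent criterion fails at exponent $\Phi$, not by itself that $S_\omega(t)$ fails to be of class $1/\Phi$; you correctly flag that a converse resolvent characterization from \cite{TaylorM} must be quoted to convert one statement into the other, whereas the paper's statement of sharpness is implicitly only about the failure of condition \eqref{Eq1.6Tebon2020}. With the case $(ii)$ computation written out and the converse characterization stated precisely, your argument is complete and strictly repairs the paper's.
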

 \begin{proof}
 We will use the results obtained in the  Theorem\eqref{TGevrey} and the estimates determined in the equations \eqref{LackExponential0}  for $\omega=0$ and \eqref{LackExponential}  for $\omega>0$,  to prove this theorem.  i.e, from the estimates \eqref{LackExponential0}  and \eqref{LackExponential},  we have
 \begin{equation*}
 \left\{ \begin{array}{c}
 |\lambda_n|^{\Phi_{01}}\|U_n\|_{\mathcal{H}_0}= K_{01}|\lambda_n|^{2\sigma-1+\delta_{01}}\|U_n\|_{\mathcal{H}_0}\geq K_{01}|\lambda_n|^{\delta_{01}}\to \infty,  
   \\
  \;  {\rm when} \; |\lambda_n| \to \infty,\\
   |\lambda_n|^{\Phi_{02}}\|U_n\|_{\mathcal{H}_0}= K_{02}|\lambda_n|^{\frac{1}{\sigma}+\delta_{02}}\|U_n\|_{\mathcal{H}_0}\geq K_{02}|\lambda_n|^{\delta_{02}}\to \infty,  
   \\
  \;  {\rm when} \; |\lambda_n| \to \infty, 
   \qquad {\rm and}\\
  |\lambda_n|^{\Phi_\omega}\|U_n\|_{\mathcal{H}_\omega}= K_\omega|\lambda_n|^{4(2\sigma-1)+\delta_\omega}\|U_n\|_{\mathcal{H}_\omega}\geq K_\omega|\lambda_n|^{\delta_\omega}\to \infty,  
 \\  {\rm when} \;  |\lambda_n| \to \infty.
 \end{array}\right.
 \end{equation*}
 Therefore  $\Phi_{01}$, $\Phi_{02}$ and $\Phi_\omega$ it does not verify the \eqref{Eq1.6Tebon2020} condition for $\omega\geq 0$   of the  Theorem\eqref{TGevrey} concerning class Gevrey.
 
Then the Gevrey classe  $s_{01}>\frac{1}{2\sigma-1}$,  $s_{02}>\sigma$ and $s_\omega>\frac{1}{4(\sigma-1)}$  they semigrupos  $S_0(t)$ and $S_\omega(t)$ respectively are Sharp. 
\end{proof}

We emphasize that in the tree since the determined Gevrey class is sharp, we conclude that for this case the semigroups $S_w(t)=e^{t\mathbf{A}_w}$ is also not analytic.
\end{rem}

\begin{rem}[Asymptotic Behavior] 
A semigroup $S(t)$ of class Gevrey has more regular properties than a differentiable semigroup but is less regular than an analytic semigroup. It should be noted that the Gevrey class or the analyticity of the particular model implies three important properties. The first is the property of the smoothing effect on the initial data, that is, no matter how irregular the initial data is, the model solutions are very smooth in positive time. The second property is that systems are exponentially stable.  And the third is that the systems associated with the semigroup enjoy the property of linear stability, which means that the type of the semigroup is equal to the spectral limit of its infinitesimal operator. Specifically speaking of this investigation system  \eqref{Eq1.1}.  The associated semigroups  $S_\omega(t)=e^{t\mathbb{A}_\omega}$ for $\omega\geq 0$,  are exponentially stable. The proof is a consequence of Lemmas \eqref{Lemma001Exp} and \eqref{LemmaEixoIm}: In the case $\omega=0$; $S_0(t)$ is exponentially stable for $\sigma\in [1/2,3/2]$ and and for the case $\omega>0$; $S_w(t)$ is exponentially stable for $\sigma\in [1,3/2]$.
\end{rem}

\end{document}